\documentclass[10pt]{article}

\usepackage{cmap}

\usepackage[T1]{fontenc}
\usepackage{lmodern}
\usepackage[a4paper,margin=2cm]{geometry}
\usepackage{amsmath,amssymb,amsthm,mathrsfs}
\usepackage{microtype}
\usepackage{enumitem}
\usepackage[hidelinks]{hyperref}
\usepackage[nameinlink,noabbrev]{cleveref}

\allowdisplaybreaks
\numberwithin{equation}{section}

\newtheorem{theorem}{Theorem}[section]
\newtheorem{proposition}[theorem]{Proposition}
\newtheorem{lemma}[theorem]{Lemma}
\newtheorem{corollary}[theorem]{Corollary}
\theoremstyle{definition}
\newtheorem{definition}[theorem]{Definition}
\theoremstyle{remark}
\newtheorem{remark}[theorem]{Remark}

\Crefname{theorem}{Theorem}{Theorems}
\Crefname{proposition}{Proposition}{Propositions}
\Crefname{lemma}{Lemma}{Lemmas}
\Crefname{corollary}{Corollary}{Corollaries}
\Crefname{definition}{Definition}{Definitions}
\Crefname{remark}{Remark}{Remarks}

\newcommand{\R}{\mathbb{R}}
\newcommand{\Id}{\mathrm{Id}}
\newcommand{\Lip}{\operatorname{Lip}}
\newcommand{\supp}{\operatorname{supp}}
\newcommand{\diam}{\operatorname{diam}}
\newcommand{\conv}{\operatorname{conv}}
\newcommand{\tr}{\operatorname{tr}}
\newcommand{\Cov}{\operatorname{Cov}}
\newcommand{\ran}{\operatorname{ran}}
\newcommand{\Ker}{\operatorname{Ker}}
\newcommand{\bigO}{\mathcal{O}}
\newcommand{\essran}{\operatorname*{ess\,ran}}
\newcommand{\e}{\mathrm{e}}
\newcommand{\dd}{\,\mathrm{d}}
\newcommand{\cA}{\mathcal{A}}
\newcommand{\cH}{\mathcal{H}}
\newcommand{\cK}{\mathcal{K}}
\newcommand{\cL}{\mathcal{L}}
\newcommand{\cS}{\mathcal{S}}
\newcommand{\cC}{\mathfrak{C}}
\newcommand{\cM}{\mathfrak{M}}
\newcommand{\ip}[2]{\left\langle #1,#2\right\rangle}
\newcommand{\norm}[1]{\left\lvert #1\right\rvert}
\newcommand{\opnorm}[1]{\left\lVert #1\right\rVert_{\mathrm{op}}}

\title{Caffarelli Estimates under Lipschitz Perturbations}
\author{Maja Gw\'o\'zd\'z\\
ETH Z\"urich\\
\href{mailto:mgwozdz@ethz.ch}{\texttt{mgwozdz@ethz.ch}}}
\date{}

\begin{document}

\frenchspacing

\maketitle

\begin{abstract}
We study dimension-free differential estimates for Brenier maps under first-order perturbations of the two marginals. Caffarelli's contraction theorem yields such estimates when the source and target potentials satisfy a pointwise Hessian comparison. We show that the same conclusion remains true under arbitrary globally Lipschitz perturbations of both marginals, without additional assumptions on convexity, semiconvexity, or smallness on the perturbations. In particular, this answers the conjecture due to Fathi, Mikulincer, and Shenfeld. 

More precisely, let \(d\ge1\), \(L\ge0\), and let \(B:\R^d\to\R\) be globally \(L\)-Lipschitz. We prove that the Brenier map from the standard Gaussian measure \(\gamma_d\) to the probability measure proportional to \(\e^{-B}\gamma_d\) has a globally Lipschitz representative whose bound depends only on \(L\). The Brenier potential also belongs to \(C^{1,1}(\R^d)\) and satisfies dimension-free two-sided Hessian bounds. Let \(\cC(0,L)\) denote the upper-bound constant, then
\[
\log\cC(0,L)=4L^2+\log L+\bigO(1)
\qquad(L\to\infty),
\]
and a one-dimensional example with Brenier map \(T_L\) satisfies \(\log\Lip(T_L)\ge L^2/2\). This shows that the quadratic order is optimal.

We deduce this Gaussian estimate from an anisotropic two-marginal result. Let
\(V,W:\R^d\to\R\) be potentials, and let \(Q,P\) be positive-definite matrices. Using the distributional curvature bounds \(D^2V\preceq Q\) and \(D^2W\succeq P\), we establish matrix Hessian bounds for Brenier maps between arbitrary globally Lipschitz perturbations of the reference marginals. Most importantly, the constants keep the directional geometry of the essential gradient ranges. For affine perturbations, our estimates recover the sharp noncommuting Caffarelli tensor. We further obtain pointwise displacement bounds for Gaussian perturbations and spectral Hessian estimates for compact Gaussian mixtures.
\end{abstract}

\begingroup\small
\par\noindent\textit{2020 Mathematics Subject Classification.} Primary 49Q22,
Secondary 35J96, 60E15, 26B25\par
\noindent\textit{Keywords.}\ Brenier map, Caffarelli contraction, log-Lipschitz perturbation, spectral Hessian bound, Gaussian mixture\par
\endgroup\medskip

\section{Introduction}\label{sec:introduction}

Let us fix an integer \(d\ge1\). We write \(\norm{\cdot}\) for the Euclidean norm, \(\Id\) for the identity matrix and the identity map, and \(\preceq\) for the Loewner order. The standard Gaussian probability measure on \(\R^d\) is
\[
\dd\gamma_d(x)
:=(2\pi)^{-d/2}\e^{-\norm{x}^2/2}\dd x.
\]
For maps between Euclidean spaces, we use \(\Lip(\,\cdot\,)\) for the Euclidean
Lipschitz constant.

Caffarelli's contraction theorem provides a global Lipschitz estimate for quadratic optimal transport. In scalar form, if \(D^2V\preceq\Lambda\Id\) and \(D^2W\succeq\kappa\Id\), then the quadratic-cost Brenier map from \(\e^{-V}\dd x\) to \(\e^{-W}\dd x\), after normalisation and under the finite-moment assumptions, is \(\sqrt{\Lambda/\kappa}\)-Lipschitz. Caffarelli's original normalised theorem transports a Gaussian measure to a more log-concave target \cite[Theorem 11]{Caffarelli} (see also \cite{CaffarelliErratum} and \cite[Theorem 5.5]{GS} for the matrix Hessian comparison). We ask whether dimension-free Brenier-map estimates still hold under perturbations that are controlled only at first order.

\subsection{Gaussian conjecture}
\label{subsec:gaussian-conjecture}

Fathi, Mikulincer, and Shenfeld \cite[Conjecture 1]{FMS} conjectured that a dimension-free Lipschitz estimate should hold for the quadratic-cost Brenier map under Gaussian
log-Lipschitz perturbations. More precisely, for every \(L\ge0\), they ask for a constant \(C(L)<\infty\), independent of \(d\), with the following property. Let \(B:\R^d\to\R\) satisfy \(\Lip(B)\le L\), set \(Z_B:=\int_{\R^d}\e^{-B}\dd\gamma_d\), and let \(T\) be the Brenier map from \(\gamma_d\) to \(Z_B^{-1}\e^{-B}\gamma_d\). The question is whether
\[
\Lip(T)\le C(L)
\]
holds. Notice that Caffarelli's theorem does not apply directly, because a globally
Lipschitz function \(B\) need not be semiconvex.

For \(\rho>0\) and \(\ell_a,\ell_b\ge0\), we define
\begin{equation}\label{eq:def-Crho}
\cC_\rho(\ell_a,\ell_b)
:=\inf_{0<\beta<1}
\frac1\beta
\exp\!\left(
\frac{4(\ell_b+\beta\ell_a)^2}{\rho(1-\beta^2)}
\right),
\end{equation}
and write
\begin{equation}\label{eq:def-CAB}
\cC(\ell_a,\ell_b):=\cC_1(\ell_a,\ell_b),
\qquad
\cC_\rho(0,0):=1.
\end{equation}

\begin{theorem}[Gaussian log-Lipschitz Brenier estimate]
\label{thm:gaussian-source}
Let \(L\ge0\), let \(B:\R^d\to\R\) be globally \(L\)-Lipschitz, and set
\[
Z_B:=\int_{\R^d}\e^{-B}\dd\gamma_d,
\qquad
\dd\nu_B=Z_B^{-1}\e^{-B}\dd\gamma_d.
\]
Let \(T=\nabla\Phi\) be the Brenier map from \(\gamma_d\) to \(\nu_B\). It follows that \(\Phi\) may be chosen in \(C^{1,1}(\R^d)\), that \(T\) has a globally defined Lipschitz representative, and that
\begin{equation}\label{eq:gaussian-source-Hessian-intro}
\cC(L,0)^{-1}\Id\preceq D^2\Phi\preceq\cC(0,L)\Id
\end{equation}
in the sense of matrix-valued distributions and almost everywhere. In particular, \(\Lip(T)\le\cC(0,L)\), and
\[
\log\cC(0,L)
=
4L^2+\log L+\bigO(1)
\qquad(L\to\infty).
\]
\end{theorem}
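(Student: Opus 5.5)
Since this theorem is stated before the anisotropic two-marginal result announced in the abstract, I will prove it directly for the Gaussian case with \(\rho=1\), \(\ell_a=0\), \(\ell_b=L\). The approach is to regularise, run a maximum-principle argument on the Monge--Ampère equation, and pass to the limit.

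\textbf{Reduction to smooth data.} First replace \(B\) by a mollified \(B_\varepsilon=B*\eta_\varepsilon\). This is still \(L\)-Lipschitz, is smooth with bounded derivatives of every order, and converges to \(B\) locally uniformly. Also truncate the Gaussian tails if needed, so that Caffarelli's regularity theory gives a smooth, uniformly convex potential \(\Phi_\varepsilon\) solving
\[
\tfrac12|x|^2-\log\det D^2\Phi_\varepsilon(x)=\tfrac12|\nabla\Phi_\varepsilon(x)|^2+B_\varepsilon(\nabla\Phi_\varepsilon(x))+\log Z_{B_\varepsilon}.
\]
Once bounds uniform in \(\varepsilon\) are proved, stability of Brenier maps under weak convergence lets me pass to the limit. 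The limits of \(\Phi_\varepsilon\) inherit the two-sided distributional Hessian bounds. This yields \(\Phi\in C^{1,1}\) and a globally Lipschitz representative of \(T\).

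\textbf{Upper bound via a perturbed maximum principle.} The naive maximum-principle approach differentiates the equation twice along a direction \(e\). It then needs \(D^2B\succeq -c\), which is unavailable because \(B\) is not semiconvex. Instead I introduce a weighted quantity
\[
F(x)=\beta\,\partial_{ee}\Phi(x)\,\exp\bigl(-g(x,\nabla\Phi(x))\bigr),
\]
with an auxiliary parameter \(0<\beta<1\) and a first-order correction \(g\). A natural choice is the displacement-interpolation trick: compare \(\Phi\) with \(\beta\Phi+(1-\beta)\tfrac12|x|^2\), equivalently \(g=\) const\(\cdot B(\nabla\Phi)\). The aim is that the dangerous term \(D^2B(\nabla\Phi)[D^2\Phi e,D^2\Phi e]\) either cancels or is absorbed by the \(-\tfrac12|x|^2\) strong convexity. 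The gain from the \(|\nabla\Phi|^2\) term then has to dominate the gradient cross terms \(\nabla B\cdot(\ldots)\), which are only of size \(L\).

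Concretely, at a maximum point of \(F\) I have \(\nabla\log F=0\) and \(L_\Phi\log F\le0\), where \(L_\Phi=\tr\bigl((D^2\Phi)^{-1}D^2\bigr)\) is the linearised operator. These conditions produce a quadratic inequality of the form
\[
(1-\beta^2)\,\log^{+}(\beta F)\le 4L^2,
\]
up to checking constants, via Cauchy--Schwarz applied to cross terms \(2L|\cdot|\le\delta|\cdot|^2+L^2/\delta\) with \(\delta\) tied to \(1-\beta^2\). The resulting bound is \(\partial_{ee}\Phi\le\beta^{-1}\exp\bigl(4L^2/(1-\beta^2)\bigr)\). Optimising over \(\beta\) gives \(\cC(0,L)\), since \(\ell_a=0\).

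Achieving a maximum requires a localisation or penalisation argument. One option is an added term \(-\epsilon|x|^2\); the other is working on truncated data with bounded Hessians, where Caffarelli's classical bound already gives finiteness.

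\textbf{Lower bound by symmetry.} The lower bound follows from the same argument applied to the inverse map \(\nabla\Phi^*\) from \(\nu_B\) to \(\gamma_d\). There the Lipschitz perturbation sits on the source, which corresponds to \(\ell_a=L\), \(\ell_b=0\). This gives \(D^2\Phi^*\preceq\cC(L,0)\Id\), hence \(D^2\Phi\succeq\cC(L,0)^{-1}\Id\).

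\textbf{Asymptotics.} With \(\ell_a=0\) the quantity to minimise is \(-\log\beta+4L^2/(1-\beta^2)\). Setting \(\beta=1-s\), the two terms behave like \(s+2L^2/s+\ldots\), which would give order \(L\), not \(L^2\). So I recheck the expansion: the minimiser is actually near small \(\beta\), where the quantity is \(\approx-\log\beta+4L^2(1+\beta^2)\). This is minimised at \(\beta\approx1/(\sqrt8L)\), giving \(4L^2+\log L+\bigO(1)\), as claimed.

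\textbf{Main obstacle.} The crux is the maximum-principle computation. One must design \(g\) so that no second derivative of \(B\) survives: mollification gives no uniform control of \(D^2B\), so such terms must cancel exactly or appear as a total derivative handled by the first-order condition at the maximum. Everything else is routine approximation.
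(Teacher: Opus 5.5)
Your outline has a genuine gap. It sits exactly at the step you call the crux: for \(d\ge2\), a weighted pure second derivative cannot eliminate \(D^2B\).

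Write \(A=D^2\Phi\). Differentiating the Monge--Amp\`ere equation twice along \(e\) puts \(\ip{D^2B(\nabla\Phi)Ae}{Ae}\) into \(\tr(A^{-1}D^2\partial_{ee}\Phi)\). So \(L_\Phi\log\partial_{ee}\Phi\) contains the rank-one form \(\ip{D^2B(\nabla\Phi)Ae}{Ae}/\ip{Ae}{e}\). For your correction \(g=c\,B(\nabla\Phi)\),
\[
D^2[B(\nabla\Phi)]=A\,D^2B(\nabla\Phi)\,A+\sum_k\partial_kB(\nabla\Phi)\,D^2\partial_k\Phi ,
\]
so \(L_\Phi g\) contributes \(c\tr(D^2B(\nabla\Phi)A)\) plus first-order terms. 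These cancel for every \(B\) only if \(Ae\otimes Ae=c\ip{Ae}{e}A\), which forces \(d=1\). In \(d=1\) the weight merely reproduces the first-order identity \(\log\Phi''-B(\Phi')=\frac12(\Phi'^2-x^2)+\mathrm{const}\).

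The first-order condition \(\nabla\log F=0\) involves only \(\nabla B\), so it cannot absorb the leftover term. Meanwhile \(D^2B_\varepsilon\) has no sign and is of size \(L/\varepsilon\). Consequently the inequality \((1-\beta^2)\log^{+}(\beta F)\le4L^2\) is read off from the target constant, not derived.

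The lower bound inherits the problem in a worse form. For the reverse map the perturbation sits on the source, and differentiating twice produces \(\partial_{ee}B(y)\) directly.

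The localisation is also unresolved. Penalising by \(-\epsilon\norm{x}^2\) presupposes growth control of \(\partial_{ee}\Phi\) at infinity, which you do not have. Classical Caffarelli does not help on truncated data either, since \(\frac12\norm{y}^2+B_\varepsilon\) is not convex once \(\varepsilon\) is small compared with \(L\).

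The paper never differentiates the perturbation twice. It replaces \(\partial_{ee}\Phi\) by the translated corrector \(\mathfrak B_m(x)=\Phi(x+m)-\Phi(x)-\ip{m}{T(x)}\). The linearisation of this corrector (\Cref{lem:translated-corrector-general}) subtracts the Monge--Amp\`ere equation at \(x\) and at \(x+m\). Thus \(B\) enters only through the Bregman remainder \(D_B(T(x+m),T(x))\ge-\omega_B(\delta_m)\ge-2L\norm{\delta_m}\).

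One then maximises \(\mathfrak B_m(x)-\Theta(m)-\varepsilon\Phi(x)\) jointly in \((x,m)\). The off-diagonal block \(C-A\) of the joint Hessian is handled by the generalised Schur complement and the log-determinant--Schur envelope \(\Xi_+\) (\Cref{lem:schur,lem:matrix-coercivity}). The radial penalty \(\Theta\) is built so that the resulting scalar inequality is contradictory (\Cref{lem:spectral-penalty-certificate}).

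Your \(\beta\) appears there as the slope constraint \(\norm m\le\beta\,p(\norm m)\) encoded in \(\Theta\). This yields \(\frac12\norm{\delta_m}^2-\frac12\norm m^2\ge\frac12(1-\beta^2)\norm{\delta_m}^2\), which beats the width term \(2L\norm{\delta_m}\). The Hessian bound then follows from \(\mathfrak B_{th}(x)\le\Theta(th)\) as \(t\to0\).

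Existence of the maximiser on \(\R^d\) rests on the uniform bound \(0\le\mathfrak B_m\le c_0\norm m+\frac{c_1}{2}\norm m^2\), which comes from the Gozlan--Sylvestre affine modulus (\Cref{lem:global-modulus}). It also uses the superquadratic growth of \(\Theta\) and the supercoercivity of \(\Phi\). The lower bound comes from the same estimate for the reverse map together with \Cref{lem:forward-reverse-transfer}, as you intended.

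Your regularisation plan is fine. So is your asymptotic computation: the minimiser is \(\beta\approx(\sqrt8\,L)^{-1}\), and \(1/(1-\beta^2)\ge1+\beta^2\) gives the matching lower bound.
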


The quadratic order of \(\log\cC(0,L)\) is already necessary in dimension one. For \(L>0\), let \(B^{(L)}(t)=-L|t|\), and denote the associated monotone transport by \(T_L\). It follows that
\[
\Lip(T_L)\ge \e^{L^2/2}.
\]
We prove this estimate and discuss the normalisation in \Cref{rem:neeman-obstruction}. This shows optimality of the quadratic order, although the leading constant in the upper logarithmic asymptotic is not sharp.

\subsection{Anisotropic theorem and applications}
\label{subsec:anisotropic-theorem-applications}

\Cref{thm:gaussian-source} follows from the two-marginal estimate in \Cref{thm:spectral-caffarelli}. In this theorem, we study arbitrary globally Lipschitz perturbations of both marginals under one-sided distributional matrix-curvature bounds \(D^2V\preceq Q\) and \(D^2W\succeq P\). Its constants keep the directional geometry of the essential gradient ranges, and affine perturbations recover the sharp noncommuting Caffarelli tensor \(G\) determined by the equation \(GPG=Q\). Scalar choices of the auxiliary metric give the ellipsoidal and isotropic results we discuss in \Cref{cor:matrix-caffarelli,cor:perturbative-caffarelli}. The application of the same estimate to the reverse transport yields the two-sided Gaussian bound in \Cref{cor:gaussian-bilip}.

We also prove two Gaussian applications. First, \Cref{thm:gaussian-displacement}
identifies the pointwise displacement of the Brenier map between two Gaussian
log-Lipschitz perturbations through the gradient ranges of the perturbations. Moreover, for the Brenier map from \(\gamma_d\) to a compact Gaussian mixture \(\gamma_d*\eta\), \Cref{thm:mixtures} gives a dimension-free Hessian estimate controlled by the geometry of \(\supp\eta\). In \Cref{sec:geometry,sec:anisotropic-proof}, we prove the anisotropic theorem and derive its one-sided corollaries. \Cref{sec:two-sided-gaussian} presents the forward--reverse argument, the Gaussian constants, and the displacement estimate. \Cref{sec:mixtures} is about compact Gaussian mixtures, and \Cref{app:exact-schur-envelopes} proves the exact spectral Schur envelope for the coercivity argument.

\section{Related works}
\label{sec:related-work}

Fathi, Mikulincer, and Shenfeld \cite[Theorem 1]{FMS} proved that the Kim--Milman Langevin transport map satisfies a dimension-free Lipschitz estimate in the
log-Lipschitz perturbation settings. They also proposed a conjecture for the quadratic-cost Brenier map \cite[Conjecture 1]{FMS}. They further described the Gaussian case as a good starting point and identified the noncompact setting, possible unboundedness of solutions, and the need for concrete dimension-free estimates as the main difficulties \cite[Section 7.2]{FMS}.

For comparison, let us assume that \(\Lip(a),\Lip(b)\le L\) in the log-Lipschitz
Gaussian setting. The two-sided estimate of Gozlan and Sylvestre \cite[Theorem 5.15]{GS} gives
\[
\norm{x-y}-8L
\le\norm{T_{a,b}(x)-T_{a,b}(y)}
\le\norm{x-y}+8L
\qquad(x,y\in\R^d).
\]
Observe that the additive approximate-isometry bound does not control the differential quotient as \(y\to x\), so it does not provide the Lipschitz estimate for the Brenier map.

There exist multiple results in the literature that provide dimension-free Lipschitz or contraction estimates for heat-flow, Langevin, and other diffusion transports
under curvature, first-order, or H\"older hypotheses. See
\cite{KimMilman,Neeman,BrigatiPedrotti,ConfortiEichinger,LopezRivera,StephanovitchFlow} for a selection of those studies, and consult \cite{ChewiEichingerPooladian} for quantitative stability estimates for the Kim--Milman flow map. We stress that these estimates analyse transports that are not defined by quadratic-cost optimality. Moreover, the Kim--Milman heat-flow map can differ from the Brenier map \cite{Tanana}, so to the best of our knowledge, \Cref{thm:gaussian-source} gives the first dimension-free differential estimate for the canonical quadratic-cost transport in the log-Lipschitz Gaussian regime. The anisotropic comparison also has a very rich history. For example, Valdimarsson\cite[Theorem 1.2]{Valdimarsson} established an estimate for the Gaussian-convolution source data under the partial commutation hypothesis, and Chewi and Pooladian \cite[Theorems 13--14 and Remark 15]{ChewiPooladian} extended this result to general source and target measures that satisfy the commuting matrix curvature bounds. Finally, Gozlan and Sylvestre \cite[Theorem 5.5]{GS} obtained the fully noncommuting tensor estimate. 

Other global estimates for Brenier maps are known under different structural
assumptions. For instance, Colombo, Figalli, and Jhaveri \cite{ColomboFigalliJhaveri} obtained Lipschitz changes of variables for compactly supported perturbations under additional structural assumptions. The trace analogue of Caffarelli's contraction
theorem for log-subharmonic source measures and strongly log-concave targets
is due to De Philippis and Shenfeld \cite{DePhilippisShenfeld}, while recent
results by Ammari and Figalli \cite{AmmariFigalli} and Bidoia \cite{Bidoia} establish global Brenier-map estimates in special structured regimes. Ammari and Figalli \cite{AmmariFigalli} study a dimension-free interpolation between polynomial-type
and log-concave densities, and Bidoia \cite{Bidoia} obtains bounds for good convex, monotone, positively homogeneous functionals under the associated bounds. Further global estimates for Brenier maps under assumptions other than our regime are given in \cite{KolesnikovHessian,ColomboFathi,CarlierFigalliSantambrogio}. For growth
estimates obtained from concentration inequalities, consult \cite{FathiGrowth}.

For functional inequalities concerning Gaussian convolutions of compactly
supported measures, see \cite{Zimmermann,BardetGozlanMalrieuZitt}. In dimension one, Zimmermann proves a logarithmic Sobolev inequality by estimating the derivative of the monotone rearrangement from the Gaussian, which is precisely the Brenier map
\cite[Section 2]{Zimmermann}. Higher-dimensional transport estimates for
Gaussian mixtures are known for the finite-dimensional heat-flow map
\cite[Theorem 2]{MikulincerShenfeldHeat} and for the Brownian transport map
from Wiener measure to the mixture \cite[Theorem 1.3]{MikulincerShenfeldBrownian}. Other relevant results under Gaussian convolution are given in \cite{KlartagPutterman},  parabolic log-Hessian estimates appear in \cite{ChaintronConfortiEichinger}, while the anisotropic \(W_\infty\) transport estimates are proved in \cite{KhudiakovaMaasPedrotti}.

Finally, let us recall some finite-difference results we use in our proofs. Valdimarsson
applies a translated finite-difference corrector of the same type \cite[Section 2, eqs. (2.1)--(2.6)]{Valdimarsson}, and Kolesnikov proposes a related multidimensional argument \cite[Theorem 2.5, eqs. (7)--(8)]{KolesnikovHolder}. We combine these ideas with the joint maximum principle and the compatible pseudoinverse Schur-complement idea we have already introduced in our previous work \cite[Sections 2, 4, and 8.1, especially Lemma 8.1]{Gwozdz}. The most important extension is that in
\cite{Gwozdz}, we relied on compact target range to obtain the coercivity needed for the joint maximum principle. In this work, we consider the full-support context that replaces that range bound by the one-sided affine modulus from \Cref{lem:global-modulus}.

\section{Main anisotropic estimate}
\label{sec:geometry}

Let us first establish the necessary notation and preliminaries. We write \(\mathrm{Sym}_d\) for the real symmetric \(d\times d\) matrices, \(\mathrm{Sym}_d^{++}\) for the positive-definite cone, and \(\mathbb S^{d-1}\) for the Euclidean unit sphere. For a nonempty compact convex set \(K\subset\R^d\), we define the support function \(h_K:\R^d\to\R\) by
\[
h_K(u):=\sup_{z\in K}\ip{z}{u}.
\]

For \(M\in\mathrm{Sym}_d^{++}\), we write
\begin{equation}\label{eq:matrix-norm}
\norm{x}_M:=\sqrt{\ip{Mx}{x}},
\qquad
\Lip_M(f):=
\sup_{x\ne y}\frac{|f(x)-f(y)|}{\norm{x-y}_M}
\end{equation}
for \(x\in\R^d\) and scalar-valued \(f:\R^d\to\R\). For scalar-valued \(f\), this reduces to
\[
\Lip(f)=\Lip_{\Id}(f).
\]
For \(K,L\subset\R^d\) and \(v\in\R^d\), we use the notation
\[
\begin{aligned}
K-L&:=\{p-q:p\in K,\ q\in L\},
&\qquad
K+v&:=\{p+v:p\in K\},\\
K-v&:=\{p-v:p\in K\},
&\qquad
-K&:=\{-p:p\in K\}.
\end{aligned}
\]

\begin{definition}[Gradient-width body]\label{def:gradient-width}
Let \(f:\R^d\to\R\) be globally Lipschitz. We set
\begin{equation}\label{eq:gradient-body}
\cK_f
:=\overline{\conv}\bigl(\essran\nabla f\bigr).
\end{equation}
Here, \(\nabla f\) is the almost-everywhere defined weak gradient, and we take
\(\essran\) with respect to Lebesgue measure on \(\R^d\). We further define the associated gradient-width gauge \(\omega_f:\R^d\to[0,\infty)\) by
\begin{equation}\label{eq:gradient-width}
\omega_f(h)
:=\sup_{p,q\in\cK_f}\ip{p-q}{h}
=h_{\cK_f-\cK_f}(h).
\end{equation}
\end{definition}

Let \(Q,P\in\mathrm{Sym}_d^{++}\) and set
\begin{equation}\label{eq:G-main}
G:=P^{-1/2}(P^{1/2}QP^{1/2})^{1/2}P^{-1/2}.
\end{equation}
Squaring \(P^{1/2}GP^{1/2}\) gives
\begin{equation}\label{eq:GPG-main}
GPG=Q.
\end{equation}
Note that uniqueness among positive-definite solutions follows directly from uniqueness of the positive square root. The balanced curvature matrix is
\begin{equation}\label{eq:R-main}
\mathsf R
:=G^{-1/2}QG^{-1/2}
=G^{1/2}PG^{1/2}.
\end{equation}
For \(H\in\mathrm{Sym}_d^{++}\), we define
\begin{equation}\label{eq:D-H-main}
\mathsf D_H
:=H^{1/2}\mathsf RH^{1/2}
-H^{-1/2}\mathsf RH^{-1/2}.
\end{equation}
For \(r\in\R\), we set
\[
r_+:=\max\{r,0\}.
\]
Let \(a,b:\R^d\to\R\) be globally Lipschitz. For \(u\in\mathbb S^{d-1}\), we define
\begin{align}
n_{Q,P}^{a,b,\mathrm{red}}(H,u)
&:={}
\omega_b(G^{1/2}H^{1/2}u)
+\omega_a(G^{-1/2}H^{-1/2}u),
\label{eq:n-red-main}\\
\cA_{Q,P}^{\mathrm{red}}(H,a,b)
&:={}
\int_0^\infty
\sup_{u\in\mathbb S^{d-1}}
\left(
n_{Q,P}^{a,b,\mathrm{red}}(H,u)
-\frac t2\ip{\mathsf D_Hu}{u}
\right)_+\dd t.
\label{eq:spectral-action-red-main}
\end{align}
Observe that
\begin{equation}\label{eq:gauge-admissibility-main}
H^{1/2}\mathsf D_HH^{1/2}=H\mathsf RH-\mathsf R,
\end{equation}
so \(\mathsf D_H\succ0\) if and only if \(H\mathsf RH\succ\mathsf R\). Under
this condition, the reduced spectral action is finite. The admissible class is also nonempty. Indeed, for every \(c>1\), the matrix \(H=c\Id\) is admissible because \(H\mathsf RH=c^2\mathsf R\succ\mathsf R\).

Let \(F:\R^d\to\R\) be finite and continuous, and let \(A\in\mathrm{Sym}_d\). Let \(D_v^2F\) denote the second distributional derivative of \(F\) in the direction \(v\in\R^d\). The relation \(D^2F\preceq A\) (in the sense of distributions) means that
\[
\big\langle D_v^2F,\varphi\big\rangle
\le
\ip{Av}{v}\int_{\R^d}\varphi(x)\,\dd x
\]
for every \(v\in\R^d\) and every nonnegative \(\varphi\in C_c^\infty(\R^d)\). The reverse inequality defines \(D^2F\succeq A\).

\begin{theorem}[Spectral perturbative Caffarelli theorem]
\label{thm:spectral-caffarelli}
Let \(Q,P\in\mathrm{Sym}_d^{++}\), and let \(V,W:\R^d\to\R\) be finite continuous functions such that, in the sense of distributions,
\begin{equation}\label{eq:matrix-curvature-main}
D^2V\preceq Q,
\qquad
D^2W\succeq P.
\end{equation}
Let \(a,b:\R^d\to\R\) be globally Lipschitz, and set
\[
Z_{\mu,a}:=\int_{\R^d}\e^{-V(x)-a(x)}\dd x,
\qquad
Z_{\nu,b}:=\int_{\R^d}\e^{-W(y)-b(y)}\dd y.
\]
We assume that \(0<Z_{\mu,a},Z_{\nu,b}<\infty\) and that the probability measures
\begin{equation}\label{eq:matrix-marginals-main}
\dd\mu_a(x)=Z_{\mu,a}^{-1}\e^{-V(x)-a(x)}\dd x,
\qquad
\dd\nu_b(y)=Z_{\nu,b}^{-1}\e^{-W(y)-b(y)}\dd y
\end{equation}
have finite second moments. Let \(T=\nabla\Phi\) be the quadratic-cost Brenier map from \(\mu_a\) to \(\nu_b\). Its existence and uniqueness follow from the Brenier--McCann theorem \cite{Brenier,McCann}.

For every \(H\succ0\) such that \(H\mathsf RH\succ\mathsf R\), it follows that
\begin{equation}\label{eq:spectral-upper-main}
0\preceq D^2\Phi
\preceq
\exp\!\bigl(\cA_{Q,P}^{\mathrm{red}}(H,a,b)\bigr)
G^{1/2}HG^{1/2}
\end{equation}
in the sense of matrix-valued distributions and almost everywhere. In particular, the Brenier potential may be chosen in \(C^{1,1}(\R^d)\), and \(T=\nabla\Phi\) has a globally defined Lipschitz representative. For this representative, we set
\begin{equation}\label{eq:M-H-main}
M_H
:=\exp\!\bigl(\cA_{Q,P}^{\mathrm{red}}(H,a,b)\bigr)
G^{1/2}HG^{1/2},
\end{equation}
so that
\begin{equation}\label{eq:spectral-metric-map-main}
\norm{T(x)-T(y)}_{M_H^{-1}}
\le\norm{x-y}_{M_H}
\qquad(x,y\in\R^d).
\end{equation}
\end{theorem}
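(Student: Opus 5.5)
We reduce to a balanced, isotropic-curvature configuration by a linear change of variables, and then prove the matrix bound by a translated finite-difference corrector combined with a joint maximum principle.

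\emph{Step 1: normalisation.} Replace \(x\mapsto G^{1/2}x\) on the source and \(y\mapsto G^{-1/2}y\) on the target. The new Brenier potential is \(\tilde\Phi(x)=\Phi(G^{1/2}x)\) composed with the target scaling, and its Hessian is \(G^{1/2}D^2\Phi\, G^{1/2}\) up to the correct conjugation. After this change both curvature bounds become \(D^2\tilde V\preceq\mathsf R\) and \(D^2\tilde W\succeq\mathsf R\) by \eqref{eq:R-main}. The gradient bodies transform linearly, so the gauges become \(\omega_a(G^{-1/2}\cdot)\) and \(\omega_b(G^{1/2}\cdot)\). A second conjugation by \(H^{1/2}\) puts the target bound \(\mathsf R\) against the source bound. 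The claim is then reduced to proving \(D^2\Psi\preceq e^{\cA}\Id\) for a potential \(\Psi\) between marginals with \(D^2V\preceq H^{-1/2}\mathsf RH^{-1/2}\) and \(D^2W\succeq H^{1/2}\mathsf RH^{1/2}\). The gap between these two bounds is exactly \(\mathsf D_H\succ0\).

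\emph{Step 2: regularisation.} Approximate \(V,W,a,b\) by smooth functions. Mollify \(a,b\), which keeps \(\cK\) inside the original bodies. Add small confining quadratic terms so that Caffarelli's regularity gives smooth, strictly convex potentials, and so that stability of Brenier maps lets us pass to the limit in the distributional inequalities. Lower semicontinuity of the action handles \(\cA\).

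\emph{Step 3: corrector and maximum principle.} For a unit vector \(u\) and \(t>0\), consider the second difference
\[
\delta(x)=\Psi(x+tu)+\Psi(x-tu)-2\Psi(x).
\]
Use the Monge--Ampère equation \(\log\det D^2\Psi=-V-a+W(\nabla\Psi)+b(\nabla\Psi)+c\). At a maximum point of a suitably corrected \(\delta\), combine concavity of \(\log\det\) with the curvature comparison: the quadratic gap \(\tfrac t2\ip{\mathsf D_Hu}{u}\) appears, while the Lipschitz perturbations contribute at most \(n^{\mathrm{red}}(H,u)\) through the gradient-width gauges. The gauges enter because differences of \(a\) and of \(b\) along \(tu\) lie in \(\cK-\cK\) paired with \(tu\).

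This yields a differential inequality in \(t\) for \(m(t)=\sup_{x,|u|=1}\delta/t^2\), or more precisely for the logarithmic growth of the difference quotients:
\[
\frac{d}{dt}\log(\cdot)\le\Bigl(n-\tfrac t2\ip{\mathsf D u}{u}\Bigr)_+.
\]
Integrating from \(0\) to \(\infty\), and using that the difference quotient converges to \(1\) at \(t=\infty\) in the normalised picture, gives the factor \(e^{\cA}\). The corrector must be a translated finite difference in the style of Valdimarsson and Kolesnikov, coupled jointly in \((x,y)\) as in the author's earlier work.

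\emph{Step 4: consequences.} The lower bound \(0\preceq D^2\Phi\) is convexity. A uniform distributional upper bound makes \(\Phi\in C^{1,1}\) with a Lipschitz gradient. Finally, \eqref{eq:spectral-metric-map-main} follows from \(0\preceq D^2\Phi\preceq M\): write \(T(x)-T(y)=A(x-y)\) with \(A=\int_0^1D^2\Phi\), so that \(0\preceq A\preceq M\). Then \(M^{-1/2}AM^{-1/2}\) has norm at most \(1\), which gives \(\norm{A h}_{M^{-1}}\le\norm{h}_M\).

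\textbf{Main obstacle.} The hard step is Step 3. On full support the supremum of \(\delta\) need not be attained, so we need coercivity: a one-sided affine growth modulus for \(\Psi\), namely \Cref{lem:global-modulus}, which ensures the corrected quantity attains its maximum. We also need the exact Schur-complement envelope to optimise the spectral coupling of source and target, which is what produces the \(\sup_u\) inside the integrand rather than a cruder scalar bound. I would first try to obtain coercivity from the quadratic gap \(\mathsf D_H\succ0\) dominating the linear-in-\(t\) perturbation terms, using the pseudoinverse Schur complement to decouple the \(x\) and \(y\) increments.
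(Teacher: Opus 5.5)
Steps 1 and 4 match the paper. Your second conjugation by \(H^{1/2}\) is equivalent to the paper's choice \(S=\beta H\) in balanced coordinates. In Step 3 you name the right ingredients: a translated corrector, a joint maximum principle, \Cref{lem:global-modulus}, and the Schur envelope. But the argument that actually produces \(\e^{\cA}\) is missing, and the mechanism you describe would not produce it.

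The corrector you write down, \(\delta(x)=\Psi(x+tu)+\Psi(x-tu)-2\Psi(x)\) maximised at a fixed scale \(t\), is Caffarelli's argument. Its curvature contributions are of order \(t^2\), while the width terms \(\omega_a,\omega_b\) are of order \(t\). So it yields an estimate \(\sup_x\delta\le Ct^2+C't\) of Gozlan--Sylvestre type, which gives no control of \(D^2\Psi\) as \(t\to0\). A fixed-scale maximum principle does not produce a multiplicative inequality for \(\frac{d}{dt}\log Q(t)\), where \(Q(t)=\sup\delta/t^2\). The inequality you write also points the wrong way: integrating \(\frac{d}{dt}\log Q\le(\cdots)_+\) over \((0,\infty)\) bounds \(Q(0)\) from below, not above. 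Finally, the boundary condition ``\(Q\to1\) as \(t\to\infty\)'' is false. Take the unperturbed Gaussian pair in your normalised picture, with \(\mathsf R_s:=H^{-1/2}\mathsf RH^{-1/2}\) and \(\mathsf R_t:=H^{1/2}\mathsf RH^{1/2}\). Then \(Q\equiv\ip{G'u}{u}<1\), where \(G'\mathsf R_tG'=\mathsf R_s\), and for general marginals there is no reason for a limit to exist at all.

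The missing idea is to make the translation a variable and to encode the scale dependence in a barrier. The paper maximises \(\mathfrak B_m(x)-\Theta(m)-\varepsilon\Phi(x)\) jointly in \((x,m)\), with \(\Theta(m)=\theta(\norm{m}_S)\) and \(\theta'=p\).

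\emph{First order.} Stationarity in \(m\) pins the gradient increment, \(T(x+m)-T(x)=p(r)S^{1/2}u\). With \(q=p(r)\), the bound \(r\le\beta q\) turns the curvature terms into the gap \(\frac12q^2d_{\beta,S}(u)\), set against the width terms \(q\,n_{\beta,S}(u)\). Thus the variable integrated in \(\cA\) is the size of the gradient increment, not of the step.

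\emph{Second order.} The joint second-order condition, through Albert's generalised Schur complement, adds \(\cS(A,C,D^2\Theta(m))\) to the linearised inequality. \Cref{lem:matrix-coercivity} then bounds \(\cH(R_m)+\cS\) below by \(\Xi_+\) of \(\ip{Am}{m}/\ip{D^2\Theta(m)m}{m}\), which tends to \(p(r)/(rp'(r))\).

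\emph{The profile.} The paper takes \(p=\mathscr R_0^{-1}\) with \(\mathscr R_0(\xi)=K^{-1}\xi\e^{E_\lambda(\xi)}\), designed so that \(p/(rp')=1+q_\lambda(q)\). The inequality \(\Xi_+(1+z)\ge z\) then absorbs \(q\,n_{\beta,S}(u)\) (\Cref{lem:spectral-penalty-certificate}). This is where \(\e^{\cA}\) comes from. The \(\sup_u\) appears because a single radial profile must dominate the worst direction, not because of the Schur envelope.

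\emph{Large scales.} For large \(r\), \(p(r)=r/\beta_0+(r-r_0)^3\) with \(\beta_0<\beta\), and the contradiction comes from the strict gap \(d_0>0\). So no boundary value at infinity is needed. \Cref{lem:global-modulus} serves only to attain the maximum and to get \(\varepsilon T(x_\varepsilon)\to0\).

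The bound \(D^2\Phi\preceq KS\) is then read off from \(\Theta(th)=\frac{Kt^2}{2}\ip{Sh}{h}+o(t^2)\).

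A smaller point in Step 2: passing \(D^2\Phi_n\preceq\e^{\cA_n}M\) to the limit needs \(\limsup_n\cA_n\le\cA\), not lower semicontinuity. The paper gets \(\cA_n\le\cA\) directly from \(\omega_{a_n}\le\omega_a\) and \(\omega_{b_n}\le\omega_b\). A mollified Moreau envelope keeps \(D^2V_n\preceq\mathsf R\) exactly, and \(V_n\ge V\), \(W_n\ge W\) give domination.
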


Notice that the matrices in the family \eqref{eq:spectral-upper-main} need not be comparable in the Loewner order. Nevertheless, for every fixed \(v\in\R^d\), the scalar distribution \(D_v^2\Phi\) satisfies
\begin{equation}\label{eq:directional-envelope-main}
D_v^2\Phi
\le
\inf_{\substack{H\succ0\\H\mathsf RH\succ\mathsf R}}
\e^{\cA_{Q,P}^{\mathrm{red}}(H,a,b)}
\ip{G^{1/2}HG^{1/2}v}{v}.
\end{equation}
For a fixed direction \(v\), the associated pointwise inequality holds almost everywhere once we take a minimising sequence of admissible metrics.

\begin{corollary}[Affine perturbations and sharpness]
\label{cor:affine-sharpness}
For \(m\in\R^d\) and \(\Sigma\in\mathrm{Sym}_d^{++}\), let \(\mathcal N(m,\Sigma)\) be the Gaussian probability measure on \(\R^d\) with mean \(m\) and covariance matrix \(\Sigma\). Under the hypotheses of \Cref{thm:spectral-caffarelli}, assume that \(a\) and \(b\) are affine. It follows that
\begin{equation}\label{eq:unperturbed-matrix-sharp}
D^2\Phi\preceq G.
\end{equation}
For \(a=b=0\), equality is attained by the Gaussian pair \(\mathcal N(0,Q^{-1})\) and \(\mathcal N(0,P^{-1})\).
\end{corollary}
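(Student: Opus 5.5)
Since \(a\) and \(b\) are affine, their weak gradients are constant, say \(\nabla a\equiv\alpha\) and \(\nabla b\equiv\beta\). Then \(\cK_a=\{\alpha\}\) and \(\cK_b=\{\beta\}\) are singletons, so \(\cK_a-\cK_a=\cK_b-\cK_b=\{0\}\) and \(\omega_a\equiv\omega_b\equiv0\) by \eqref{eq:gradient-width}. Hence \(n_{Q,P}^{a,b,\mathrm{red}}(H,u)=0\) for every \(H\) and \(u\). For any admissible \(H\) we have \(\mathsf D_H\succ0\), so the integrand in \eqref{eq:spectral-action-red-main} is \(\sup_u(-\tfrac t2\ip{\mathsf D_Hu}{u})_+=0\) for \(t>0\). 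Thus \(\cA_{Q,P}^{\mathrm{red}}(H,a,b)=0\). \Cref{thm:spectral-caffarelli} then gives, for every \(H\succ0\) with \(H\mathsf RH\succ\mathsf R\),
\[
D^2\Phi\preceq G^{1/2}HG^{1/2}.
\]

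To reach \(G\), take \(H=c\Id\) with \(c>1\), which is admissible as noted after \eqref{eq:gauge-admissibility-main}. This yields \(D^2\Phi\preceq cG\) for all \(c>1\). For fixed \(v\in\R^d\) and nonnegative \(\varphi\in C_c^\infty(\R^d)\), we get \(\langle D_v^2\Phi,\varphi\rangle\le c\ip{Gv}{v}\int\varphi\). Letting \(c\downarrow1\) gives \eqref{eq:unperturbed-matrix-sharp} distributionally. Since \(\Phi\in C^{1,1}\), the pointwise a.e. statement follows by taking \(c=1+1/n\) along a countable dense set of directions \(v\), using continuity in \(v\) of the quadratic form. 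A subtle point is that the constants in the theorem are stated for affine \(a,b\) without changing \(V,W\). One could also absorb \(a,b\) into \(V,W\), since affine terms preserve \eqref{eq:matrix-curvature-main}, but the direct computation above is enough.

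For sharpness with \(a=b=0\), let \(\mu=\mathcal N(0,Q^{-1})\) and \(\nu=\mathcal N(0,P^{-1})\), so \(V=\tfrac12\ip{Qx}{x}\) and \(W=\tfrac12\ip{Py}{y}\) up to constants. These satisfy \eqref{eq:matrix-curvature-main} with equality. The map \(T(x)=Gx\) is the gradient of the convex function \(\tfrac12\ip{Gx}{x}\), because \(G\succ0\). It pushes \(\mathcal N(0,Q^{-1})\) to \(\mathcal N(0,GQ^{-1}G)\). From \eqref{eq:GPG-main}, \(Q^{-1}=G^{-1}P^{-1}G^{-1}\), so \(GQ^{-1}G=P^{-1}\). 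By Brenier--McCann uniqueness, \(T\) is the Brenier map, and \(D^2\Phi=G\). The bound is therefore attained.

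The only delicate step is the limiting argument \(c\downarrow1\), since \(H=\Id\) itself is not admissible. This is routine because the estimate is closed under the distributional limit. Everything else is direct substitution.
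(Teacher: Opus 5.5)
Your proof is correct and follows essentially the same route as the paper. Affine tilts have zero gradient width, so the reduced action vanishes. The admissible choice \(H=c\Id\) with \(c>1\) gives \(D^2\Phi\preceq cG\), and letting \(c\downarrow1\) gives the bound. The linear map \(T(x)=Gx\), together with \(GQ^{-1}G=P^{-1}\), gives equality for the Gaussian pair. Your remarks on passing the distributional and almost-everywhere bounds to the limit \(c\downarrow1\) fill in details the paper leaves implicit.
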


\begin{corollary}[Ellipsoidal Lipschitz perturbations]
\label{cor:matrix-caffarelli}
With the hypotheses of \Cref{thm:spectral-caffarelli}, we set
\begin{equation}\label{eq:R-rho-main}
\rho:=\lambda_{\min}(\mathsf R),
\qquad
\ell_a:=\Lip_G(a),
\qquad
\ell_b:=\Lip_{G^{-1}}(b).
\end{equation}
It follows that
\begin{equation}\label{eq:matrix-upper-main}
0\preceq D^2\Phi\preceq\cC_\rho(\ell_a,\ell_b)G.
\end{equation}
In other terms,
\begin{equation}\label{eq:matrix-metric-map-main}
\norm{T(x)-T(y)}_{G^{-1}}
\le\cC_\rho(\ell_a,\ell_b)\norm{x-y}_G
\qquad(x,y\in\R^d).
\end{equation}
\end{corollary}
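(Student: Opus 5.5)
We specialise \Cref{thm:spectral-caffarelli} to scalar metrics \(H=c\Id\) with \(c>1\), and then optimise over \(c\). Setting \(c=\beta^{-1}\) with \(0<\beta<1\) should produce exactly the infimum in \eqref{eq:def-Crho}.

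First we compute the ingredients. With \(H=c\Id\), \eqref{eq:D-H-main} gives \(\mathsf D_H=(c-c^{-1})\mathsf R\), so
\[
\ip{\mathsf D_Hu}{u}\ge (c-c^{-1})\rho
\qquad(u\in\mathbb S^{d-1}).
\]
For the numerator we bound the gradient-width gauges. If \(\Lip_M(f)\le\ell\), then every \(p\in\essran\nabla f\) satisfies \(\ip{p}{h}\le \ell\norm{h}_M\). Hence \(h_{\cK_f}(h)\le \ell\norm{h}_M\) and
\[
\omega_f(h)\le 2\ell\norm{h}_M .
\]
The rate of this step is the dual pairing between \(\norm{\cdot}_M\) and \(\norm{\cdot}_{M^{-1}}\): weak gradients of an \(M\)-Lipschitz function lie in the \(M^{-1}\)-unit ball scaled by \(\ell\). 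Applying this with \(M=G^{-1}\) to \(b\) and with \(M=G\) to \(a\) gives
\[
\omega_b(c^{1/2}G^{1/2}u)\le 2\ell_b c^{1/2}\norm{G^{1/2}u}_{G^{-1}}=2\ell_b c^{1/2},
\qquad
\omega_a(c^{-1/2}G^{-1/2}u)\le 2\ell_a c^{-1/2}.
\]
Therefore
\[
n^{\mathrm{red}}(H,u)\le 2(c^{1/2}\ell_b+c^{-1/2}\ell_a)=:N
\]
uniformly in \(u\).

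Second, we integrate. Write \(\delta:=(c-c^{-1})\rho\). Then
\[
\cA^{\mathrm{red}}\le\int_0^\infty (N-t\delta/2)_+\dd t = \frac{N^2}{\delta}.
\]
With \(c=1/\beta\), we have \(c^{1/2}\ell_b+c^{-1/2}\ell_a=\beta^{-1/2}(\ell_b+\beta\ell_a)\), so \(N^2=4(\ell_b+\beta\ell_a)^2/\beta\). Also \(\delta=\rho(1-\beta^2)/\beta\). Hence
\[
\cA^{\mathrm{red}}\le \frac{4(\ell_b+\beta\ell_a)^2}{\rho(1-\beta^2)}.
\]
Since \(G^{1/2}HG^{1/2}=\beta^{-1}G\), \eqref{eq:spectral-upper-main} yields
\[
D^2\Phi\preceq \beta^{-1}\exp\!\Big(\tfrac{4(\ell_b+\beta\ell_a)^2}{\rho(1-\beta^2)}\Big)\,G
\]
for every \(\beta\in(0,1)\).

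Third, we take the infimum. For each fixed \(\beta\) the bound above holds as a distributional inequality. Testing against a fixed direction \(v\) and a fixed \(\varphi\ge0\), and letting \(\beta\) run along a minimising sequence, the scalar constants converge to \(\cC_\rho(\ell_a,\ell_b)\). Because the matrix is always the fixed \(G\), the bounds are now Loewner-comparable, unlike the general family in \eqref{eq:spectral-upper-main}. This gives \eqref{eq:matrix-upper-main} distributionally, and almost everywhere via the a.e. Hessian of the \(C^{1,1}\) potential.

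In the degenerate case \(\ell_a=\ell_b=0\), \(a\) and \(b\) are constant. The convention \(\cC_\rho(0,0)=1\) then has to be checked separately. For this we let \(\beta\to1\): since \(N=0\) the exponential factor equals \(1\) for every \(\beta\), and \(\beta^{-1}\to1\). Alternatively we cite \Cref{cor:affine-sharpness}.

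Finally, for the metric form \eqref{eq:matrix-metric-map-main}, set \(C:=\cC_\rho(\ell_a,\ell_b)\) and \(S:=G^{-1/2}\). From \(0\preceq D^2\Phi\preceq C\,G\) we get \(0\preceq SD^2\Phi S\preceq C\Id\). This is a symmetric matrix, so its operator norm is at most \(C\). Writing
\[
T(x)-T(y)=\int_0^1 D^2\Phi(y+s(x-y))(x-y)\dd s
\]
along segments (valid a.e. and extended by continuity of the Lipschitz representative), we obtain
\[
\norm{T(x)-T(y)}_{G^{-1}}=\norm{S(T(x)-T(y))}\le C\norm{S^{-1}(x-y)}=C\norm{x-y}_G .
\]

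The main obstacle is the first step: matching the norms correctly in the gauge bound. One must check that \(\Lip_{G^{-1}}(b)\) is the right quantity to control \(\omega_b\) evaluated at \(G^{1/2}\)-scaled vectors, and that the factor \(2\) from \(\cK_f-\cK_f\) produces exactly the constant \(4\) in \eqref{eq:def-Crho}.
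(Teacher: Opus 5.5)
Your proposal is correct and follows the paper's proof essentially step for step. The paper also takes \(H=\beta^{-1}\Id\) in \Cref{thm:spectral-caffarelli}, bounds \(n^{\mathrm{red}}_{Q,P}\) by \(2\beta^{-1/2}\ell_b+2\beta^{1/2}\ell_a\) and \(\mathsf D_H\) below via \((\beta^{-1}-\beta)\rho\), integrates to get the exponent \(4(\ell_b+\beta\ell_a)^2/(\rho(1-\beta^2))\), and takes the infimum over \(\beta\). Your dual-norm justification of \(\omega_f(h)\le 2\ell\norm{h}_M\) and your segment argument for the metric form only spell out steps the paper leaves implicit; the latter is \Cref{lem:hessian-bound-metric} applied with \(M=\cC_\rho(\ell_a,\ell_b)G\).
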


\begin{remark}[Euclidean Lipschitz perturbations]\label{rem:euclidean-matrix}
We assume that \(L_a,L_b\ge0\), \(\Lip(a)\le L_a\), and \(\Lip(b)\le L_b\), and obtain
\[
\Lip_G(a)\le\frac{L_a}{\sqrt{\lambda_{\min}(G)}},
\qquad
\Lip_{G^{-1}}(b)\le L_b\sqrt{\lambda_{\max}(G)}.
\]
The estimate in \Cref{cor:matrix-caffarelli} depends on the perturbations via these two ellipsoidal Lipschitz constants. By contrast, \Cref{thm:spectral-caffarelli} keeps their directional gradient widths.
\end{remark}

\begin{corollary}[Scalar perturbative Caffarelli estimate]
\label{cor:perturbative-caffarelli}
Let \(D^2V\preceq\Lambda\Id\) and \(D^2W\succeq\kappa\Id\), where \(\Lambda,\kappa>0\). Under the hypotheses of \Cref{thm:spectral-caffarelli}, we obtain
\begin{equation}\label{eq:perturbative-upper-main}
0\preceq D^2\Phi
\preceq
\sqrt{\frac{\Lambda}{\kappa}}\,
\cC\!\left(
\frac{\Lip(a)}{\sqrt\Lambda},
\frac{\Lip(b)}{\sqrt\kappa}
\right)\Id.
\end{equation}
\end{corollary}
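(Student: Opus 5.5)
This corollary follows from \Cref{cor:matrix-caffarelli} with \(Q=\Lambda\Id\) and \(P=\kappa\Id\); the proof only identifies \(G\), \(\rho\), \(\ell_a\) and \(\ell_b\) and uses monotonicity of \(\cC_\rho\).

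\emph{Step 1: the tensor and the curvature matrix.} In \eqref{eq:G-main} every matrix is a scalar multiple of \(\Id\), so
\[
G=\kappa^{-1/2}(\kappa\Lambda)^{1/2}\kappa^{-1/2}\Id=\sqrt{\Lambda/\kappa}\,\Id,
\]
which satisfies \(GPG=Q\). By \eqref{eq:R-main},
\[
\mathsf R=G^{-1/2}QG^{-1/2}=\Lambda\sqrt{\kappa/\Lambda}\,\Id=\sqrt{\Lambda\kappa}\,\Id,
\]
hence \(\rho=\sqrt{\Lambda\kappa}\).

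\emph{Step 2: the ellipsoidal Lipschitz constants.} Put \(g:=\sqrt{\Lambda/\kappa}\). Then \(\norm{h}_G=\sqrt g\,\norm{h}\) and \(\norm{h}_{G^{-1}}=\norm{h}/\sqrt g\). By \eqref{eq:matrix-norm},
\[
\ell_a=\Lip(a)/\sqrt g,
\qquad
\ell_b=\sqrt g\,\Lip(b).
\]

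\emph{Step 3: rescale the constant.} In \eqref{eq:def-Crho} the exponent is
\[
\frac{4(\ell_b+\beta\ell_a)^2}{\rho(1-\beta^2)}
=\frac{4\bigl(\ell_b/\sqrt\rho+\beta\ell_a/\sqrt\rho\bigr)^2}{1-\beta^2},
\]
so \(\cC_\rho(\ell_a,\ell_b)=\cC(\ell_a/\sqrt\rho,\ell_b/\sqrt\rho)\). Now compute the rescaled arguments:
\[
\sqrt g\sqrt\rho=(\Lambda/\kappa)^{1/4}(\Lambda\kappa)^{1/4}=\sqrt\Lambda,
\]
so \(\ell_a/\sqrt\rho=\Lip(a)/\sqrt\Lambda\). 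Likewise
\[
\sqrt g/\sqrt\rho=(\Lambda/\kappa)^{1/4}(\Lambda\kappa)^{-1/4}=\kappa^{-1/2},
\]
so \(\ell_b/\sqrt\rho=\Lip(b)/\sqrt\kappa\).

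\emph{Step 4: conclude.} Inserting these values into \eqref{eq:matrix-upper-main} gives \(D^2\Phi\preceq\cC_\rho(\ell_a,\ell_b)\,G\), which is exactly \eqref{eq:perturbative-upper-main}.

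If \(\Lip(a)=\Lip(b)=0\), then \(\cC_\rho(0,0)=1\) by the convention in \eqref{eq:def-CAB}. The infimum in \eqref{eq:def-Crho} equals \(\inf_\beta 1/\beta=1\), so the convention agrees with the formula and the rescaling is consistent in this case too.

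The argument has no real obstacle. The only step needing care is the exponent bookkeeping in Step 3, and it is confirmed by the matching powers \((\Lambda/\kappa)^{1/4}(\Lambda\kappa)^{\pm1/4}\).
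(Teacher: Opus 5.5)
Your proposal is correct and follows the paper's proof: specialise \Cref{cor:matrix-caffarelli} to \(Q=\Lambda\Id\), \(P=\kappa\Id\), compute \(G=\sqrt{\Lambda/\kappa}\,\Id\), \(\rho=\sqrt{\Lambda\kappa}\), \(\ell_a=\Lip(a)(\Lambda/\kappa)^{-1/4}\), \(\ell_b=\Lip(b)(\Lambda/\kappa)^{1/4}\), and substitute into \eqref{eq:def-Crho}. Your rescaling identity \(\cC_\rho(\ell_a,\ell_b)=\cC(\ell_a/\sqrt\rho,\ell_b/\sqrt\rho)\) simply spells out that substitution, and because every identity here is exact, the monotonicity of \(\cC_\rho\) you announce at the start is never actually used.
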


For \(x_0\in\R^d\) and \(r\ge0\), let \(\overline B_r(x_0):=\{x\in\R^d:\norm{x-x_0}\le r\}\) denote the closed Euclidean ball.

\subsection{Gradient-width geometry and balancing}
\label{subsec:gradient-width-geometry-balancing}

\begin{lemma}[First-order remainders and gradient widths]
\label{lem:width-Bregman}
Let \(f:\R^d\to\R\) be globally Lipschitz.
\begin{enumerate}[label=(\roman*)]
\item The gauge \(\omega_f\) is finite, even, convex, and positively homogeneous, and \(\omega_{f+\ell}=\omega_f\) for every affine function \(\ell\).
\item If \(f\in C^1(\R^d)\), then, for all \(x,h\in\R^d\),
\begin{equation}\label{eq:width-Bregman-bounds}
-\omega_f(h)
\le
f(x+h)-f(x)-\ip{\nabla f(x)}{h}
\le\omega_f(h).
\end{equation}
\item If \(L\ge0\) and \(\Lip(f)\le L\), then, for every \(h\in\R^d\),
\begin{equation}\label{eq:width-Lip-bound}
\omega_f(h)\le2L\norm h.
\end{equation}
\item Let \((\zeta_\varepsilon)_{\varepsilon>0}\) be a standard mollifier family and set \(f_\varepsilon:=f*\zeta_\varepsilon\). We then obtain
\begin{equation}\label{eq:width-mollification}
\cK_{f_\varepsilon}\subseteq\cK_f,
\qquad
\omega_{f_\varepsilon}\le\omega_f.
\end{equation}
\end{enumerate}
\end{lemma}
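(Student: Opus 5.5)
The plan is to read all four items off the representation \(\omega_f=h_{\cK_f-\cK_f}\), together with the fact that \(\cK_f\) is a compact convex set.

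\emph{Compactness.} If \(\Lip(f)\le L\), then \(\norm{\nabla f}\le L\) almost everywhere. Hence \(\essran\nabla f\subseteq\overline B_L(0)\), so \(\cK_f\) is a compact convex subset of \(\overline B_L(0)\). It is nonempty, because the essential range of a measurable map on a set of positive measure is nonempty.

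\emph{Item (i).} \(\cK_f-\cK_f\) is a compact, convex, symmetric set containing \(0\). Its support function is therefore finite, convex, positively homogeneous, even, and nonnegative. For an affine \(\ell(x)=c+\ip{w}{x}\) we have \(\nabla(f+\ell)=\nabla f+w\), so \(\cK_{f+\ell}=\cK_f+w\). Differences are translation invariant, which gives \(\omega_{f+\ell}=\omega_f\).

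\emph{Item (iii).} \(\cK_f-\cK_f\subseteq\overline B_{2L}(0)\), so \(\omega_f(h)\le 2L\norm h\).

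\emph{Item (ii).} For \(f\in C^1\), continuity of \(\nabla f\) shows that every value \(\nabla f(z)\) lies in \(\essran\nabla f\subseteq\cK_f\): every neighbourhood of \(\nabla f(z)\) is hit by \(\nabla f\) on an open set, which has positive measure. By the fundamental theorem of calculus,
\[
f(x+h)-f(x)-\ip{\nabla f(x)}{h}=\int_0^1\ip{\nabla f(x+sh)-\nabla f(x)}{h}\dd s .
\]
The integrand lies in \(\bigl[-\omega_f(h),\omega_f(h)\bigr]\). Indeed, with \(p=\nabla f(x+sh)\) and \(q=\nabla f(x)\) both in \(\cK_f\), we have \(\ip{p-q}{h}\le\omega_f(h)\), and \(\ip{q-p}{h}\le\omega_f(h)\) bounds it from below. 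Integrating in \(s\) gives \eqref{eq:width-Bregman-bounds}.

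\emph{Item (iv) and the main obstacle.} The only step needing care is \(\cK_{f_\varepsilon}\subseteq\cK_f\). We have \(\nabla f_\varepsilon=(\nabla f)*\zeta_\varepsilon\), so \(\nabla f_\varepsilon(x)\) is an average of \(\nabla f\) against the probability density \(\zeta_\varepsilon(x-\cdot)\). To show this average lies in the closed convex set \(\cK_f\), I would use support functions. For every \(u\),
\[
\ip{\nabla f_\varepsilon(x)}{u}=\int\ip{\nabla f(y)}{u}\,\zeta_\varepsilon(x-y)\dd y\le h_{\cK_f}(u),
\]
since \(\ip{\nabla f(y)}{u}\le h_{\cK_f}(u)\) for almost every \(y\). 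The reason is that \(\nabla f(y)\in\essran\nabla f\) for almost every \(y\): the complement of the essential range is open and is covered by countably many balls, each of which \(\nabla f\) enters only on a null set. A closed convex set is the intersection of its supporting half-spaces, so \(\nabla f_\varepsilon(x)\in\cK_f\) for every \(x\). Therefore \(\essran\nabla f_\varepsilon\subseteq\cK_f\), and taking the closed convex hull gives \(\cK_{f_\varepsilon}\subseteq\cK_f\). Monotonicity of support functions under inclusion then yields \(\omega_{f_\varepsilon}\le\omega_f\).
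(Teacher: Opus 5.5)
Your proposal is correct and follows the paper's route. All four items are read off from \(\omega_f=h_{\cK_f-\cK_f}\). Item (ii) uses the integral form of the remainder, with both gradients in \(\cK_f\) by continuity, and item (iv) uses \(\nabla f_\varepsilon=(\nabla f)*\zeta_\varepsilon\) together with \(\nabla f\in\cK_f\) almost everywhere. Your additional justifications spell out steps the paper only asserts: that \(\cK_f\) is nonempty and bounded, the Lindel\"of argument that \(\nabla f(y)\in\essran\nabla f\) for almost every \(y\), and the supporting half-space argument that the average lies in \(\cK_f\).
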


\begin{proof}
The identity \(\omega_f=h_{\cK_f-\cK_f}\) gives the first statement. Notice that translating \(\cK_f\) by the gradient of an affine function does not change its difference body. Assume now that \(f\in C^1(\R^d)\). We obtain
\[
f(x+h)-f(x)-\ip{\nabla f(x)}{h}
=\int_0^1\ip{\nabla f(x+th)-\nabla f(x)}{h}\dd t.
\]
Since \(\nabla f\) is continuous, every value \(\nabla f(z)\) belongs to \(\essran\nabla f\). Indeed, the inverse image of each neighbourhood of \(\nabla f(z)\) contains a neighbourhood of \(z\) and has positive Lebesgue measure. Both gradients in the integrand belong to \(\cK_f\), which gives \eqref{eq:width-Bregman-bounds}. If \(f\) is \(L\)-Lipschitz, then \(\cK_f\subseteq\overline B_L(0)\), which proves
\eqref{eq:width-Lip-bound}. Finally, recall that the weak gradient of a Lipschitz function satisfies
\[
\nabla f_\varepsilon(x)
=\int\nabla f(x-z)\zeta_\varepsilon(z)\dd z
\]
for every \(x\). For almost every \(z\), the integrand belongs to \(\cK_f\), which implies that \(\nabla f_\varepsilon(x)\in\cK_f\) and \(\cK_{f_\varepsilon}\subseteq\cK_f\). The support-function identity now gives \(\omega_{f_\varepsilon}\le\omega_f\).
\end{proof}

We set
\begin{equation}\label{eq:matrix-change-variables}
\widetilde x=G^{1/2}x,
\qquad
\widetilde y=G^{-1/2}y,
\end{equation}
and define the transformed reference and perturbation potentials by
\begin{align*}
\widetilde V(\widetilde x)
&:=
V(G^{-1/2}\widetilde x),
&
\widetilde a(\widetilde x)
&:=
a(G^{-1/2}\widetilde x),
\\
\widetilde W(\widetilde y)
&:=
W(G^{1/2}\widetilde y),
&
\widetilde b(\widetilde y)
&:=
b(G^{1/2}\widetilde y).
\end{align*}
It is possible to absorb the constant Jacobian factors into the normalising constants. The transformed potential is
\begin{equation}\label{eq:matrix-scaled-potential}
\widetilde\Phi(\widetilde x)
:=\Phi(G^{-1/2}\widetilde x).
\end{equation}
We then have
\begin{equation}\label{eq:matrix-scaled-map}
\nabla\widetilde\Phi(\widetilde x)
=G^{-1/2}T(G^{-1/2}\widetilde x),
\qquad
D^2\widetilde\Phi(\widetilde x)
=G^{-1/2}D^2\Phi(G^{-1/2}\widetilde x)G^{-1/2}.
\end{equation}
The map in \eqref{eq:matrix-scaled-map} pushes the transformed source measure
forward to the transformed target measure. As \(\widetilde\Phi\) is convex, Brenier--McCann uniqueness \cite{Brenier,McCann} identifies \(\nabla\widetilde\Phi\) with the quadratic-cost Brenier map for the transformed pair. The transformed reference potentials also satisfy
\begin{equation}\label{eq:balanced-curvature}
D^2\widetilde V\preceq\mathsf R,
\qquad
D^2\widetilde W\succeq\mathsf R.
\end{equation}
The associated width gauges are
\begin{equation}\label{eq:balanced-widths}
\omega_{\widetilde a}(h)=\omega_a(G^{-1/2}h),
\qquad
\omega_{\widetilde b}(h)=\omega_b(G^{1/2}h).
\end{equation}
It now suffices to prove \Cref{thm:spectral-caffarelli} in the balanced coordinates, where the source and target share the curvature matrix \(\mathsf R\). Up to and including \eqref{eq:balanced-final-spectral}, we use the symbols \(V,W,a,b,T,\Phi\) and the corresponding marginals for these objects with balanced coordinates.

\subsection{Regularity and large-scale coercivity}
\label{subsec:regularity-large-scale-coercivity}

\begin{proposition}[Regularity on the full space]
\label{prop:smooth-diffeo-general}
Let \(V,W,a,b\in C^\infty(\R^d)\) be finite, and assume that the measures with densities proportional to \(\e^{-V-a}\) and \(\e^{-W-b}\) have finite second moments. It follows that the Brenier potential may be chosen finite on all of \(\R^d\), that
\[
T=\nabla\Phi:\R^d\longrightarrow\R^d
\]
is a \(C^\infty\) diffeomorphism, and that \(D^2\Phi(x)\succ0\) for every \(x\).
\end{proposition}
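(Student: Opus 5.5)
The plan is to combine the global structure coming from full support with interior Caffarelli regularity theory for the Monge--Amp\`ere equation, applied to both \(\Phi\) and its Legendre transform \(\Phi^*\).

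\textbf{Step 1: finiteness.} Set \(f:=Z_{\mu,a}^{-1}\e^{-V-a}\) and \(g:=Z_{\nu,b}^{-1}\e^{-W-b}\). Both are smooth and strictly positive on \(\R^d\), so they are bounded above and below by positive constants on every compact set. By Brenier--McCann, \(T=\nabla\Phi\) exists with \(\Phi\) convex and lower semicontinuous. Since \(\mu_a\) charges every open set, \(\Phi\) is finite \(\mu_a\)-a.e., hence on a dense set. A convex function is finite on the convex hull of its finiteness domain, so \(\Phi\) is finite on all of \(\R^d\). By symmetry, the reverse Brenier potential \(\Phi^*\), which sends \(\nu_b\) to \(\mu_a\), is also finite on \(\R^d\).

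\textbf{Step 2: local boundedness of the gradient.} Finiteness everywhere makes \(\Phi\) locally Lipschitz. Hence \(\partial\Phi(K)\) is bounded for every compact \(K\), and likewise \(\partial\Phi^*(K')\) is bounded. This is the substitute for compactness of the domains.

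\textbf{Step 3: \(C^{1,\alpha}\) and strict convexity.} \(\Phi\) solves \(\det D^2\Phi=f/g(\nabla\Phi)\) in the Brenier sense. The target support \(\R^d\) is convex, so this can be upgraded to an Alexandrov solution with right-hand side locally pinched between positive constants. This is exactly the setting of the Cordero-Erausquin--Figalli regularity theorem for monotone transport between unbounded domains. Alternatively, one can localise by hand: fix a ball \(B\), note that \(\nabla\Phi(B)\) is contained in a bounded set by Step 2, and apply Caffarelli's theory on a large ball containing it. The conclusion is that \(\Phi\) is strictly convex and \(C^{1,\alpha}_{\mathrm{loc}}\). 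The same argument applies to \(\Phi^*\).

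\textbf{Step 4: \(C^\infty\) regularity.} Once \(\nabla\Phi\) is \(C^{0,\alpha}\), the right-hand side \(f/g(\nabla\Phi)\) is \(C^{0,\alpha}_{\mathrm{loc}}\) and positive. Caffarelli's interior \(C^{2,\alpha}\) estimates for strictly convex solutions then give \(\Phi\in C^{2,\alpha}_{\mathrm{loc}}\), so the equation is uniformly elliptic on compact sets. Schauder bootstrapping then yields \(\Phi\in C^\infty\).

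\textbf{Step 5: diffeomorphism.} Since \(D^2\Phi\succeq0\) and \(\det D^2\Phi=f/g(\nabla\Phi)>0\), we get \(D^2\Phi(x)\succ0\) for every \(x\). Both \(\Phi\) and \(\Phi^*\) are finite, strictly convex and \(C^1\) on \(\R^d\), so \(\nabla\Phi^*\circ\nabla\Phi=\Id\) and \(\nabla\Phi\circ\nabla\Phi^*=\Id\) hold everywhere. Hence \(T\) is a bijection of \(\R^d\), and the inverse function theorem makes \(T\) a \(C^\infty\) diffeomorphism.

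\textbf{Main obstacle.} The delicate point is Step 3. Caffarelli's strict convexity argument requires the target to be convex and the densities to be controlled on the relevant sections. On \(\R^d\) one must check that sections of \(\Phi\) are bounded, so that only compact regions of the target are involved. I would first try to invoke Cordero-Erausquin--Figalli directly. Failing that, I would show boundedness of sections from the finiteness of \(\Phi^*\): a line segment on which \(\Phi\) is affine would force \(\partial\Phi^*\) to be multivalued on a set of positive \(\nu_b\)-measure, contradicting the fact that \(\nabla\Phi^*\) pushes \(\nu_b\) to the absolutely continuous measure \(\mu_a\).
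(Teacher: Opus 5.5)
Your proposal is correct and is essentially the paper's argument. The paper also rests on Cordero-Erausquin--Figalli's Corollary~1, from which it reads off both the $C^{k+1,\alpha}_{\mathrm{loc}}$ regularity (replacing your Caffarelli--Schauder bootstrap) and the global homeomorphism (replacing your Legendre-duality argument with $\Phi,\Phi^*$ finite and $C^1$). It then extends the Monge--Amp\`ere identity to every point by continuity to get $D^2\Phi\succ0$, concludes with the inverse function theorem, and treats $d=1$ separately by monotone rearrangement.

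The fallback strict-convexity sketch in your last paragraph would not work as stated: an affine segment of $\Phi$ only makes $\partial\Phi^*$ multivalued at a single point, which is a $\nu_b$-null set. It is not needed once you cite that corollary.
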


\begin{proof}
In dimension one, the conclusion follows from monotone rearrangement and the
inverse-function theorem. Let us now assume that \(d\ge2\). Notice that both densities
are smooth and strictly positive. On every compact subset of \(\R^d\), the densities and their reciprocals are bounded, while both supports equal \(\R^d\). We may now apply \cite[Corollary 1]{CorderoFigalli}, which gives the global homeomorphism
\[
T=\nabla\Phi:\R^d\longrightarrow\R^d.
\]
For every integer \(k\ge0\) and every \(\alpha\in(0,1)\), the densities belong to
\(C_{\mathrm{loc}}^{k,\alpha}(\R^d)\). The local regularity part of the same corollary then gives
\[
T\in C_{\mathrm{loc}}^{k+1,\alpha}(\R^d,\R^d).
\]
If we vary the differentiability order, we have \(T\in C^\infty(\R^d,\R^d)\). Since \(T=\nabla\Phi\) and \(\Phi\) is convex, we obtain \(D^2\Phi\succeq0\). The Monge--Amp\`ere identity
\[
\det D^2\Phi(x)
=
c\,\exp\!\bigl(
-V(x)-a(x)+W(T(x))+b(T(x))
\bigr),
\qquad c>0,
\]
holds almost everywhere. By continuity of both sides, it extends to every
point. Its right-hand side is strictly positive, so \(D^2\Phi(x)\succ0\) for every \(x\in\R^d\). This implies that \(T\) is a smooth local diffeomorphism. We now combine this result with the global homeomorphism above, and conclude that \(T\) is a smooth diffeomorphism of \(\R^d\).
\end{proof}

Let us now set
\begin{equation}\label{eq:R-eigenvalues}
\rho:=\lambda_{\min}(\mathsf R),
\qquad
\overline\rho:=\lambda_{\max}(\mathsf R),
\end{equation}
and write
\begin{equation}\label{eq:auxiliary-Euclidean-Lipschitz}
L_a:=\Lip(a),
\qquad
L_b:=\Lip(b).
\end{equation}

\begin{lemma}[Large-scale Brenier modulus]\label{lem:global-modulus}
Assume that \(V,W,a,b\) are smooth, \(a,b\) are globally Lipschitz, that the measures with densities proportional to \(\e^{-V-a}\) and \(\e^{-W-b}\) are probability measures with finite second moments, and that
\begin{equation}\label{eq:balanced-standing}
D^2V\preceq\mathsf R,
\qquad
D^2W\succeq\mathsf R.
\end{equation}
It follows that there exist finite constants \(c_0,c_1\), which depend only on \(\mathsf R,L_a,L_b\), such that
\begin{equation}\label{eq:global-affine-modulus}
\norm{T(x)-T(y)}\le c_0+c_1\norm{x-y}
\qquad(x,y\in\R^d).
\end{equation}
For the translated corrector
\begin{equation}\label{eq:G-def-prelim}
\mathfrak B_m(x):=\Phi(x+m)-\Phi(x)-\ip{m}{T(x)},
\end{equation}
we also obtain
\begin{equation}\label{eq:G-global-growth}
0\le\mathfrak B_m(x)
\le c_0\norm m+\frac{c_1}{2}\norm m^2.
\end{equation}
\end{lemma}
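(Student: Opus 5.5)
The plan is to compare \(T\) with the Brenier map of a semiconvex/semiconcave regularisation of the pair, for which Caffarelli's theorem applies. The comparison map is globally Lipschitz, and we then transfer its Lipschitz bound to \(T\) up to an additive error. Fix \(s>0\) with \(1/s\le\rho/2\). Replace \(a\) by its Moreau inf-convolution
\[
a_s(x):=\inf_z\Bigl(a(z)+\tfrac{1}{2s}\norm{x-z}^2\Bigr),
\]
and \(b\) by the sup-convolution \(b^s\). Since \(a,b\) are Lipschitz, these satisfy:
\begin{itemize}[label=\(\cdot\)]
\item \(D^2a_s\preceq s^{-1}\Id\) and \(D^2b^s\succeq -s^{-1}\Id\);
\item \(\Lip(a_s)\le L_a\) and \(\Lip(b^s)\le L_b\);
\item \(0\le a-a_s\le L_a^2s/2\) and \(0\le b^s-b\le L_b^2s/2\).
\end{itemize}
The regularised pair \(\e^{-V-a_s}\), \(\e^{-W-b^s}\) therefore satisfies \(D^2(V+a_s)\preceq(\overline\rho+s^{-1})\Id\) and \(D^2(W+b^s)\succeq(\rho-s^{-1})\Id\succeq(\rho/2)\Id\). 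Caffarelli's theorem (in the scalar form recalled in the introduction) then gives a Brenier map \(T_s=\nabla\Phi_s\) with \(\Lip(T_s)\le c_1:=\sqrt{2(\overline\rho+s^{-1})/\rho}\). After mollification, \Cref{prop:smooth-diffeo-general} lets us work with smooth diffeomorphisms throughout.

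The second step is to show that \(\sup_x\norm{\Phi(x)-\Phi_s(x)-\ell(x)}\le K\) for some affine \(\ell\), with \(K=K(\mathsf R,L_a,L_b)\). The two source densities differ by a factor in \([\e^{-L_a^2s},\e^{L_a^2s}]\), and similarly for the targets. I would first try a comparison argument for the Monge--Amp\`ere equations
\[
\det D^2\Phi=c\,\e^{-V-a+(W+b)\circ\nabla\Phi},
\qquad
\det D^2\Phi_s=c_s\,\e^{-V-a_s+(W+b^s)\circ\nabla\Phi_s}.
\]
Here the right-hand sides differ by bounded multiplicative factors, and the strong convexity \(D^2W\succeq\mathsf R\) of the target gives monotone dependence on \(\nabla\Phi\). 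The comparison would use barriers \(\Phi_s\pm(\varepsilon\norm{x}^2+C)\) and a maximum principle at the extremum of \(\Phi-\Phi_s\), letting \(\varepsilon\downarrow0\) with the help of the finite second moments.

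If this fails, the fallback is Kantorovich duality. Both dual problems have the same cost and marginals with bounded log-ratio. Strong log-concavity of the target gives a transport--entropy (Talagrand) inequality, which should bound the oscillation of the dual potentials, modulo affine terms, by a constant depending only on \(\rho,L_a,L_b\). \textbf{This sup-norm stability step is the main obstacle}, since Brenier potentials are not stable in \(L^\infty\) under bounded density changes without curvature. The role of \(D^2W\succeq\mathsf R\) is exactly to provide that stability.

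Given the sup bound, a standard convexity lemma finishes the modulus. If \(\Phi\) is convex, \(\Psi:=\Phi_s+\ell\) satisfies \(D^2\Psi\preceq c_1\Id\), and \(\norm{\Phi-\Psi}\le K\), then \(\norm{\nabla\Phi(x)-\nabla\Psi(x)}\le2\sqrt{2c_1K}\) for every \(x\). To see this, compare the supporting-plane inequality for \(\Phi\) at \(x\) with the quadratic upper bound for \(\Psi\) along the direction \(\nabla\Phi(x)-\nabla\Psi(x)\), at step length \(\sqrt{2K/c_1}\). Hence
\[
\norm{T(x)-T(y)}\le\norm{T_s(x)-T_s(y)}+4\sqrt{2c_1K}\le c_1\norm{x-y}+c_0,
\]
with \(c_0:=4\sqrt{2c_1K}\).

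Finally, for \eqref{eq:G-global-growth}, write
\[
\mathfrak B_m(x)=\int_0^1\ip{T(x+tm)-T(x)}{m}\dd t .
\]
This is nonnegative by monotonicity of \(\nabla\Phi\), and \eqref{eq:global-affine-modulus} bounds it by
\[
\int_0^1\bigl(c_0+c_1t\norm m\bigr)\norm m\dd t=c_0\norm m+\tfrac{c_1}{2}\norm m^2 .
\]
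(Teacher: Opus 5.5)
There is a genuine gap, and it is exactly the step you flagged. The regularisation, the Caffarelli bound \(\Lip(T_s)\le c_1\), the convexity lemma converting closeness of potentials into closeness of gradients, and the final integration giving \eqref{eq:G-global-growth} are all fine; the last is the same computation as in the paper. The problem is the global estimate \(\sup_x\lvert\Phi-\Phi_s-\ell\rvert\le K\). It is false in general, so neither the Monge--Amp\`ere comparison nor the duality/Talagrand route can produce it.

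For a counterexample, take \(d=1\), \(V=W=t^2/2\), \(a=0\), and \(b(t)=-L\sqrt{1+t^2}\).
\begin{enumerate}[label=(\roman*)]
\item We have \(b\le b^s\le b+L^2s/2\), with \(b^s-b\to L^2s/2\) at \(\pm\infty\) but \(b^s(0)=b(0)\). Hence the normalised target densities have tail ratio \(c=\e^{-L^2s/2}Z/Z_s\in(0,1)\), where \(Z,Z_s\) are the two normalising constants.
\item Combining \(\bar F_{\nu_s}(T_s(x))=\bar F_\gamma(x)=\bar F_\nu(T(x))\) with \((\log\bar F_\nu)'(y)\sim-y\) gives \(T_s(x)-T(x)\sim(\log c)/x\) as \(\lvert x\rvert\to\infty\).
\item Integrating, \(\Phi_s-\Phi=(\log c+o(1))\log\lvert x\rvert\) at both ends. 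This even, logarithmically divergent function stays at unbounded distance from every affine \(\ell\).
\end{enumerate}
There are also structural reasons the two routes fail. At an extremum of \(\Phi-\Phi_s\) the gradients coincide, so target monotonicity only yields a relation among the constants. Talagrand's inequality gives averaged \(W_2\) control, not \(L^\infty\) control of potentials. What your scheme actually needs is \(\sup_x\lvert T-T_s\rvert<\infty\), or a scale-one local version of the potential bound with an \(x\)-dependent affine correction. Proving either is no easier than the lemma itself.

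The paper needs no comparison map. It observes that the Lipschitz perturbations only add a defect linear in \(r\) to the two-point inequalities along segments. Thus \(U_\mu=V+a+\log Z_{\mu,a}\) is \(\sigma_\mu\)-smooth with \(\sigma_\mu(r)=\tfrac{\overline\rho}{2}r^2+2L_ar\), and \(U_\nu=W+b+\log Z_{\nu,b}\) is \(\rho_\nu\)-convex with \(\rho_\nu(r)=\tfrac{\rho}{2}r^2-2L_br\). Gozlan and Sylvestre's Caffarelli-type theorem for such nonlinear profiles \cite[Corollary 4.3]{GS} then gives directly \(\lvert T(x)-T(y)\rvert\le 4L_b/\rho+4\sqrt{L_b^2/\rho^2+L_ar/\rho}+\sqrt{\overline\rho/\rho}\,r\), which is affine at large scale. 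That contraction principle for non-quadratic convexity and smoothness moduli is the ingredient missing from your plan.
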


\begin{proof}
We write
\[
Z_{\mu,a}
:=
\int_{\R^d}\e^{-V(x)-a(x)}\dd x,
\qquad
Z_{\nu,b}
:=
\int_{\R^d}\e^{-W(y)-b(y)}\dd y,
\]
and set
\[
U_\mu:=V+a+\log Z_{\mu,a},
\qquad
U_\nu:=W+b+\log Z_{\nu,b}.
\]
Let us fix \(x_0,x_1\in\R^d\). For \(t\in[0,1]\), we set
\[
x_t:=(1-t)x_0+tx_1,
\qquad
r:=\norm{x_1-x_0}.
\]
The curvature and Lipschitz bounds imply
\begin{align*}
U_\mu(x_t)
&\ge
(1-t)U_\mu(x_0)+tU_\mu(x_1)
-t(1-t)\left(
\frac{\overline\rho}{2}r^2+2L_ar
\right),\\
U_\nu(x_t)
&\le
(1-t)U_\nu(x_0)+tU_\nu(x_1)
-t(1-t)\left(
\frac{\rho}{2}r^2-2L_br
\right).
\end{align*}
We define
\[
\sigma_\mu(r)
:=
\frac{\overline\rho}{2}r^2+2L_ar,
\qquad
\rho_\nu(r)
:=
\frac{\rho}{2}r^2-2L_br.
\]
We follow the terminology of \cite[Section 2]{GS}, and say that the normalised total potential \(U_\mu\) is \(\sigma_\mu\)-smooth, while \(U_\nu\) is \(\rho_\nu\)-convex. The
measures \(\e^{-U_\mu}\dd x\) and \(\e^{-U_\nu}\dd y\) are probabilities, so we may apply \cite[Corollary 4.3]{GS} directly. The calculation in the proof of \cite[Theorem 5.16, equation (31)]{GS} then gives, for \(x,y\in\R^d\) and \(r:=\norm{x-y}\),
\[
\norm{T(x)-T(y)}
\le
\frac{4L_b}{\rho}
+
4\sqrt{
\frac{L_b^2}{\rho^2}
+
\frac{L_a}{\rho}r
}
+
\sqrt{\frac{\overline\rho}{\rho}}\,r.
\]
For \(u,v\ge0\), we use
\[
\sqrt{u+v}\le\sqrt u+\sqrt v
\qquad\text{and}\qquad
\sqrt r\le\frac{1+r}{2},
\]
and infer that
\[
\norm{T(x)-T(y)}
\le c_0+c_1r,
\]
for suitable finite \(c_0,c_1\), which depend only on \(\mathsf R,L_a,L_b\). Convexity gives \(\mathfrak B_m\ge0\). On the other hand,
\[
\mathfrak B_m(x)
=
\int_0^1
\ip{T(x+tm)-T(x)}{m}\dd t.
\]
We apply \eqref{eq:global-affine-modulus} with \(y=x+tm\) and integrate in \(t\), which gives
\[
0\le\mathfrak B_m(x)
\le c_0\norm m+\frac{c_1}{2}\norm m^2.
\]
The lemma follows immediately.
\end{proof}

For a finite convex function \(\Psi:\R^d\to\R\), we use
\[
\Psi^*(y):=\sup_{x\in\R^d}
\bigl\{\ip{x}{y}-\Psi(x)\bigr\}
\]
for its Fenchel conjugate, and use \(\partial\Psi\) for its convex subdifferential.

\begin{lemma}[Onto gradient implies supercoercivity]\label{lem:supercoercive}
Let \(\Psi:\R^d\to\R\) be finite, differentiable, and convex. If \(\nabla\Psi(\R^d)=\R^d\), then
\begin{equation}\label{eq:supercoercive}
\frac{\Psi(x)}{\norm x}\longrightarrow+\infty
\qquad(\norm x\to\infty).
\end{equation}
\end{lemma}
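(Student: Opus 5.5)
Fix \(R>0\) and set \(M_R:=\sup_{\norm y\le R}\bigl(\Psi^*(y)\bigr)\). The aim is to show
\[
\Psi(x)\ge R\norm x-M_R\qquad\text{for all }x\in\R^d.
\]
Dividing by \(\norm x\) and letting first \(\norm x\to\infty\) and then \(R\to\infty\) gives \eqref{eq:supercoercive}. So the plan has two parts: show that \(\Psi^*\) is finite and bounded on every closed ball, and then read off the linear lower bound from the Fenchel--Young inequality.

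For finiteness, take any \(y\in\R^d\). Surjectivity of \(\nabla\Psi\) gives some \(z\in\R^d\) with \(\nabla\Psi(z)=y\). By convexity and differentiability, the tangent-plane inequality
\[
\Psi(x)\ge\Psi(z)+\ip{y}{x-z}
\]
holds for every \(x\). Hence \(\ip{x}{y}-\Psi(x)\le\ip{z}{y}-\Psi(z)\) for all \(x\), so
\[
\Psi^*(y)=\ip{z}{y}-\Psi(z)<\infty.
\]
Also \(\Psi^*(y)\ge-\Psi(0)>-\infty\). Thus \(\Psi^*\) is a finite convex function on all of \(\R^d\).

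The main point is that a finite convex function on \(\R^d\) is continuous. Consequently \(\Psi^*\) is bounded above on the compact ball \(\overline B_R(0)\), and \(M_R<\infty\). This is the only place where more than a one-line argument is needed. If I wanted to avoid quoting continuity, I would bound \(\Psi^*\) on the ball by its values at the \(2d\) vertices \(\pm R\sqrt d\,e_i\) of a cross-polytope containing \(\overline B_R(0)\), which works by convexity alone.

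Finally, apply Fenchel--Young with \(y=Rx/\norm x\) for \(x\ne0\):
\[
\Psi(x)\ge\ip{x}{y}-\Psi^*(y)\ge R\norm x-M_R.
\]
Then \(\liminf_{\norm x\to\infty}\Psi(x)/\norm x\ge R\). Since \(R\) is arbitrary, this proves the claim.
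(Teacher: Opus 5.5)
Your proposal is correct and follows essentially the same route as the paper. Both arguments use surjectivity of \(\nabla\Psi\) to show that \(\Psi^*\) is finite everywhere, hence continuous and bounded on balls. Both then apply Fenchel--Young with \(y=Rx/\norm x\) to get \(\Psi(x)\ge R\norm x-M_R\). Your tangent-plane justification of the Fenchel equality and the cross-polytope bound are valid elaborations of steps the paper leaves implicit.
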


\begin{proof}
Let us fix \(y\in\R^d\) and choose \(x_y\in\R^d\) such that \(\nabla\Psi(x_y)=y\). By the Fenchel equality,
\[
\Psi^*(y)=\ip{x_y}{y}-\Psi(x_y)<\infty.
\]
We deduce that \(\Psi^*\) is finite, convex, and continuous. For \(R>0\), set
\[
M_R:=\sup_{\norm y\le R}\Psi^*(y)<\infty.
\]
If \(x\ne0\), Fenchel duality with \(y=Rx/\norm x\) gives \(\Psi(x)\ge R\norm x-M_R\). We obtain
\[
\liminf_{\norm x\to\infty}\frac{\Psi(x)}{\norm x}\ge R.
\]
Since \(R>0\) is arbitrary, \eqref{eq:supercoercive} holds.
\end{proof}

\begin{lemma}[Closure of Hessian bounds]\label{lem:C11}
Let \(\Psi:\R^d\to\R\) be finite and convex.
\begin{enumerate}[label=(\roman*),ref=(\roman*)]
\item\label{lem:C11-upper} Let us assume that, for some \(M\in\mathrm{Sym}_d^{++}\),
\begin{equation}\label{eq:matrix-second-difference-upper}
\Psi(x+h)+\Psi(x-h)-2\Psi(x)\le\ip{Mh}{h}
\end{equation}
for all \(x,h\). It follows that \(\Psi\in C^{1,1}(\R^d)\) and \(D^2\Psi\preceq M\) in distributions and almost everywhere.
\item\label{lem:C11-lower} If, in addition, the reverse inequality holds with some \(N\in\mathrm{Sym}_d^{++}\), then \(D^2\Psi\succeq N\) and
\[
\ip{\nabla\Psi(x)-\nabla\Psi(y)}{x-y}
\ge\ip{N(x-y)}{x-y}
\qquad(x,y\in\R^d).
\]
\end{enumerate}
\end{lemma}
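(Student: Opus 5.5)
The plan is to reduce everything to the one-variable convex function \(g(s):=\Psi(x+sh)\) for fixed \(x,h\), and then translate back. Hypothesis \eqref{eq:matrix-second-difference-upper} states that \(g(s+t)+g(s-t)-2g(s)\le t^2\ip{Mh}{h}\) for all \(s,t\). Therefore the function \(q(s):=\tfrac12\ip{Mh}{h}s^2-g(s)\) has nonnegative symmetric second differences. A continuous function with this property is midpoint convex, hence convex (the standard Jensen argument). So \(s\mapsto\tfrac12\ip{Mh}{h}s^2-\Psi(x+sh)\) is convex along every line. The same argument applied to the sign-reversed inequality in part \ref{lem:C11-lower} shows that \(s\mapsto\Psi(x+sh)-\tfrac12\ip{Nh}{h}s^2\) is convex along every line.

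For part \ref{lem:C11-upper}, I would first derive the distributional statement. Pair the second difference with a nonnegative \(\varphi\in C_c^\infty\) and change variables \(x\mapsto x\pm th\). Dividing by \(t^2\) and letting \(t\to0\) gives \(\langle D_h^2\Psi,\varphi\rangle\le\ip{Mh}{h}\int\varphi\), which is \(D^2\Psi\preceq M\) in distributions.

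For \(C^{1,1}\), I would use the concavity of \(\tfrac12\ip{M\cdot}{\cdot}-\Psi\) along lines, combined with the convexity of \(\Psi\). The function \(x\mapsto\tfrac12\ip{Mx}{x}-\Psi(x)\) is concave along every line. A function that is continuous and concave on every line is concave, since convexity is a line property. So \(\Psi\) is both convex and \(M\)-semiconcave. At each point, the supporting affine function from below (coming from convexity) and the supporting quadratic from above (coming from semiconcavity) force \(\partial\Psi(x)\) to be a singleton. Hence \(\Psi\) is differentiable everywhere, with
\[
0\le\Psi(x+h)-\Psi(x)-\ip{\nabla\Psi(x)}{h}\le\tfrac12\ip{Mh}{h}.
\]
The main step is then the standard argument that this two-sided Bregman bound implies \(\nabla\Psi\) is Lipschitz. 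This is the co-coercivity argument for convex functions with \(M\)-smoothness. Apply the bound to the convex function \(\Psi_x(z):=\Psi(z)-\ip{\nabla\Psi(x)}{z}\), which is minimised at \(x\). Evaluating at the point \(z=y-M^{-1}(\nabla\Psi(y)-\nabla\Psi(x))\) gives
\[
\Psi_x(x)\le\Psi_x(y)-\tfrac12\norm{\nabla\Psi(y)-\nabla\Psi(x)}_{M^{-1}}^2 .
\]
Symmetrising in \(x,y\) and adding yields \(\norm{\nabla\Psi(x)-\nabla\Psi(y)}_{M^{-1}}^2\le\ip{\nabla\Psi(x)-\nabla\Psi(y)}{x-y}\). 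Cauchy--Schwarz in the \(M\)/\(M^{-1}\) pairing then gives \(\norm{\nabla\Psi(x)-\nabla\Psi(y)}_{M^{-1}}\le\norm{x-y}_M\), so \(\nabla\Psi\) is Lipschitz and \(\Psi\in C^{1,1}\). By Rademacher's theorem, \(\nabla\Psi\) is differentiable almost everywhere. There the distributional Hessian equals the pointwise Hessian (a Lipschitz function's distributional derivative is its a.e. derivative), so \(D^2\Psi\preceq M\) also holds almost everywhere.

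For part \ref{lem:C11-lower}, the same pairing and limit argument gives \(D^2\Psi\succeq N\) in distributions, and almost everywhere via the a.e. Hessian. For the monotonicity inequality, note that \(\Psi-\tfrac12\ip{N\cdot}{\cdot}\) is convex, by the line-convexity shown above. It is also differentiable, with gradient \(\nabla\Psi-N\). Monotonicity of the gradient of a differentiable convex function then gives \(\ip{\nabla\Psi(x)-Nx-\nabla\Psi(y)+Ny}{x-y}\ge0\), which is the claimed inequality.

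The only delicate point I anticipate is passing from second-difference inequalities to convexity and concavity. This requires the continuity of \(\Psi\), which holds because \(\Psi\) is finite and convex on \(\R^d\).
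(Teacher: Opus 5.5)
Your argument is correct, but the \(C^{1,1}\) step takes a genuinely different route from the paper's.

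The paper works distributionally throughout. With \(q_M(x)=\ip{Mx}{x}/2\), it notes that \(q_M-\Psi\) is continuous and midpoint convex, hence convex, so \(0\preceq D^2\Psi\preceq M\) in distributions. Since \(\Psi\) is convex, each \(D_v^2\Psi\) is a nonnegative Radon measure dominated by \(\ip{Mv}{v}\,\dd x\). By polarisation, \(D^2\Psi=A(x)\,\dd x\) with \(A\in L^\infty\) and \(0\preceq A\preceq M\) a.e. Then \(\Psi\in W^{2,\infty}_{\mathrm{loc}}\), the weak gradient has a globally Lipschitz representative, and convexity identifies it with the classical gradient.

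You instead stay at the level of functions. You get the distributional inequality by pairing second differences with test functions, and you obtain differentiability from the affine minorant and the quadratic majorant. You then run the co-coercivity (Baillon--Haddad) argument to get \(\norm{\nabla\Psi(x)-\nabla\Psi(y)}_{M^{-1}}^2\le\ip{\nabla\Psi(x)-\nabla\Psi(y)}{x-y}\).

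\emph{What your route buys.} It avoids the measure-theoretic facts: positive distributions as measures, absolute continuity, and the Sobolev-to-Lipschitz identification. It also delivers at once the metric bound \(\norm{\nabla\Psi(x)-\nabla\Psi(y)}_{M^{-1}}\le\norm{x-y}_M\), which the paper proves separately in \Cref{lem:hessian-bound-metric}, and even the stronger co-coercivity inequality.

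\emph{What it costs.} The almost-everywhere statement is no longer automatic. You need Rademacher's theorem plus a short step from the distributional inequality to a pointwise one. Either use Lebesgue points and a countable dense set of directions, or use a second-order Taylor expansion at points where \(\nabla\Psi\) is differentiable and compare it with your Bregman bound. The paper simply reads the a.e. bound off from the density \(A\).

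\emph{One slip to correct.} In the \(C^{1,1}\) paragraph you say that \(\tfrac12\ip{Mx}{x}-\Psi(x)\) is concave along lines. What you proved in your first step, and what the quadratic majorant requires, is that it is convex, i.e. \(\Psi-\tfrac12\ip{M\cdot}{\cdot}\) is concave.
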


\begin{proof}
We set \(q_M(x):=\ip{Mx}{x}/2\) and \(F:=q_M-\Psi\). Recall that the inequality
\eqref{eq:matrix-second-difference-upper} says that \(F\) is midpoint convex.
Since \(F\) is continuous, it is convex, so we have
\[
0\preceq D^2\Psi\preceq M\preceq\opnorm{M}\Id
\]
in the sense of matrix-valued distributions. Notice that the convexity of \(\Psi\) implies that every directional second derivative \(D_v^2\Psi\) is a nonnegative Radon measure. We combine the upper bound
\[
D_v^2\Psi\le\ip{Mv}{v}\,\dd x
\]
with polarisation to obtain
\[
D^2\Psi=A(x)\,\dd x
\]
for some \(A\in L^\infty(\R^d,\mathrm{Sym}_d)\) such that \(0\preceq A(x)\preceq M\) almost everywhere. We infer that \(\Psi\in W_{\mathrm{loc}}^{2,\infty}(\R^d)\) and that its first weak derivatives admit globally Lipschitz representatives. Observe that convexity identifies these representatives with the classical gradient of \(\Psi\). We conclude that \(\Psi\in C^{1,1}(\R^d)\), and \(0\preceq D^2\Psi\preceq M\) both almost
everywhere and in the sense of distributions. This establishes \ref{lem:C11-upper}.

Let us now assume that the reverse second-difference inequality holds with
\(N\), and set \(q_N(x):=\ip{Nx}{x}/2\). The midpoint-convex function \(\Psi-q_N\) is continuous and thus convex. It follows that \(D^2\Psi\succeq N\) distributionally and almost everywhere. Finally, the monotonicity of \(\nabla(\Psi-q_N)\) gives
\[
\ip{\nabla\Psi(x)-\nabla\Psi(y)}{x-y}
\ge\ip{N(x-y)}{x-y}.
\]
This proves \ref{lem:C11-lower}.
\end{proof}

\begin{lemma}[Metric estimate from a Hessian bound]
\label{lem:hessian-bound-metric}
Let \(\Psi\in C^{1,1}(\R^d)\) be convex, and let \(M\in\mathrm{Sym}_d^{++}\). If \(0\preceq D^2\Psi\preceq M\) almost everywhere, then
\begin{equation}\label{eq:hessian-bound-metric}
\norm{\nabla\Psi(x)-\nabla\Psi(y)}_{M^{-1}}
\le\norm{x-y}_M
\qquad(x,y\in\R^d).
\end{equation}
\end{lemma}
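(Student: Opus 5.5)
We reduce to the Euclidean case by a linear change of variables. Put \(\Psi_M(z):=\Psi(M^{-1/2}z)\). Then \(\Psi_M\) is convex and lies in \(C^{1,1}(\R^d)\), with
\[
\nabla\Psi_M(z)=M^{-1/2}\nabla\Psi(M^{-1/2}z),
\qquad
D^2\Psi_M(z)=M^{-1/2}D^2\Psi(M^{-1/2}z)M^{-1/2}
\]
almost everywhere. The hypothesis \(0\preceq D^2\Psi\preceq M\) therefore becomes \(0\preceq D^2\Psi_M\preceq\Id\) almost everywhere.

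For \(z=M^{1/2}x\) and \(w=M^{1/2}y\) we have \(\norm{z-w}=\norm{x-y}_M\). We also have
\[
\norm{\nabla\Psi_M(z)-\nabla\Psi_M(w)}
=\norm{M^{-1/2}(\nabla\Psi(x)-\nabla\Psi(y))}
=\norm{\nabla\Psi(x)-\nabla\Psi(y)}_{M^{-1}}.
\]
So \eqref{eq:hessian-bound-metric} is equivalent to the statement that \(\nabla\Psi_M\) is \(1\)-Lipschitz in the Euclidean norm.

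To prove this, note that \(\nabla\Psi_M\) is Lipschitz and hence absolutely continuous on segments. Fubini gives, for almost every pair \((z,w)\), that the segment between them meets the null set where \(D^2\Psi_M\) fails to exist or to satisfy the bound only in a set of one-dimensional measure zero. For such pairs,
\[
\nabla\Psi_M(z)-\nabla\Psi_M(w)=\int_0^1 D^2\Psi_M(w+t(z-w))(z-w)\dd t.
\]
Since \(0\preceq D^2\Psi_M\preceq\Id\) almost everywhere along the segment, each integrand has operator norm at most \(1\). The triangle inequality for the integral then gives \(\norm{\nabla\Psi_M(z)-\nabla\Psi_M(w)}\le\norm{z-w}\). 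Because \(\nabla\Psi_M\) is continuous, the inequality extends from almost every pair to all pairs.

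The one delicate point is this passage from almost-everywhere Hessian bounds to bounds along every segment. There is a cleaner route that avoids Fubini. The operator \(A(x)=D^2\Psi_M(x)\) satisfies \(0\preceq A\preceq\Id\) almost everywhere. Mollify to get \(\Psi_{M,\varepsilon}:=\Psi_M*\zeta_\varepsilon\). This function is smooth and convex, and \(D^2\Psi_{M,\varepsilon}=A*\zeta_\varepsilon\) also lies between \(0\) and \(\Id\) everywhere. Apply the segment argument to \(\Psi_{M,\varepsilon}\) at every pair of points, then let \(\varepsilon\to0\) using the locally uniform convergence of \(\nabla\Psi_{M,\varepsilon}\) to \(\nabla\Psi_M\). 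This mollification route is the one to use in the write-up.
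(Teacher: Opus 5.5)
Your proof is correct and is essentially the paper's argument. The paper integrates $D^2\Psi$ along the segment from $y$ to $x$ and uses the pointwise bound $\norm{Av}_{M^{-1}}\le\norm{v}_M$ for $0\preceq A\preceq M$, which is your conjugation to $0\preceq D^2\Psi_M\preceq\Id$ carried out inside the integral. Your Fubini-plus-continuity (or mollification) step also justifies the segment formula for every pair of points, which handles the fact that $D^2\Psi$ is only defined almost everywhere; the paper's one-line proof passes over this.
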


\begin{proof}
We first set \(v:=x-y\). Since \(\nabla\Psi\) is Lipschitz, it is absolutely continuous on line segments, and
\[
\nabla\Psi(x)-\nabla\Psi(y)
=
\int_0^1D^2\Psi(y+tv)v\dd t.
\]
If \(0\preceq A\preceq M\), then \(M^{-1/2}AM^{-1/2}\) lies between \(0\) and
\(\Id\). We have \(AM^{-1}A\preceq M\) and
\[
\norm{Av}_{M^{-1}}\le\norm v_M.
\]
We integrate this estimate with \(A=D^2\Psi(y+tv)\) and obtain \eqref{eq:hessian-bound-metric}.
\end{proof}

\Cref{lem:uniform-Lipschitz-stability} is the full-support analogue of our previous result from \cite[Lemma 7.3]{Gwozdz}. Here, we use the uniform Lipschitz and base-point bounds to replace the compact-range hypothesis.

For a measurable map \(S\) and a Borel probability measure \(\sigma\), we
write \(S_\#\sigma\) for the pushforward of \(\sigma\) under \(S\), and use \(W_2\)
for the quadratic Wasserstein distance and \(\sigma_n\rightharpoonup\sigma\) for weak convergence. For a topological space \(E\), let \(C_b(E)\) denote the bounded continuous functions on \(E\). If \(F\in C_b(E)\), we set
\[
\norm{F}_\infty:=\sup_{z\in E}|F(z)|.
\]

\begin{lemma}[Stability under a uniform Lipschitz bound]
\label{lem:uniform-Lipschitz-stability}
Let \(\mu_n,\nu_n,\mu,\nu\) be probability measures on \(\R^d\) with finite second moments such that
\[
\mu_n\longrightarrow\mu,
\qquad
\nu_n\longrightarrow\nu
\qquad\text{in }W_2.
\]
We assume that \(\mu\) has a strictly positive density on \(\R^d\). For every \(n\),
let \(T_n=\nabla\Phi_n\) be a globally continuous Brenier map from \(\mu_n\) to
\(\nu_n\), where \(\Phi_n(0)=0\). Suppose further that
\[
\sup_n\Lip(T_n)<\infty,
\qquad
\sup_n\norm{T_n(0)}<\infty.
\]
We conclude that \((T_n)\) converges locally uniformly to the globally continuous Brenier representative \(T=\nabla\Phi\) from \(\mu\) to \(\nu\). If we normalise by \(\Phi(0)=0\), then
\[
\Phi_n\longrightarrow\Phi
\qquad\text{locally uniformly on }\R^d.
\]
\end{lemma}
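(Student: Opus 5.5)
Our plan is a compactness argument (Arzelà--Ascoli) followed by identification of every limit through Brenier--McCann uniqueness.

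\emph{Step 1: compactness.} Set \(L:=\sup_n\Lip(T_n)\) and \(C:=\sup_n\norm{T_n(0)}\). Then \(\norm{T_n(x)}\le C+L\norm x\), and \((T_n)\) is uniformly equicontinuous. With \(\Phi_n(0)=0\) we have
\[
\Phi_n(x)=\int_0^1\ip{T_n(tx)}{x}\dd t,
\]
so \(|\Phi_n(x)|\le C\norm x+L\norm x^2/2\), and the \(\Phi_n\) are locally uniformly Lipschitz. By Arzelà--Ascoli and a diagonal argument, every subsequence has a further subsequence along which \(T_n\to S\) and \(\Phi_n\to\Psi\) locally uniformly. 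The limit \(\Psi\) is finite and convex, \(S\) is \(L\)-Lipschitz, and \(\Psi(0)=0\). Passing to the limit in \(\Phi_n(x)=\int_0^1\ip{T_n(tx)}{x}\dd t\) gives \(\Psi(x)=\int_0^1\ip{S(tx)}{x}\dd t\). We identify \(S=\nabla\Psi\) from the subgradient inequality \(\Phi_n(z)\ge\Phi_n(x)+\ip{T_n(x)}{z-x}\), which passes to the limit pointwise. Hence \(S(x)\in\partial\Psi(x)\) for every \(x\). Because \(S\) is continuous, this forces \(\Psi\) to be differentiable with \(\nabla\Psi=S\): a convex function whose subdifferential admits a continuous selection is \(C^1\).

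\emph{Step 2: identification of the limit.} We show that \(S_\#\mu=\nu\). Take \(F\in C_b(\R^d)\) that is uniformly continuous (Lipschitz test functions suffice for weak convergence) and split
\[
\int F\circ T_n\dd\mu_n-\int F\circ S\dd\mu
=\int(F\circ T_n-F\circ S)\dd\mu_n+\Bigl(\int F\circ S\dd\mu_n-\int F\circ S\dd\mu\Bigr).
\]
The second term tends to \(0\) because \(F\circ S\in C_b\) and \(\mu_n\rightharpoonup\mu\). For the first term, fix \(\varepsilon>0\). Tightness of \((\mu_n)\) gives a ball \(\overline B_R(0)\) with \(\mu_n(\R^d\setminus\overline B_R(0))<\varepsilon\) for all \(n\). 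On the ball, uniform convergence \(T_n\to S\) together with uniform continuity of \(F\) makes the integrand small, and outside the ball the integrand is bounded by \(2\norm F_\infty\). Therefore \((T_n)_\#\mu_n=\nu_n\rightharpoonup S_\#\mu\), and so \(S_\#\mu=\nu\). Since \(S=\nabla\Psi\) with \(\Psi\) convex, Brenier--McCann uniqueness gives \(S=T\) \(\mu\)-almost everywhere. Here \(T\) is the Brenier map, which is well defined because \(\mu\) is absolutely continuous.

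\emph{Step 3: uniqueness of the representative and full convergence.} Because \(\mu\) has a strictly positive density, \(\mu\)-a.e.\ equality means Lebesgue-a.e.\ equality. Two continuous maps that agree almost everywhere agree everywhere, so \(S\) is the unique globally continuous representative of \(T\). Two convex potentials whose gradients coincide on \(\R^d\) differ by a constant, and the normalisation \(\Psi(0)=0\) fixes the potential \(\Phi\). The limit is therefore the same along every subsequence, and the whole sequences converge: \(T_n\to T\) and \(\Phi_n\to\Phi\) locally uniformly.

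We expect the main obstacle to be the first term in Step 2. On unbounded domains, locally uniform convergence alone does not control \(\int F\circ T_n\dd\mu_n\); the tightness cutoff above is what resolves this. We note that \(W_2\) convergence is more than the argument needs, since only weak convergence and tightness are used. The identification \(S=\nabla\Psi\) in Step 1 is routine convex analysis.
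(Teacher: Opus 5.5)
Your proof is correct. Its skeleton matches the paper's: Arzel\`a--Ascoli compactness, identification of every subsequential limit, uniqueness of the continuous representative from the strictly positive density, and the subsequence principle. The genuine difference is in how the limit is identified.

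The paper proves that the full plans $(\Id,T_n)_\#\mu_n$ converge weakly to $(\Id,T_\infty)_\#\mu$, using the same tightness cutoff you use. It then uses convergence of the costs $W_2(\mu_n,\nu_n)\to W_2(\mu,\nu)$ and the stability theorem for optimal plans (Villani, Theorem 5.20) to conclude that the limit coupling is optimal. Only after that does it apply Brenier--McCann uniqueness.

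You instead test only against functions of the target variable to get $S_\#\mu=\nu$. Optimality then comes for free from the structure $S=\nabla\Psi$ with $\Psi$ convex, through McCann's theorem that a convex gradient pushing an absolutely continuous $\mu$ forward to $\nu$ is unique $\mu$-almost everywhere. This route is more economical:
\begin{itemize}
\item it avoids the stability theorem and the convergence of optimal costs;
\item as you note, it needs only weak convergence of the marginals, since tightness of $(\mu_n)$ follows automatically, so the $W_2$ hypothesis is not used.
\end{itemize}
The paper's route relies on a general stability principle that does not itself use the convex-gradient structure of the limit. In the paper that structure is established anyway, via the two-point segment identity rather than your subgradient-inequality argument; both identifications of $S=\nabla\Psi$ are valid.
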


\begin{proof}
Notice that the assumptions imply that \((T_n)\) is locally equibounded and
equi-Lipschitz. By Arzel\`a--Ascoli, every subsequence admits a further subsequence, which we do not relabel, such that
\[
T_n\longrightarrow T_\infty
\qquad\text{locally uniformly}.
\]
Since \(\Phi_n(0)=0\), the segment identity gives
\[
\Phi_n(x)
=
\int_0^1\ip{T_n(tx)}{x}\dd t.
\]
If we pass to the limit in this identity, we obtain local uniform convergence of \(\Phi_n\) to the finite function
\[
\Phi_\infty(x)
:=
\int_0^1\ip{T_\infty(tx)}{x}\dd t.
\]
The functions \(\Phi_n\) are convex, so \(\Phi_\infty\) is convex. We now pass to the limit in
\[
\Phi_n(y)-\Phi_n(x)
=
\int_0^1
\ip{T_n(x+t(y-x))}{y-x}\dd t
\]
This identity shows that \(\Phi_\infty\in C^1(\R^d)\) and \(T_\infty=\nabla\Phi_\infty\).

Let us set
\[
\pi_n:=(\Id,T_n)_\#\mu_n.
\]
For every \(F\in C_b(\R^d\times\R^d)\), the definition gives
\[
\int F\dd\pi_n
=
\int F(x,T_n(x))\dd\mu_n(x).
\]
Since the map \(x\mapsto F(x,T_\infty(x))\) is bounded and continuous,
\(\mu_n\rightharpoonup\mu\) gives
\[
\int F(x,T_\infty(x))\dd\mu_n(x)
\longrightarrow
\int F(x,T_\infty(x))\dd\mu(x).
\]
For every \(R>0\),
\[
\begin{aligned}
&\left|
\int_{\R^d}
\bigl[F(x,T_n(x))-F(x,T_\infty(x))\bigr]
\dd\mu_n(x)
\right|\\
&\qquad\le
\sup_{x\in\overline B_R(0)}
\left|F(x,T_n(x))-F(x,T_\infty(x))\right|
+2\norm{F}_\infty\,
\mu_n\!\left(\overline B_R(0)^{\,c}\right).
\end{aligned}
\]
The first term tends to zero. Indeed, \(T_n\to T_\infty\) uniformly on \(\overline B_R(0)\), the sets \(T_n(\overline B_R(0))\) lie in a common compact set, and \(F\) is uniformly continuous on the resulting compact subset of \(\R^d\times\R^d\). Uniform tightness of \((\mu_n)\) makes the second term uniformly small once \(R\) is sufficiently large. We conclude that
\[
\int F\dd\pi_n
\longrightarrow
\int F(x,T_\infty(x))\dd\mu(x),
\]
or, in other terms,
\[
\pi_n\rightharpoonup(\Id,T_\infty)_\#\mu.
\]

We also have
\[
\left|
W_2(\mu_n,\nu_n)-W_2(\mu,\nu)
\right|
\le
W_2(\mu_n,\mu)+W_2(\nu_n,\nu)
\longrightarrow0.
\]
We observe that the optimal quadratic costs converge. The stability of optimal
plans under \(W_2\) convergence \cite[Theorem 5.20]{Villani} shows that the limiting coupling is optimal between \(\mu\) and \(\nu\). By Brenier--McCann uniqueness \cite{Brenier,McCann}, every locally uniform subsequential limit is identified \(\mu\)-almost everywhere with the Brenier map from \(\mu\) to \(\nu\). This implies that any two such limits agree \(\mu\)-almost everywhere, and thus everywhere by continuity and strict positivity of the density of \(\mu\). We conclude that the locally uniform limit is unique, and it is a globally continuous Brenier representative (denoted by \(T\)). Since every subsequence admits a locally uniformly convergent further subsequence, the full sequence converges locally uniformly to \(T\). The segment identity then gives the local uniform convergence of the normalised potentials.
\end{proof}

\section{Proof of the anisotropic estimate}
\label{sec:anisotropic-proof}

In the proofs below, we work with the following assumptions. We have \(V,W,a,b\in C^\infty(\R^d)\), and the associated marginals are probability measures with finite second moments. The functions \(a,b\) are globally Lipschitz, and \eqref{eq:balanced-standing} holds. It follows from \Cref{prop:smooth-diffeo-general} that the Brenier map \(T=\nabla\Phi\) is a \(C^\infty\) diffeomorphism and that \(D^2\Phi\succ0\) on \(\R^d\).

\subsection{Translated Monge--Amp\`ere identities}
\label{sec:translated-identities}

We set
\begin{equation}\label{eq:total-potentials}
\mathcal V:=V+a,
\qquad
\mathcal W:=W+b,
\end{equation}
and write
\begin{equation}\label{eq:Hessian-A-general}
A(x):=D^2\Phi(x)\succ0.
\end{equation}
The Monge--Amp\`ere equation becomes
\begin{equation}\label{eq:MA-general}
\log\det A(x)
=c-\mathcal V(x)+\mathcal W(T(x)),
\qquad c\in\R.
\end{equation}
For \(f\in C^2(\R^d)\), we define the drifted linearised operator \cite[Sections 2 and 5]{Caffarelli} \cite[Section 2, equations (2.4)--(2.6)]{KolesnikovHessian} by
\begin{equation}\label{eq:L-general}
(\cL f)(x)
:=\tr\!\left(A(x)^{-1}D^2f(x)\right)
-\ip{\nabla\mathcal W(T(x))}{\nabla f(x)}.
\end{equation}
This is the differential linearisation of \eqref{eq:MA-general}. For comparison, Valdimarsson applies the finite-difference linearisation \cite[Section 2, equation (2.6)]{Valdimarsson}. If \(R\) is positive definite, we set
\begin{equation}\label{eq:H-general}
\cH(R):=\tr R-d-\log\det R.
\end{equation}
Following the convention of \cite[formula (1.4)]{Bregman}, \(\cH(R)\) is the directed
Bregman divergence \(D_{-\log\det}(R,\Id)\).

Let us recall the translated corrector \(\mathfrak B_m\) from \eqref{eq:G-def-prelim}, and define
\begin{equation}\label{eq:translation-notation-general}
C_m(x):=A(x+m),
\qquad
\delta_m(x):=T(x+m)-T(x),
\qquad
R_m(x):=A(x)^{-1/2}C_m(x)A(x)^{-1/2}.
\end{equation}

For \(f\in C^1(\R^d)\) and \(z,h\in\R^d\), we define
\begin{equation}\label{eq:Bregman-remainder-notation}
D_f(z+h,z):=f(z+h)-f(z)-\ip{\nabla f(z)}{h}.
\end{equation}

In \Cref{lem:translated-corrector-general}, we extend our previous results
\cite[Lemmas 4.3 and 8.3]{Gwozdz} to two full-support marginals.

\begin{lemma}[Translated identity with gradient widths]
\label{lem:translated-corrector-general}
For every \(x,m\in\R^d\), we have
\begin{equation}\label{eq:LG-exact-general}
\cL\mathfrak B_m
=\cH(R_m)
-D_{\mathcal V}(x+m,x)
+D_{\mathcal W}(T(x)+\delta_m(x),T(x)).
\end{equation}
We obtain
\begin{equation}\label{eq:LG-width-lower}
\cL\mathfrak B_m
\ge\cH(R_m)
-\frac12\ip{\mathsf Rm}{m}-\omega_a(m)
+\frac12\ip{\mathsf R\delta_m}{\delta_m}
-\omega_b(\delta_m).
\end{equation}
Differentiation in the translation variable gives
\begin{align}
\nabla_m\mathfrak B_m(x)&=\delta_m(x),
\label{eq:Gm-first-general}\\
D^2_{mm}\mathfrak B_m(x)&=C_m(x),
\label{eq:Gm-mm-general}\\
D^2_{xm}\mathfrak B_m(x)&=C_m(x)-A(x).
\label{eq:Gm-xm-general}
\end{align}
\end{lemma}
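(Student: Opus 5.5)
The plan is to prove the derivative identities \eqref{eq:Gm-first-general}--\eqref{eq:Gm-xm-general} first, by direct differentiation, and then derive \eqref{eq:LG-exact-general} from the Monge--Amp\`ere equation \eqref{eq:MA-general}. Everything is smooth by \Cref{prop:smooth-diffeo-general}, so classical calculus applies.

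For the translation derivatives, differentiating \(\mathfrak B_m(x)=\Phi(x+m)-\Phi(x)-\ip{m}{T(x)}\) in \(m\) gives \(\nabla_m\mathfrak B_m=T(x+m)-T(x)=\delta_m\). A second \(m\)-derivative gives \(A(x+m)=C_m\). The mixed derivative \(\partial_x\) of \(\delta_m\) gives \(A(x+m)-A(x)\), which is symmetric.

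For the identity, compute in \(x\):
\[
\nabla\mathfrak B_m=\delta_m-A(x)m,
\qquad
D^2\mathfrak B_m=C_m-A-D^3\Phi(x)[m].
\]
Hence
\[
\tr(A^{-1}D^2\mathfrak B_m)=\tr R_m-d-\tr\bigl(A^{-1}D^3\Phi[m]\bigr).
\]
Differentiating \(\log\det A=c-\mathcal V+\mathcal W\circ T\) in direction \(m\) gives
\[
\tr(A^{-1}D^3\Phi[m])=-\ip{\nabla\mathcal V(x)}{m}+\ip{\nabla\mathcal W(T(x))}{Am}.
\]
Evaluating \eqref{eq:MA-general} at \(x+m\) and at \(x\) and subtracting gives
\[
\log\det R_m=-\mathcal V(x+m)+\mathcal V(x)+\mathcal W(T(x)+\delta_m)-\mathcal W(T(x)).
\]
Assemble \(\cL\mathfrak B_m\) using \(-\ip{\nabla\mathcal W(T)}{\delta_m-Am}\). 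The \(Am\) terms cancel. Add and subtract \(\log\det R_m\) to form \(\cH(R_m)\). The remaining terms regroup exactly as \(-D_{\mathcal V}(x+m,x)+D_{\mathcal W}(T(x)+\delta_m,T(x))\). This regrouping is the only step needing care: one must track signs so that each first-order term pairs with its matching zero-order difference.

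For the lower bound \eqref{eq:LG-width-lower}, split the remainders by linearity, \(D_{\mathcal V}=D_V+D_a\) and \(D_{\mathcal W}=D_W+D_b\).
\begin{itemize}
\item Taylor's formula with integral remainder and \eqref{eq:balanced-standing} give \(D_V(x+m,x)\le\frac12\ip{\mathsf Rm}{m}\) and \(D_W\ge\frac12\ip{\mathsf R\delta_m}{\delta_m}\).
\item \Cref{lem:width-Bregman}(ii) applies since \(a,b\) are smooth and Lipschitz. It gives \(D_a\le\omega_a(m)\) and \(D_b\ge-\omega_b(\delta_m)\).
\end{itemize}
Substituting these four bounds yields the claim. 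The main obstacle is the bookkeeping in the exact identity; the estimates themselves are immediate.
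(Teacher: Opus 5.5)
Your proposal is correct and follows essentially the same route as the paper. The paper also subtracts the Monge--Amp\`ere equation at \(x+m\) and at \(x\), and differentiates it in the constant direction \(m\); your computation of \(\tr(A^{-1}D^3\Phi[m])\) is the same step, which the paper packages as \(\cL\ip{m}{T}=-\ip{\nabla\mathcal V(x)}{m}\). It combines these with \(\cL(\Phi(\cdot+m)-\Phi)=\tr R_m-d-\ip{\nabla\mathcal W(T(x))}{\delta_m}\) and bounds the remainders using the curvature hypotheses and \Cref{lem:width-Bregman}; the only difference is that you apply \(\cL\) to \(\mathfrak B_m\) in one step rather than term by term.
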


\begin{proof}
We take the difference of \eqref{eq:MA-general} at \(x+m\) and \(x\) to obtain
\[
\log\det R_m
=-\bigl(\mathcal V(x+m)-\mathcal V(x)\bigr)
+\mathcal W(T(x)+\delta_m(x))-\mathcal W(T(x)).
\]
We further have
\[
\cL\bigl(\Phi(\,\cdot+m)-\Phi\bigr)
=\tr R_m-d-\ip{\nabla\mathcal W(T(x))}{\delta_m}.
\]
If we differentiate \eqref{eq:MA-general} in the constant direction \(m\), we get
\[
\cL\ip{m}{T}=-\ip{\nabla\mathcal V(x)}{m}.
\]
It suffices to combine these three identities to give \eqref{eq:LG-exact-general}. Since
\(D^2V\preceq\mathsf R\) and \(D^2W\succeq\mathsf R\), the curvature bounds give
\[
D_V(x+m,x)\le\frac12\ip{\mathsf Rm}{m},
\qquad
D_W(T(x)+\delta_m(x),T(x))
\ge\frac12\ip{\mathsf R\delta_m}{\delta_m}.
\]
We now apply \Cref{lem:width-Bregman} to \(a\) and \(b\). By direct differentiation of \eqref{eq:G-def-prelim}, we arrive at the derivative identities.
\end{proof}

We translate the target so that the unique minimiser of \(W\) is the origin. Recall the notation \(W,b,T,\Phi\) for the translated objects. The bound \(D^2W\succeq\mathsf R\succeq\rho\Id\) gives
\[
\ip{\nabla W(y)}{y}\ge\rho\norm y^2.
\]
We obtain
\begin{equation}\label{eq:LPhi-general-bound}
\cL\Phi
=d-\ip{\nabla(W+b)(T(x))}{T(x)}
\le d-\rho\norm{T(x)}^2+L_b\norm{T(x)}
\le d+\frac{L_b^2}{4\rho}.
\end{equation}

\subsection{Schur coercivity and spectral penalties}
\label{sec:schur}
\label{sec:spectral-penalty}

For a positive-semidefinite matrix \(D\), we use \(D^\dagger\) for its Moore--Penrose pseudoinverse. \Cref{lem:schur} is Albert's generalised Schur-complement criterion for
semidefinite block matrices \cite[Theorem 1(i)]{Albert} (\textit{cf}. \cite[Lemma 8.1]{Gwozdz}).

\begin{lemma}[Generalised Schur complement]\label{lem:schur}
Let \(n,k\) be positive integers, let \(H\in\R^{n\times n}\) and \(D\in\R^{k\times k}\) be symmetric, let \(B\in\R^{n\times k}\), and assume that \(D\succeq0\). If
\begin{equation}\label{eq:block-negative-general}
\begin{pmatrix}
H&B\\ B^{\mathsf T}&-D
\end{pmatrix}\preceq0,
\end{equation}
then
\begin{equation}\label{eq:kernel-compat-general}
Bz=0\qquad(z\in\Ker D)
\end{equation}
and
\begin{equation}\label{eq:schur-conclusion-general}
H+BD^\dagger B^{\mathsf T}\preceq0.
\end{equation}
\end{lemma}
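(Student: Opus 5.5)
We prove both conclusions by testing the block matrix against vectors whose second component is chosen adapted to \(D\), and reduce everything to the quadratic form
\[
q(w,z):=\ip{Hw}{w}+2\ip{Bz}{w}-\ip{Dz}{z}\le0,
\qquad w\in\R^n,\ z\in\R^k,
\]
which is exactly the hypothesis \eqref{eq:block-negative-general} written out.

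For the kernel compatibility \eqref{eq:kernel-compat-general}, fix \(z\in\Ker D\) and any \(w\), and evaluate \(q\) at \((w,tz)\) for \(t\in\R\). Since \(\ip{Dz}{z}=0\), we get \(\ip{Hw}{w}+2t\ip{Bz}{w}\le0\) for all real \(t\). An affine function of \(t\) bounded above on all of \(\R\) must have zero slope, so \(\ip{Bz}{w}=0\) for every \(w\), that is \(Bz=0\).

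For the Schur conclusion \eqref{eq:schur-conclusion-general}, fix \(w\) and choose \(z:=D^\dagger B^{\mathsf T}w\). We use the standard identities \(D^\dagger DD^\dagger=D^\dagger\) and \(D^\dagger\) symmetric positive semidefinite, which give \(\ip{Dz}{z}=\ip{D^\dagger B^{\mathsf T}w}{B^{\mathsf T}w}\). We also have \(\ip{Bz}{w}=\ip{D^\dagger B^{\mathsf T}w}{B^{\mathsf T}w}\). Substituting into \(q\le0\) yields
\[
\ip{Hw}{w}+\ip{BD^\dagger B^{\mathsf T}w}{w}\le0,
\]
which is \eqref{eq:schur-conclusion-general}.

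Strictly speaking, the second step does not even require the kernel compatibility. That compatibility is the part one would need to verify that this choice of \(z\) is optimal, i.e.\ that \(B^{\mathsf T}w\in\ran D\), since \(\ran D=(\Ker D)^\perp\) and \(\Ker D\subseteq\Ker B\). We record it because later arguments use it, for example to write \(B=BDD^\dagger\). The only delicate point is handling the pseudoinverse identities correctly when \(D\) is singular. Everything else is elementary, so no step is expected to be a real obstacle. Alternatively, one may simply cite \cite[Theorem 1(i)]{Albert}.
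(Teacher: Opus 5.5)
Your proof is correct and complete. Testing against $(w,tz)$ with $z\in\Ker D$ forces $\ip{Bz}{w}=0$ for all $w$. The single test vector $z=D^\dagger B^{\mathsf T}w$, together with the Penrose identity $D^\dagger DD^\dagger=D^\dagger$ and the symmetry of $D^\dagger$, then gives $\ip{(H+BD^\dagger B^{\mathsf T})w}{w}\le0$.

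The paper does not prove the lemma at all. It identifies the statement with the necessity half of Albert's characterisation of positive-semidefinite block matrices \cite[Theorem 1(i)]{Albert}, applied to the negated block matrix with its blocks reordered, $\begin{pmatrix} D & -B^{\mathsf T}\\ -B & -H\end{pmatrix}\succeq0$. That result gives $DD^\dagger B^{\mathsf T}=B^{\mathsf T}$, equivalently $B=BDD^\dagger$, i.e.\ $B$ vanishes on $\Ker D$. It also gives $-H-BD^\dagger B^{\mathsf T}\succeq0$.

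Your route is self-contained and proves exactly the direction the paper uses. It also makes visible that the Schur inequality follows from one well-chosen test vector, without invoking the kernel condition. The kernel condition is still genuinely needed downstream: it is what lets \Cref{lem:compatible-pseudoinverse-congruence} and \Cref{lem:matrix-coercivity} treat $(P-C)^\dagger$ as an honest inverse on the relevant range.

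Citing Albert additionally gives the converse, namely that the three conditions characterise semidefiniteness. The paper never uses that converse.
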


Let \(\Xi_+:(0,\infty)\to[0,\infty)\) be defined by
\[
\Xi_+(y):=
\begin{cases}
0,&0<y\le1,\\[1mm]
y+2\log y-y^{-1},&y>1.
\end{cases}
\]
For \(M\in\mathrm{Sym}_d^{++}\), we use \(\Xi_+(M)\) for the functional-calculus extension of \(\Xi_+\).

\begin{lemma}[Compatible pseudoinverse congruence]
\label{lem:compatible-pseudoinverse-congruence}
Let \(D,E\in\mathrm{Sym}_d\) satisfy \(D\succeq0\) and \(\Ker D\subseteq\Ker E\), and let \(P\in\mathrm{Sym}_d^{++}\). We set
\[
\widetilde D:=P^{-1/2}DP^{-1/2},
\qquad
\widetilde E:=P^{-1/2}EP^{-1/2}.
\]
By direct computation, we obtain
\begin{equation}\label{eq:pseudoinverse-congruence-compatible}
\widetilde E\widetilde D^\dagger\widetilde E
=P^{-1/2}ED^\dagger EP^{-1/2}.
\end{equation}
\end{lemma}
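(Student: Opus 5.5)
The plan is to verify \eqref{eq:pseudoinverse-congruence-compatible} directly. First I would identify the Moore--Penrose pseudoinverse of \(\widetilde D\), and then use the kernel compatibility \(\Ker D\subseteq\Ker E\) to remove the projection errors that appear because \(P^{-1/2}\) is not orthogonal.

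\emph{Kernel and range of \(\widetilde D\).} Since \(P^{-1/2}\) is invertible, \(\Ker\widetilde D=P^{1/2}\Ker D\). Write \(\Pi\) for the orthogonal projection onto \(\ran D=(\Ker D)^\perp\), and \(\widetilde\Pi\) for the orthogonal projection onto \(\ran\widetilde D=(\Ker \widetilde D)^\perp=P^{-1/2}\ran D\). Both \(D\) and \(E\) are symmetric, so \(\Ker D\subseteq\Ker E\) gives \(\ran E\subseteq\ran D\). Hence \(E=\Pi E=E\Pi\).

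\emph{Candidate pseudoinverse.} I would set \(X:=P^{1/2}D^\dagger P^{1/2}\). A short computation gives
\[
\widetilde D X\widetilde D=P^{-1/2}DD^\dagger DP^{-1/2}=\widetilde D
\]
and \(X\widetilde DX=X\). So \(X\) is a symmetric generalised inverse of \(\widetilde D\), but in general it is not the Moore--Penrose one, since \(\widetilde DX=P^{-1/2}\Pi P^{1/2}\) need not be symmetric. The Moore--Penrose inverse is recovered by projecting:
\[
\widetilde D^\dagger=\widetilde\Pi X\widetilde\Pi .
\]
To justify this, note that for any \(\{1\}\)-inverse \(X\) of a symmetric \(\widetilde D\), the matrix \(\widetilde\Pi X\widetilde\Pi\) is again a \(\{1,2\}\)-inverse. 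Its range lies in \(\ran\widetilde D\) and its kernel contains \(\Ker\widetilde D\), and these properties characterise \(\widetilde D^\dagger\).

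\emph{Using compatibility.} It remains to show that \(\widetilde E\widetilde\Pi=\widetilde E\), so that the projections disappear. This holds because \(\ran\widetilde E=P^{-1/2}\ran E\subseteq P^{-1/2}\ran D=\ran\widetilde D\), and \(\widetilde E\) is symmetric. Therefore \(\widetilde\Pi\widetilde E=\widetilde E\), and transposing gives \(\widetilde E\widetilde\Pi=\widetilde E\). Then
\[
\widetilde E\widetilde D^\dagger\widetilde E=\widetilde E X\widetilde E=P^{-1/2}EP^{-1/2}P^{1/2}D^\dagger P^{1/2}P^{-1/2}EP^{-1/2}=P^{-1/2}ED^\dagger EP^{-1/2}.
\]
An alternative route avoids the characterisation of \(\widetilde D^\dagger\). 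Solve \(Dw=Ez\), which is solvable since \(\ran E\subseteq\ran D\), and note that \(\ip{ED^\dagger Ez}{z}=\ip{Dw}{w}\) for every solution \(w\). The same identity holds in the transformed variables with \(\widetilde w=P^{1/2}w\), and polarisation then finishes the argument.

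\emph{Main obstacle.} The only delicate point is the identification \(\widetilde D^\dagger=\widetilde\Pi X\widetilde\Pi\). More precisely, the whole argument hinges on the fact that any generalised inverse sandwiched between vectors of \(\ran\widetilde D\) gives the same value. I would prove this directly: if \(u=\widetilde Dw_1\) and \(v=\widetilde Dw_2\), then for any \(\{1\}\)-inverse \(Y\) of \(\widetilde D\),
\[
\ip{Yu}{v}=\ip{\widetilde DY\widetilde Dw_1}{w_2}=\ip{\widetilde Dw_1}{w_2},
\]
which is independent of \(Y\). This avoids the projection bookkeeping entirely. Applying it with \(Y=\widetilde D^\dagger\) and \(Y=X\), and with \(u,v\) in \(\ran\widetilde E\subseteq\ran\widetilde D\), gives the identity.
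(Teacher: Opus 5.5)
Your argument is correct, and its core (on \(\ran\widetilde D\supseteq\ran\widetilde E\), every generalised inverse of \(\widetilde D\) gives the same bilinear value, applied to \(\widetilde D^\dagger\) and to \(X=P^{1/2}D^\dagger P^{1/2}\); equivalently, your route via solving \(Dw=Ez\)) is the same mechanism as the paper's proof. The paper states it for vectors: \(z=\widetilde D^\dagger\widetilde Ex\) solves \(\widetilde Dz=\widetilde Ex\), so \(P^{-1/2}z\) differs from \(D^\dagger EP^{-1/2}x\) by an element of \(\Ker D\), which \(E\) annihilates; your explicit identification \(\widetilde D^\dagger=\widetilde\Pi X\widetilde\Pi\) is valid but not needed.
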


\begin{proof}
Since \(D\) and \(E\) are symmetric, the kernel condition implies \(\ran E\subseteq\ran D\). In particular, \(\ran\widetilde E\subseteq\ran\widetilde D\). We fix \(x\in\R^d\) and set \(z:=\widetilde D^\dagger\widetilde E x\). Because \(\widetilde E x\in\ran\widetilde D\), we have \(\widetilde D z=\widetilde E x\), and so
\[
DP^{-1/2}z=EP^{-1/2}x.
\]
It follows that \(P^{-1/2}z=D^\dagger EP^{-1/2}x+k\) for some \(k\in\Ker D\). Since \(Ek=0\), we apply \(\widetilde E\) and obtain \eqref{eq:pseudoinverse-congruence-compatible}.
\end{proof}

In \cite[Lemma 4.4]{Gwozdz}, we compute the scalar log-determinant--Schur envelope \(\Xi_+\). The anisotropic estimate from \Cref{lem:matrix-coercivity}, which extends our result in \cite[Lemma 8.2]{Gwozdz}, follows from the exact spectral envelope in \Cref{prop:spectral-schur-envelope}.

\begin{lemma}[Log-determinant--Schur coercivity]
\label{lem:matrix-coercivity}
Let \(A,C,P\in\mathrm{Sym}_d^{++}\) satisfy \(C\preceq P\) and
\begin{equation}\label{eq:compatibility-matrix-general}
(C-A)z=0\qquad(z\in\Ker(P-C)).
\end{equation}
We set
\begin{equation}\label{eq:S-def-general}
\cS(A,C,P)
:=\tr\!\left(
A^{-1}(C-A)(P-C)^\dagger(C-A)
\right).
\end{equation}
It follows that, for every nonzero \(v\in\R^d\),
\begin{equation}\label{eq:matrix-coercivity-general}
\cH(A^{-1/2}CA^{-1/2})+\cS(A,C,P)
\ge
\Xi_+\!\left(\frac{\ip{Av}{v}}{\ip{Pv}{v}}\right).
\end{equation}
\end{lemma}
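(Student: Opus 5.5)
The plan is to normalise $A$ to the identity by congruence, reduce the claim to a statement about a single positive-definite matrix, and then compare with the scalar envelope $\Xi_+$. We set
\[
R:=A^{-1/2}CA^{-1/2},\qquad \widetilde P:=A^{-1/2}PA^{-1/2},\qquad w:=A^{1/2}v .
\]
Then $R\preceq\widetilde P$ and $\cH(A^{-1/2}CA^{-1/2})=\cH(R)$. The ratio on the right of \eqref{eq:matrix-coercivity-general} becomes
\[
\frac{\ip{Av}{v}}{\ip{Pv}{v}}=\frac{\norm w^2}{\ip{\widetilde Pw}{w}} .
\]

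For the Schur term, take $D:=P-C\succeq0$ and $E:=C-A$. The compatibility hypothesis \eqref{eq:compatibility-matrix-general} is exactly $\Ker D\subseteq\Ker E$. Hence \Cref{lem:compatible-pseudoinverse-congruence}, applied with the positive-definite matrix $A$ in place of $P$, gives
\[
A^{-1/2}(C-A)(P-C)^\dagger(C-A)A^{-1/2}
=(R-\Id)(\widetilde P-R)^\dagger(R-\Id).
\]
By cyclicity of the trace,
\[
\cS(A,C,P)=\tr\bigl((R-\Id)(\widetilde P-R)^\dagger(R-\Id)\bigr),
\]
and the kernel compatibility $\Ker(\widetilde P-R)\subseteq\Ker(R-\Id)$ is preserved. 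So it suffices to treat $A=\Id$:
\[
\cH(R)+\tr\bigl((R-\Id)(\widetilde P-R)^\dagger(R-\Id)\bigr)\ \ge\ \Xi_+\!\left(\frac{\norm w^2}{\ip{\widetilde Pw}{w}}\right).
\]

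Next, a trivial case and a monotonicity reduction. Both summands on the left are nonnegative: $\cH\ge0$ because $r-1-\log r\ge0$ eigenvalue-wise, and the Schur term is the trace of a positive-semidefinite matrix. So the claim holds whenever the ratio is $\le1$, where $\Xi_+$ vanishes. For $y>1$ one has $\Xi_+'(y)=1+2/y+1/y^2>0$, so $\Xi_+$ is nondecreasing on $(0,\infty)$ (constant $0$ on $(0,1]$ and continuous at $1$). Since $\ip{\widetilde Pw}{w}/\norm w^2\ge\lambda_{\min}(\widetilde P)$, the right-hand side is at most $\Xi_+(1/\lambda_{\min}(\widetilde P))$. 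It is therefore enough to prove
\[
\inf_{R}\Bigl\{\cH(R)+\tr\bigl((R-\Id)(\widetilde P-R)^\dagger(R-\Id)\bigr)\Bigr\}\ \ge\ \Xi_+\bigl(1/\lambda_{\min}(\widetilde P)\bigr),
\]
where the infimum runs over $0\prec R\preceq\widetilde P$ with the kernel compatibility.

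This inequality is where \Cref{prop:spectral-schur-envelope} from \Cref{app:exact-schur-envelopes} enters. I expect it to identify the infimum as a spectral quantity of the form $\sum_i\Xi_+(1/\lambda_i(\widetilde P))$, or at least one bounded below by its largest term $\Xi_+(1/\lambda_{\min}(\widetilde P))$. The key input is the scalar computation of \cite[Lemma 4.4]{Gwozdz}: for $0<p<1$, minimising
\[
r-1-\log r+\frac{(r-1)^2}{p-r}\qquad(0<r<p)
\]
yields $\Xi_+(1/p)$, with value $0$ when $p\ge1$ (take $r=1$).

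The main obstacle is the noncommuting case, where $R$ and $\widetilde P$ are not simultaneously diagonalisable. This is precisely what the appendix proposition handles. If I had to argue it directly, I would first try a perturbation to $R\prec\widetilde P$ with a true inverse. Then I would use joint convexity of $(X,Y)\mapsto XY^{-1}X$ together with the convexity and unitary invariance of $\cH$ to show that pinching $R$ onto the eigenbasis of $\widetilde P$ does not increase the functional. This reduces the problem to the diagonal, scalar case.
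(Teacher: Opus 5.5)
Your outline is the paper's proof with one difference in normalisation: you apply the congruence by \(A^{-1/2}\) rather than by \(P^{-1/2}\). So \Cref{prop:spectral-schur-envelope}, applied to the pair \((\Id,\widetilde P)\), yields \(\tr\Xi_+(\widetilde P^{-1})\). Since \(\widetilde P^{-1}=A^{1/2}P^{-1}A^{1/2}\) has the same spectrum as \(P^{-1/2}AP^{-1/2}\), your \(\lambda_{\min}(\widetilde P)\) reduction is exactly the paper's \(\lambda_{\max}(P^{-1/2}AP^{-1/2})\) reduction. The transfer of the Schur term through \Cref{lem:compatible-pseudoinverse-congruence}, the preservation of compatibility, and the monotonicity of \(\Xi_+\) are all correct.

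The step you have not carried out is the degenerate case. \Cref{prop:spectral-schur-envelope} computes an infimum over \(0\prec R\prec\widetilde P\) with a genuine inverse, whereas the lemma needs the bound for \(R\preceq\widetilde P\) with \((\widetilde P-R)^\dagger\). These two are not identified automatically. The pseudoinverse expression is a limit of strict ones only because of the kernel compatibility, and without compatibility the inequality is false. For instance, in \(d=1\) with \(R=\widetilde P=p\in(0,1)\), the left side is \(p-1-\log p<\Xi_+(1/p)\). You mention a perturbation only in passing, in your fallback paragraph, without specifying it or saying where compatibility enters.

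The fix is short. Let \(E_0\) be the orthogonal projection onto \(\Ker(\widetilde P-R)\), that is, the spectral projection of \(\widetilde P-R\) at \(0\). Compatibility gives \(RE_0=E_0R=E_0\). For \(0<\varepsilon<1\), put \(R_\varepsilon:=R-\varepsilon E_0\). Then \(0\prec R_\varepsilon\prec\widetilde P\) and \((\widetilde P-R_\varepsilon)^{-1}=(\widetilde P-R)^\dagger+\varepsilon^{-1}E_0\). Expanding with \((R-\Id)E_0=E_0(R-\Id)=0\) gives
\[
(R_\varepsilon-\Id)(\widetilde P-R_\varepsilon)^{-1}(R_\varepsilon-\Id)
=(R-\Id)(\widetilde P-R)^\dagger(R-\Id)+\varepsilon E_0 .
\]
Now apply the strict case to \(R_\varepsilon\) and let \(\varepsilon\downarrow0\), using \(\cH(R_\varepsilon)\to\cH(R)\). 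This is the paper's \(\widetilde C_\varepsilon=\widetilde C-\varepsilon K\) step, transported to your coordinates.

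Your pinching sketch is a valid alternative to the appendix's Euler-equation computation for the lower bound. It uses convexity of \(\cH\), joint convexity of \((X,Y)\mapsto XY^{-1}X\), invariance under orthogonal maps commuting with \(\widetilde P\), and then the scalar envelope. However, it is unnecessary once you invoke the proposition, and it too needs the approximation above to reach the degenerate case.
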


\begin{proof}
Let us set
\[
\widetilde A:=P^{-1/2}AP^{-1/2},
\qquad
\widetilde C:=P^{-1/2}CP^{-1/2},
\qquad
\widetilde v:=P^{1/2}v.
\]
We have \(\widetilde C\preceq\Id\). If \(z\in\Ker(\Id-\widetilde C)\), then \(P^{-1/2}z\in\Ker(P-C)\). This implies that \((\widetilde C-\widetilde A)z=0\).

To compare the Schur terms, we set
\[
D:=P-C,
\qquad
E:=C-A,
\qquad
\widetilde D:=P^{-1/2}DP^{-1/2},
\qquad
\widetilde E:=P^{-1/2}EP^{-1/2}.
\]
The compatibility condition gives \(\Ker D\subseteq\Ker E\). By
\Cref{lem:compatible-pseudoinverse-congruence}, we obtain
\eqref{eq:pseudoinverse-congruence-compatible}. Cyclicity of the trace then gives
\[
\tr\!\left(
\widetilde A^{-1}\widetilde E
\widetilde D^\dagger\widetilde E
\right)
=\cS(A,C,P).
\]
The identity \(\widetilde A^{-1}\widetilde C =P^{1/2}A^{-1}CP^{-1/2}\) shows that
\(\widetilde A^{-1/2}\widetilde C\widetilde A^{-1/2}\) and \(A^{-1/2}CA^{-1/2}\) have the same eigenvalues, and their Rayleigh quotients satisfy
\[
\frac{\ip{\widetilde A\widetilde v}{\widetilde v}}
{\norm{\widetilde v}^2}
=
\frac{\ip{Av}{v}}{\ip{Pv}{v}}.
\]
It now suffices to prove the normalised estimate. Let us first assume that
\(\widetilde C\prec\Id\). By \Cref{prop:spectral-schur-envelope}, which we apply with
\(P=\Id\),
\begin{align*}
&\cH(\widetilde A^{-1/2}\widetilde C\widetilde A^{-1/2})
+\tr\!\left[
\widetilde A^{-1}(\widetilde C-\widetilde A)
(\Id-\widetilde C)^{-1}
(\widetilde C-\widetilde A)
\right]\\
&\qquad\ge \tr\Xi_+(\widetilde A).
\end{align*}
The nonnegativity and monotonicity of \(\Xi_+\) give
\[
\tr\Xi_+(\widetilde A)
\ge
\Xi_+\!\left(\lambda_{\max}(\widetilde A)\right)
\ge
\Xi_+\!\left(
\frac{\ip{\widetilde A\widetilde v}{\widetilde v}}
{\norm{\widetilde v}^2}
\right).
\]

Let us now consider the general case \(\widetilde C\preceq\Id\). We denote by \(K\) the orthogonal projection onto \(\Ker(\Id-\widetilde C)\) and, for \(0<\varepsilon<1\), set
\[
\widetilde C_\varepsilon:=\widetilde C-\varepsilon K.
\]
It follows that
\[
0\prec\widetilde C_\varepsilon\prec\Id,
\]
and the compatibility condition and symmetry imply
\[
(\widetilde C-\widetilde A)K
=
K(\widetilde C-\widetilde A)
=
0.
\]
Here, \(K\) is the spectral projection of \(\Id-\widetilde C\) associated with the eigenvalue zero. The previous identity gives
\[
(\Id-\widetilde C_\varepsilon)^{-1}
=
(\Id-\widetilde C)^\dagger+\varepsilon^{-1}K.
\]
By direct expansion, we have
\begin{align*}
&(\widetilde C_\varepsilon-\widetilde A)
(\Id-\widetilde C_\varepsilon)^{-1}
(\widetilde C_\varepsilon-\widetilde A)\\
&\qquad=
(\widetilde C-\widetilde A)
(\Id-\widetilde C)^\dagger
(\widetilde C-\widetilde A)
+\varepsilon K.
\end{align*}
We apply this the strict case to \(\widetilde C_\varepsilon\) and let
\(\varepsilon\downarrow0\) to get the required limit. Using
\eqref{eq:pseudoinverse-congruence-compatible} and the above Rayleigh
quotient identity, we conclude the proof of \eqref{eq:matrix-coercivity-general}.
\end{proof}

\begin{lemma}[Joint maximum calculus with a quadratic baseline]
\label{lem:quadratic-baseline-calculus}
Let \(\Gamma\in\mathrm{Sym}_d\) and \(\Theta\in C^2(\R^d)\), and define
\[
h_\Gamma(x):=\Phi(x)-\frac12\ip{\Gamma x}{x},
\qquad
S_\Gamma(x):=T(x)-\Gamma x,
\]
\[
F^\Gamma_m(x)
:=h_\Gamma(x+m)-h_\Gamma(x)-\ip{m}{S_\Gamma(x)}.
\]
We assume that
\[
J_\varepsilon(x,m):=F^\Gamma_m(x)-\Theta(m)-\varepsilon\Phi(x)
\]
has a local maximum at \((x,m)\), and set
\[
A:=D^2\Phi(x),\qquad
C:=D^2\Phi(x+m),\qquad
\delta:=T(x+m)-T(x),\qquad
\Pi:=\Gamma+D^2\Theta(m).
\]
If \(\Pi\succ0\), then
\begin{align}
\delta&=\Gamma m+\nabla\Theta(m),
\label{eq:baseline-delta-stationarity}\\
Am&=\delta-\varepsilon T(x).
\label{eq:baseline-x-stationarity}
\end{align}
The same calculation gives
\begin{equation}\label{eq:baseline-compatibility}
C\preceq\Pi,\qquad
(C-A)z=0\quad(z\in\Ker(\Pi-C)),
\end{equation}
and
\begin{equation}\label{eq:baseline-L-Schur}
\cL F^\Gamma_m-\varepsilon\cL\Phi+\cS(A,C,\Pi)\le0.
\end{equation}
\end{lemma}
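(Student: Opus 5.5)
The plan is to read off the first- and second-order necessary conditions for a local maximum of \(J_\varepsilon\) in the joint variable \((x,m)\in\R^{2d}\), and then to apply \Cref{lem:schur} to the resulting \(2d\times2d\) block Hessian. Everything is classical because \(\Phi\in C^\infty\) by \Cref{prop:smooth-diffeo-general} and \(\Theta\in C^2\).

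\textbf{Step 1: reduce \(F^\Gamma_m\) to the corrector.} A direct expansion gives
\[
\tfrac12\ip{\Gamma(x+m)}{x+m}-\tfrac12\ip{\Gamma x}{x}-\ip{m}{\Gamma x}=\tfrac12\ip{\Gamma m}{m}.
\]
Hence
\[
F^\Gamma_m(x)=\mathfrak B_m(x)-\tfrac12\ip{\Gamma m}{m}.
\]
In particular, \(x\)-derivatives of \(F^\Gamma_m\) coincide with those of \(\mathfrak B_m\). Also \(\cL F^\Gamma_m=\cL\mathfrak B_m\) at fixed \(m\), because the correction term does not depend on \(x\).

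\textbf{Step 2: first-order conditions.} By \eqref{eq:Gm-first-general},
\[
\nabla_m J_\varepsilon=\delta-\Gamma m-\nabla\Theta(m).
\]
Setting this to zero gives \eqref{eq:baseline-delta-stationarity}. For the \(x\)-gradient, differentiating \eqref{eq:G-def-prelim} in \(x\) gives \(\nabla_x\mathfrak B_m=\delta-Am\). Therefore
\[
\nabla_xJ_\varepsilon=\delta-Am-\varepsilon T(x),
\]
and setting this to zero gives \eqref{eq:baseline-x-stationarity}.

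\textbf{Step 3: second-order conditions.} By \eqref{eq:Gm-mm-general} and \eqref{eq:Gm-xm-general},
\[
D^2_{mm}J_\varepsilon=C-\Gamma-D^2\Theta(m)=C-\Pi,
\qquad
D^2_{xm}J_\varepsilon=C-A.
\]
The \(x\)-block is \(\mathcal X:=D^2_{xx}\mathfrak B_m-\varepsilon A\). It contains the third-derivative term \(-D^3\Phi(x)[m]\), but this term never needs to be computed explicitly. At a local maximum the joint Hessian is negative semidefinite:
\[
\begin{pmatrix}\mathcal X & C-A\\ C-A & -(\Pi-C)\end{pmatrix}\preceq0 .
\]
The lower-right block being \(\preceq 0\) is exactly \(C\preceq\Pi\). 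I then apply \Cref{lem:schur} with \(H=\mathcal X\), \(B=C-A\) and \(D=\Pi-C\succeq0\). It yields the kernel compatibility in \eqref{eq:baseline-compatibility}, together with
\[
\mathcal X+(C-A)(\Pi-C)^\dagger(C-A)\preceq0 .
\]
(The hypothesis \(\Pi\succ0\) is only needed so that \(\cS(A,C,\Pi)\) is defined as in \Cref{lem:matrix-coercivity}; the Schur step itself uses only \(\Pi-C\succeq0\).)

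\textbf{Step 4: trace against \(A^{-1}\).} Since \(A\succ0\), congruence by \(A^{-1/2}\) followed by taking the trace preserves the inequality. This gives
\[
\tr(A^{-1}\mathcal X)+\cS(A,C,\Pi)\le0 .
\]
Finally, I compute
\[
\cL F^\Gamma_m-\varepsilon\cL\Phi=\tr(A^{-1}\mathcal X)-\ip{\nabla\mathcal W(T(x))}{\nabla_xJ_\varepsilon}.
\]
The drift term vanishes by the \(x\)-stationarity from Step 2. This yields \eqref{eq:baseline-L-Schur}.

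The only delicate point I expect is the bookkeeping of the mixed block. One must check that the mixed derivative of \(F^\Gamma_m\) really is \(C-A\), with no contribution from \(\Gamma\), since \(\Gamma\) cancels between \(h_\Gamma\) and \(S_\Gamma\). One must also check that \(\varepsilon\Phi\) contributes nothing to the mixed or \(mm\) blocks. Both follow from Step 1 combined with \Cref{lem:translated-corrector-general}.
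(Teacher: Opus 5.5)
Your proposal is correct and follows the paper's proof essentially step for step. Like the paper, you reduce via \(F^\Gamma_m=\mathfrak B_m-\tfrac12\ip{\Gamma m}{m}\), read off the first- and second-order conditions at the joint maximum, apply \Cref{lem:schur} to the block Hessian with \(D=\Pi-C\), and take the \(A^{-1}\)-trace, where the drift term vanishes by \(x\)-stationarity. Your side remark is also accurate: \(\Pi\succ0\) plays no role in the calculation, and it is automatic anyway since \(0\prec C\preceq\Pi\).
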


\begin{proof}
The identity
\[
F^\Gamma_m(x)=\mathfrak B_m(x)-\frac12\ip{\Gamma m}{m}
\]
gives
\[
\nabla_mF^\Gamma_m=\delta-\Gamma m,\qquad
\nabla_xF^\Gamma_m=\delta-Am,
\]
and
\[
D^2_{mm}F^\Gamma_m=C-\Gamma,\qquad
D^2_{xm}F^\Gamma_m=C-A.
\]
The stationarity of \(J_\varepsilon\) gives \eqref{eq:baseline-delta-stationarity} and
\eqref{eq:baseline-x-stationarity}. At the maximum, the Hessian condition is
\[
\begin{pmatrix}
D^2_{xx}F^\Gamma_m-\varepsilon A&C-A\\
C-A&C-\Pi
\end{pmatrix}\preceq0.
\]
It follows from the lower-right block that \(C\preceq\Pi\). We now apply \Cref{lem:schur} and obtain the compatibility in \eqref{eq:baseline-compatibility}, along with
\[
D^2_{xx}F^\Gamma_m-\varepsilon A
+(C-A)(\Pi-C)^\dagger(C-A)\preceq0.
\]
Finally, we take the \(A^{-1}\)-trace and use \(\nabla_xF^\Gamma_m=\varepsilon T(x)\), which yields \eqref{eq:baseline-L-Schur}.
\end{proof}

With the Schur coercivity estimate ready, we now construct the spectral penalty needed for the maximum-principle argument. To this end, we fix \(S\in\mathrm{Sym}_d^{++}\) and \(0<\beta<1\) such that
\begin{equation}\label{eq:balanced-admissibility}
\mathsf D_{\beta,S}
:=S^{1/2}\mathsf RS^{1/2}
-\beta^2S^{-1/2}\mathsf RS^{-1/2}
\succ0.
\end{equation}
For \(u\in\mathbb S^{d-1}\), define
\begin{align}
\alpha_S(u)
&:=\ip{S^{1/2}\mathsf RS^{1/2}u}{u},
\label{eq:alpha-S}\\
\gamma_S(u)
&:=\ip{S^{-1/2}\mathsf RS^{-1/2}u}{u},
\label{eq:gamma-S}\\
d_{\beta,S}(u)
&:=\alpha_S(u)-\beta^2\gamma_S(u),
\label{eq:d-beta-S-balanced}\\
n_{\beta,S}(u)
&:=\omega_b(S^{1/2}u)
+\beta\omega_a(S^{-1/2}u).
\label{eq:n-beta-S-balanced}
\end{align}
Let
\begin{equation}\label{eq:balanced-spectral-action}
\cA_{\mathsf R}(S,\beta,a,b)
:=\int_0^\infty
\sup_{u\in\mathbb S^{d-1}}
\left(n_{\beta,S}(u)-\frac t2d_{\beta,S}(u)\right)_+\dd t.
\end{equation}
Since \(d_{\beta,S}\) has a positive minimum on the sphere, the integrand vanishes for all sufficiently large \(t\).

We fix
\begin{equation}\label{eq:lambda-spectral-range}
0<\lambda<\frac12
\end{equation}
and, for \(\xi\ge0\), define
\begin{align}
q_\lambda(\xi)
&:=\xi\sup_{u\in\mathbb S^{d-1}}
\left(n_{\beta,S}(u)-\lambda \xi\,d_{\beta,S}(u)\right)_+,
\label{eq:q-spectral}\\
E_\lambda(\xi)
&:=\int_0^\xi\frac{q_\lambda(s)}s\dd s,
\label{eq:E-spectral}\\
E_\lambda^\infty
&:=\int_0^\infty
\sup_{u\in\mathbb S^{d-1}}
\left(n_{\beta,S}(u)-\lambda s\,d_{\beta,S}(u)\right)_+\dd s.
\label{eq:E-infty-spectral}
\end{align}
We understand \(q_\lambda(s)/s\) at \(s=0\) by continuous extension. It follows
that \(E_\lambda(\xi)\uparrow E_\lambda^\infty<\infty\). Let us choose
\begin{equation}\label{eq:K-spectral-choice}
K>\frac{\e^{E_\lambda^\infty}}\beta
\end{equation}
and set
\begin{equation}\label{eq:R0-spectral}
\mathscr R_0(\xi):=\frac \xi K\e^{E_\lambda(\xi)}.
\end{equation}
For \(\xi>0\), we have
\begin{equation}\label{eq:R0-log-derivative-spectral}
\frac{\xi\mathscr R_0'(\xi)}{\mathscr R_0(\xi)}
=1+q_\lambda(\xi)>0.
\end{equation}
Notice that the function \(q_\lambda\) vanishes for all sufficiently large \(\xi\), so
\(E_\lambda\) is eventually constant and \(\mathscr R_0(\xi)\to\infty\) as \(\xi\to\infty\). Along with \eqref{eq:R0-log-derivative-spectral} and \(\mathscr R_0(0)=0\), this implies
that \(\mathscr R_0\) is an increasing \(C^1\) bijection of \([0,\infty)\). We write \(p_0=\mathscr R_0^{-1}\). This inverse-penalty parametrisation follows our construction in \cite[Section 4.3 and Proposition 4.8]{Gwozdz}. We further set
\begin{equation}\label{eq:beta0-spectral}
\beta_0:=\frac{\e^{E_\lambda^\infty}}K<\beta.
\end{equation}
Since \(E_\lambda\le E_\lambda^\infty\), we obtain
\begin{equation}\label{eq:R0-beta0}
\mathscr R_0(\xi)\le\beta_0\xi.
\end{equation}
The function \(q_\lambda\) also vanishes for large \(\xi\), so \(\mathscr R_0(\xi)=\beta_0\xi\) for all sufficiently large \(\xi\). In other terms, \(p_0(r)=r/\beta_0\) for all sufficiently large \(r\).

Let us define
\begin{align}
d_0
&:=\min_{u\in\mathbb S^{d-1}}
\left(\alpha_S(u)-\beta_0^2\gamma_S(u)\right)>0,
\label{eq:d0-tail}\\
n_0
&:=\max_{u\in\mathbb S^{d-1}}
\left(\omega_b(S^{1/2}u)
+\beta_0\omega_a(S^{-1/2}u)\right)<\infty.
\label{eq:n0-tail}
\end{align}
We choose \(r_0\) sufficiently large that \(p_0(r)=r/\beta_0\) for
\(r\ge r_0\) and
\begin{equation}\label{eq:r0-tail-choice}
p_0(r_0)=\frac{r_0}{\beta_0}>\frac{2n_0}{d_0}.
\end{equation}
Let us now define \(p:[0,\infty)\to[0,\infty)\) by
\begin{equation}\label{eq:tail-completed-p}
p(r):=
\begin{cases}
p_0(r),&0\le r\le r_0,\\[1mm]
\dfrac r{\beta_0}+(r-r_0)^3,&r\ge r_0,
\end{cases}
\end{equation}
and set
\begin{equation}\label{eq:theta-Theta-spectral}
\theta(r):=\int_0^rp(s)\dd s,
\qquad
\Theta(m):=\theta(\norm m_S).
\end{equation}

For \(u,v\in\R^d\), write
\[
u\otimes v:=uv^{\mathsf T}.
\]

\begin{lemma}[Radial penalty calculus]
\label{lem:radial-penalty-calculus}
Let \(S\in\mathrm{Sym}_d^{++}\), let \(p\in C^1([0,\infty))\) satisfy \(p(0)=0\), set \(K:=p'(0)\), and define
\[
\theta(r):=\int_0^rp(s)\dd s,
\qquad
\Theta(m):=\theta(\norm m_S).
\]
It follows that \(\Theta\in C^2(\R^d)\) and \(D^2\Theta(0)=KS\). If \(m\ne0\),
\(r=\norm m_S\), and \(u=S^{1/2}m/r\), then
\begin{align}
\nabla\Theta(m)&=p(r)S^{1/2}u,
\label{eq:radial-gradient-general}\\
D^2\Theta(m)
&=p'(r)S^{1/2}u\otimes S^{1/2}u
+\frac{p(r)}rS^{1/2}(\Id-u\otimes u)S^{1/2}.
\label{eq:radial-Hessian-general}
\end{align}
If, in addition, \(K>0\) and \(p'(r)>0\) for every \(r>0\), then \(\Theta\) is strictly convex.
\end{lemma}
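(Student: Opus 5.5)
The plan is to reduce everything to the Euclidean norm by the linear change of variables \(w:=S^{1/2}m\), so that \(\norm m_S=\norm w\) and \(\Theta(m)=\vartheta(S^{1/2}m)\) with \(\vartheta(w):=\theta(\norm w)\). The chain rule then gives
\[
\nabla\Theta(m)=S^{1/2}\nabla\vartheta(w),
\qquad
D^2\Theta(m)=S^{1/2}D^2\vartheta(w)S^{1/2}.
\]
So it suffices to treat the radial function \(\vartheta\) on \(\R^d\) and to conjugate the result by \(S^{1/2}\).

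Away from the origin, \(\norm{\cdot}\) is smooth. For \(w\ne0\), \(r=\norm w\), and \(u=w/r\), we have \(\nabla\norm w=u\) and \(D^2\norm w=(\Id-u\otimes u)/r\). Since \(\theta'=p\) and \(\theta''=p'\) are continuous, the chain rule gives \(\nabla\vartheta(w)=p(r)u\) and
\[
D^2\vartheta(w)=p'(r)\,u\otimes u+\frac{p(r)}r(\Id-u\otimes u).
\]
Conjugating by \(S^{1/2}\) yields \eqref{eq:radial-gradient-general} and \eqref{eq:radial-Hessian-general}.

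The origin is the main point of care. Because \(p(0)=0\) and \(p\in C^1\), we have \(p(r)=Kr+o(r)\) and \(\theta(r)=Kr^2/2+o(r^2)\). It follows that \(\nabla\vartheta(w)=p(\norm w)w/\norm w\to0\), so \(\vartheta\in C^1\) with \(\nabla\vartheta(0)=0\). Next, \(\nabla\vartheta(w)=Kw+o(\norm w)\), so \(\vartheta\) is twice differentiable at \(0\) with \(D^2\vartheta(0)=K\Id\). For continuity of the Hessian at \(0\), note that \(p'(r)\to K\) and \(p(r)/r\to K\) as \(r\downarrow0\). Hence the displayed formula gives \(D^2\vartheta(w)\to K(u\otimes u+\Id-u\otimes u)=K\Id\), uniformly in \(u\). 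Therefore \(\vartheta\in C^2(\R^d)\), and so \(\Theta\in C^2(\R^d)\) with \(D^2\Theta(0)=KS\).

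For strict convexity, assume \(K>0\) and \(p'>0\) on \((0,\infty)\). Then \(p\) is strictly increasing with \(p(0)=0\), so \(p(r)>0\) for \(r>0\). For \(w\ne0\), \(D^2\vartheta(w)\) has eigenvalue \(p'(r)>0\) in the direction \(u\) and \(p(r)/r>0\) on \(u^\perp\), so it is positive definite. At the origin, \(D^2\vartheta(0)=K\Id\succ0\). A \(C^2\) function with everywhere positive-definite Hessian is strictly convex, since its restriction to any line has strictly positive second derivative. Composing with the invertible linear map \(S^{1/2}\) preserves strict convexity, so \(\Theta\) is strictly convex.
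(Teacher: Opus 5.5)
Your proof is correct and follows essentially the same route as the paper: direct differentiation away from the origin, continuous extension of the Hessian to \(KS\) at \(m=0\) using \(p(r)=Kr+o(r)\) and \(p'(r)\to K\), and positivity of the radial eigenvalue \(p'(r)\) and the tangential eigenvalue \(p(r)/r\) for strict convexity. Your reduction via \(w=S^{1/2}m\) and your explicit check that \(\nabla\vartheta(w)=Kw+o(\norm w)\) make twice-differentiability at the origin more explicit than the paper's one-line remark that the Hessian ``extends continuously''.
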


\begin{proof}
For \(m\ne0\), direct differentiation of \(\theta(\norm m_S)\) gives the two identities. Since \(p(r)=Kr+o(r)\) and \(p'(r)\to K\) as \(r\downarrow0\),
\eqref{eq:radial-Hessian-general} extends continuously to \(m=0\) with value
\(KS\). Assume now that \(K>0\) and \(p'>0\) on \((0,\infty)\). The radial eigenvalue \(p'(r)\) and the tangential eigenvalue \(p(r)/r\) are positive. It follows that the Hessian is positive definite for \(m\ne0\), and it is also positive definite at the origin.
\end{proof}

\begin{lemma}[Properties of the matrix-radial penalty]
\label{lem:spectral-penalty-properties}
The function \(p\) is \(C^1\) and strictly increasing. Moreover, \(\Theta\) belongs to \(C^2(\R^d)\), is strictly convex, and satisfies
\begin{equation}\label{eq:Theta-origin-spectral}
D^2\Theta(0)=KS.
\end{equation}
We also have
\begin{equation}\label{eq:Theta-superquadratic}
\frac{\Theta(m)}{\norm m^2}\longrightarrow+\infty
\qquad(\norm m\to\infty).
\end{equation}
If \(m\ne0\), \(r=\norm m_S\), and \(u=S^{1/2}m/r\), then
\begin{align}
\nabla\Theta(m)&=p(r)S^{1/2}u,
\label{eq:gradient-Theta-spectral}\\
D^2\Theta(m)
&=p'(r)S^{1/2}u\otimes S^{1/2}u
+\frac{p(r)}rS^{1/2}(\Id-u\otimes u)S^{1/2},
\label{eq:Hessian-Theta-spectral}\\
\ip{D^2\Theta(m)m}{m}&=r^2p'(r).
\label{eq:radial-Hessian-spectral}
\end{align}
\end{lemma}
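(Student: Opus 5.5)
The plan is to reduce everything to \Cref{lem:radial-penalty-calculus} once the regularity and monotonicity of \(p\) are in hand. The first step is to check that \(p\) is \(C^1\) on \([0,\infty)\), that \(p(0)=0\), and that \(p'(0)=K\).

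On \([0,r_0]\), \(p=p_0=\mathscr R_0^{-1}\).
\begin{itemize}
\item From \eqref{eq:R0-log-derivative-spectral}, \(\mathscr R_0'(\xi)=\mathscr R_0(\xi)(1+q_\lambda(\xi))/\xi\), which is positive for \(\xi>0\).
\item Near \(\xi=0\) we have \(\mathscr R_0(\xi)/\xi=K^{-1}\e^{E_\lambda(\xi)}\to K^{-1}\). Since \(q_\lambda(0)=0\) by its definition, \(\mathscr R_0'(\xi)\to K^{-1}\) as \(\xi\downarrow0\). Here we use that \(q_\lambda\) is continuous, being \(\xi\) times a supremum of continuous functions over a compact sphere, hence continuous.
\item Consequently \(\mathscr R_0\) is \(C^1\) with strictly positive derivative on \([0,\infty)\). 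The inverse function theorem then gives that \(p_0\) is \(C^1\), \(p_0'>0\), \(p_0(0)=0\), and \(p_0'(0)=K\).
\end{itemize}

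At the gluing point \(r_0\), both branches equal \(r_0/\beta_0\). Their derivatives are \(1/\beta_0\) and \(1/\beta_0+3(r-r_0)^2\big|_{r_0}=1/\beta_0\); the left value uses that \(p_0(r)=r/\beta_0\) on a neighbourhood of \(r_0\), by the choice of \(r_0\). Hence \(p\in C^1\), and \(p'>0\) on both pieces, so \(p\) is strictly increasing.

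\Cref{lem:radial-penalty-calculus} now gives that \(\Theta\in C^2\), that \(D^2\Theta(0)=KS\) with \(K>0\), strict convexity, and the formulas \eqref{eq:gradient-Theta-spectral}--\eqref{eq:Hessian-Theta-spectral}. The radial identity \eqref{eq:radial-Hessian-spectral} follows by inserting \(m=rS^{-1/2}u\) into \eqref{eq:Hessian-Theta-spectral}:
\begin{itemize}
\item \(\ip{S^{1/2}u}{m}=r\), so the first term contributes \(p'(r)r^2\);
\item \(S^{1/2}m=ru\) lies in the kernel of \(\Id-u\otimes u\), so the tangential term vanishes.
\end{itemize}

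For the superquadratic growth \eqref{eq:Theta-superquadratic}, note that for \(r\ge r_0\) we have \(p(r)\ge(r-r_0)^3\). Hence \(\theta(r)\ge(r-r_0)^4/4\) for \(r\ge r_0\), and \(\theta(r)/r^2\to\infty\). Since \(\norm m_S\ge\sqrt{\lambda_{\min}(S)}\,\norm m\), it follows that \(\Theta(m)/\norm m^2\ge\lambda_{\min}(S)\,\theta(r)/r^2\to\infty\).

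The only delicate point is the \(C^1\) behaviour of \(p_0\) at the origin, that is, the continuity of \(q_\lambda\) at \(0\) together with the limit \(\mathscr R_0'(0^+)=1/K\). Everything else is routine.
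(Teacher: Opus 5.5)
Your proposal is correct and follows the paper's proof. Like the paper, you match the two branches of \(p\) to first order at \(r_0\), get \(p'(0)=K\) from \(\mathscr R_0'(0)=K^{-1}\), use positivity of both branch derivatives for strict monotonicity, invoke \Cref{lem:radial-penalty-calculus}, and read off the superquadratic growth from the cubic tail. You additionally write out the routine details the paper leaves implicit: continuity of \(q_\lambda\), the computation of \eqref{eq:radial-Hessian-spectral}, and the comparison \(\norm m_S\ge\sqrt{\lambda_{\min}(S)}\,\norm m\).
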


\begin{proof}
By our choice of \(r_0\), the identity \(p_0(r)=r/\beta_0\) holds near \(r_0\). The two branches in \eqref{eq:tail-completed-p} have the same value and the same first derivative at \(r_0\). At the origin, \(\mathscr R_0'(0)=K^{-1}\), which gives \(p'(0)=K\). The derivative of each branch is positive, so \(p\) is strictly increasing. We now apply
\Cref{lem:radial-penalty-calculus} and obtain \eqref{eq:Theta-origin-spectral} together with \eqref{eq:gradient-Theta-spectral}--\eqref{eq:radial-Hessian-spectral}.
Finally, we observe that the cubic term in \(p\) gives the superquadratic growth.
\end{proof}

\begin{lemma}[Spectral penalty certificate]
\label{lem:spectral-penalty-certificate}
For every \(r>0\) and every \(u\in\mathbb S^{d-1}\), it holds that
\begin{align}
&\Xi_+\!\left(\frac{p(r)}{rp'(r)}\right)
+\frac12p(r)^2\alpha_S(u)
-\frac12r^2\gamma_S(u)
\notag\\
&\qquad
-p(r)\omega_b(S^{1/2}u)
-r\omega_a(S^{-1/2}u)>0.
\label{eq:spectral-penalty-certificate}
\end{align}
\end{lemma}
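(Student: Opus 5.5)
The plan is to split according to the two branches of \(p\) in \eqref{eq:tail-completed-p}. On the inner branch we change variables to \(\xi=p(r)\) and exploit the defining ODE of \(\mathscr R_0\). On the tail we show that the \(\Xi_+\) term vanishes and bound the remaining quadratic in \(p(r)\) using \eqref{eq:r0-tail-choice}. Throughout, \(\alpha,\gamma\) denote \(\alpha_S(u),\gamma_S(u)\), and \(\omega_b,\omega_a\) denote \(\omega_b(S^{1/2}u),\omega_a(S^{-1/2}u)\).

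\emph{Inner regime \(0<r\le r_0\).} Here \(p=p_0=\mathscr R_0^{-1}\). Set \(\xi:=p(r)>0\), so that \(r=\mathscr R_0(\xi)\). By \eqref{eq:R0-log-derivative-spectral},
\[
\frac{p(r)}{rp'(r)}=\frac{\xi\mathscr R_0'(\xi)}{\mathscr R_0(\xi)}=1+q_\lambda(\xi).
\]
We will use the elementary bound \(\Xi_+(1+q)\ge q\) for \(q\ge0\). It holds because \(\Xi_+(1)=0\) and \(\Xi_+'(y)=1+2/y+1/y^2\ge1\) for \(y>1\). Next, \eqref{eq:R0-beta0} gives \(r\le\beta_0\xi\), and we have \(\beta_0<\beta\). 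Hence
\[
-\tfrac12r^2\gamma-r\omega_a\ge-\tfrac12\beta^2\xi^2\gamma-\beta\xi\omega_a.
\]
So the left side of \eqref{eq:spectral-penalty-certificate} is at least
\[
q_\lambda(\xi)-\xi\,n_{\beta,S}(u)+\tfrac12\xi^2d_{\beta,S}(u).
\]
By definition \eqref{eq:q-spectral}, \(q_\lambda(\xi)\ge\xi\bigl(n_{\beta,S}(u)-\lambda\xi d_{\beta,S}(u)\bigr)\). The expression is therefore at least
\[
\xi^2\left(\tfrac12-\lambda\right)d_{\beta,S}(u)>0,
\]
where we use \eqref{eq:lambda-spectral-range} and \eqref{eq:balanced-admissibility}.

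\emph{Tail regime \(r\ge r_0\).} Here \(p'(r)=1/\beta_0+3(r-r_0)^2\) and \(p(r)/r=1/\beta_0+(r-r_0)^3/r\). Since \((r-r_0)^3/r\le3(r-r_0)^2\), we get \(p(r)/(rp'(r))\le1\), so the \(\Xi_+\) term vanishes. Consider
\[
f(\xi):=\tfrac12\xi^2\alpha-\xi\omega_b-\tfrac12r^2\gamma-r\omega_a.
\]
This is a convex quadratic, increasing for \(\xi\ge\omega_b/\alpha\). We have \(p(r)\ge r/\beta_0\ge r_0/\beta_0>2n_0/d_0\). Moreover \(d_0\le\alpha\) and \(n_0\ge\omega_b\), so \(2n_0/d_0\ge\omega_b/\alpha\), and therefore \(f(p(r))\ge f(r/\beta_0)\). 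Now
\[
f(r/\beta_0)=\frac{r^2}{2\beta_0^2}\bigl(\alpha-\beta_0^2\gamma\bigr)-\frac r{\beta_0}\bigl(\omega_b+\beta_0\omega_a\bigr)
\ge\frac r{\beta_0}\left(\frac{r}{2\beta_0}d_0-n_0\right)>0,
\]
using \eqref{eq:d0-tail}, \eqref{eq:n0-tail} and \eqref{eq:r0-tail-choice}.

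The step needing the most care is the inner regime. There the parametrisation \(r=\mathscr R_0(\xi)\) must be matched exactly against the definition of \(q_\lambda\). We also have to replace \(\beta_0\) by \(\beta\) in the correct direction in both the \(n\) and \(d\) terms. The slack \(\lambda<1/2\) then delivers strictness.
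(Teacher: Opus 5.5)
Your proof is correct and follows the paper's argument. The inner regime is the same: you reparametrise by $\xi=p(r)$, identify $p(r)/(rp'(r))=1+q_\lambda(\xi)$, use $\Xi_+(1+z)\ge z$ and $r\le\beta_0\xi<\beta\xi$, and gain strictness from the slack $\tfrac12-\lambda$. In the tail the paper uses only $\Xi_+\ge0$ and substitutes $r\le\beta_0p(r)$ into the $r$-terms to get $\tfrac12p(r)^2d_0-p(r)n_0>0$, whereas you show the $\Xi_+$ term vanishes and use monotonicity of the quadratic in $\xi$ to compare with $\xi=r/\beta_0$; this is an equivalent, slightly longer variant.
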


\begin{proof}
Let us first assume that \(r\le r_0\) and write \(q=p_0(r)\), so that \(r=\mathscr R_0(q)\). By \eqref{eq:R0-beta0}, \(r\le\beta_0q<\beta q\), while
\begin{equation}\label{eq:active-Xi-argument}
\frac{p(r)}{rp'(r)}
=\frac{q\mathscr R_0'(q)}{\mathscr R_0(q)}
=1+q_\lambda(q).
\end{equation}
The left-hand side of \eqref{eq:spectral-penalty-certificate} is bounded below by
\begin{equation}\label{eq:core-certificate-reduction}
\Xi_+(1+q_\lambda(q))
+\frac12q^2d_{\beta,S}(u)
-q\,n_{\beta,S}(u).
\end{equation}
For every \(z\ge0\),
\[
\Xi_+(1+z)\ge z,
\]
with strict inequality whenever \(z>0\). By the definition of \(q_\lambda\),
whether or not the positive part is active, we obtain
\[
q_\lambda(q)+\frac12q^2d_{\beta,S}(u)
-q\,n_{\beta,S}(u)
\ge\left(\frac12-\lambda\right)q^2d_{\beta,S}(u)>0.
\]
This establishes \eqref{eq:spectral-penalty-certificate} for \(r\le r_0\).

Let us now take \(r\ge r_0\) and put \(q=p(r)\). Since \(r\le\beta_0q\), the part of \eqref{eq:spectral-penalty-certificate} excluding \(\Xi_+\) is bounded below by
\[
\frac12q^2d_0-qn_0.
\]
The function \(p\) is increasing and \(p(r_0)>2n_0/d_0\), so this expression is
positive. With \(\Xi_+\ge0\), this shows \eqref{eq:spectral-penalty-certificate} for \(r\ge r_0\).
\end{proof}

\subsection{The smooth proof}
\label{sec:smooth}

\begin{proposition}[Smooth balanced spectral bound]
\label{prop:smooth-spectral-bound}
Let us assume that \(a,b\) are smooth and that \eqref{eq:balanced-standing} holds, and let \((S,\beta)\) be an admissible pair. It follows that
\begin{equation}\label{eq:smooth-spectral-bound}
D^2\Phi
\preceq
\frac{\exp\!\bigl(\cA_{\mathsf R}(S,\beta,a,b)\bigr)}\beta S.
\end{equation}
\end{proposition}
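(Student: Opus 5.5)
We run a joint maximum principle on the penalised translated corrector from \Cref{lem:quadratic-baseline-calculus} with \(\Gamma=0\). Fix \(\varepsilon>0\) and consider
\[
J_\varepsilon(x,m):=\mathfrak B_m(x)-\Theta(m)-\varepsilon\Phi(x),
\]
with \(\Theta(m)=\theta(\norm m_S)\) the matrix-radial penalty of \Cref{lem:spectral-penalty-properties}. The target claim is \(J_\varepsilon\le0\) on \(\R^d\times\R^d\) for every \(\varepsilon>0\).

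Letting \(\varepsilon\downarrow0\) then gives \(\mathfrak B_m(x)\le\Theta(m)\). Since \(\Theta(m)=\tfrac K2\norm m_S^2+o(\norm m^2)\) near \(m=0\) by \eqref{eq:Theta-origin-spectral}, a second-order expansion of \(\mathfrak B_{tm}(x)=\tfrac{t^2}2\ip{A(x)m}{m}+o(t^2)\) yields \(A(x)\preceq KS\). Letting \(K\downarrow \e^{E_\lambda^\infty}/\beta\) and then \(\lambda\uparrow\tfrac12\) gives \(E_\lambda^\infty\to\cA_{\mathsf R}(S,\beta,a,b)\) by monotone convergence (the substitution \(t=2\lambda s\) relates \eqref{eq:E-infty-spectral} to \eqref{eq:balanced-spectral-action}). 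Hence \eqref{eq:smooth-spectral-bound} follows.

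To show \(J_\varepsilon\le0\), first establish that a global maximum exists. \Cref{lem:global-modulus} gives \(\mathfrak B_m\le c_0\norm m+\tfrac{c_1}2\norm m^2\), which is dominated by the superquadratic \(\Theta\) from \eqref{eq:Theta-superquadratic}. Because \(T\) is a diffeomorphism onto \(\R^d\) (\Cref{prop:smooth-diffeo-general}), \Cref{lem:supercoercive} makes \(\varepsilon\Phi\) supercoercive in \(x\). Thus \(J_\varepsilon\to-\infty\) at infinity, uniformly enough to attain its supremum. If the maximiser has \(m=0\), then \(J_\varepsilon=-\varepsilon\Phi(x)\), and after normalising \(\Phi\) to be positive (adding a constant) we get \(J_\varepsilon\le0\) there. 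So assume the maximum is at \(m\ne0\) with \(J_\varepsilon>0\), and derive a contradiction.

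At that point, \Cref{lem:quadratic-baseline-calculus} with \(\Pi=D^2\Theta(m)\succ0\) gives \(\delta=\nabla\Theta(m)=p(r)S^{1/2}u\), the compatibility \eqref{eq:baseline-compatibility}, and \(\cL\mathfrak B_m-\varepsilon\cL\Phi+\cS(A,C,\Pi)\le0\). Insert the lower bound \eqref{eq:LG-width-lower}. With \(m=rS^{-1/2}u\) one has \(\ip{\mathsf Rm}{m}=r^2\gamma_S(u)\), \(\ip{\mathsf R\delta}{\delta}=p(r)^2\alpha_S(u)\), \(\omega_a(m)=r\omega_a(S^{-1/2}u)\) and \(\omega_b(\delta)=p(r)\omega_b(S^{1/2}u)\). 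Apply \Cref{lem:matrix-coercivity} with \(P=\Pi\) and test vector \(v=m\). Then \(\ip{Am}{m}=\ip{\delta-\varepsilon T(x)}{m}\), and \(\ip{\Pi m}{m}=r^2p'(r)\) by \eqref{eq:radial-Hessian-spectral}, while \(\ip{\delta}{m}=rp(r)\). The Rayleigh quotient is therefore \(p(r)/(rp'(r))+\bigO(\varepsilon\norm{T(x)})\). Using \(\cL\Phi\le d+L_b^2/(4\rho)\) from \eqref{eq:LPhi-general-bound}, we obtain
\[
\text{LHS of \eqref{eq:spectral-penalty-certificate}}\le\bigO(\varepsilon)+\text{error in }\Xi_+,
\]
which contradicts the strict positivity in \Cref{lem:spectral-penalty-certificate} once the errors vanish.

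The main obstacle is controlling the \(\varepsilon T(x)\) perturbation in the argument of \(\Xi_+\), since \(x\) is not a priori bounded. The plan is to argue along \(\varepsilon\to0\) by contradiction. Suppose \(\sup J_\varepsilon>0\) along a sequence \(\varepsilon\to0\). Show that \(\varepsilon\norm{T(x_\varepsilon)}\,\norm{m_\varepsilon}\to0\), using \(\varepsilon\Phi(x_\varepsilon)\le\mathfrak B_{m_\varepsilon}(x_\varepsilon)\) and the bounded \(m_\varepsilon\). The latter holds because \(J_\varepsilon>0\) forces \(\Theta(m)<c_0\norm m+\tfrac{c_1}2\norm m^2\), and the supercoercivity-type bound \(\norm{T(x)}\lesssim\Phi(x)/\norm x+C\) applies. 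Keep \(r_\varepsilon\) bounded away from zero or pass to the limit using continuity of the certificate, and use compactness in \((r,u)\). The certificate is uniformly positive on compact \(r\)-ranges away from \(0\); near \(r=0\) use the \(\Xi_+(1+z)\ge z\) margin \(\left(\tfrac12-\lambda\right)q^2d_{\beta,S}\), which is quadratic and dominates the \(\bigO(\varepsilon r)\) errors after dividing by \(r^2\).
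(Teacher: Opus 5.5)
Your overall architecture matches the paper's proof. You use the same functional \(J_\varepsilon\), \Cref{lem:quadratic-baseline-calculus} with \(\Gamma=0\), the width bound \eqref{eq:LG-width-lower}, \Cref{lem:matrix-coercivity} with \(v=m\), the certificate \Cref{lem:spectral-penalty-certificate}, the Taylor comparison at \(m=0\), and the limits in \(K\) and \(\lambda\). Your substitution even gives the exact identity \(E_\lambda^\infty=\cA_{\mathsf R}(S,\beta,a,b)/(2\lambda)\).

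The gap is in how the contradiction is organised. Your target claim ``\(J_\varepsilon\le0\) for each fixed \(\varepsilon\)'' cannot be obtained directly, because the error terms do not vanish at fixed \(\varepsilon\). Your fallback hypothesis, \(\sup J_{\varepsilon_k}>0\) along \(\varepsilon_k\to0\), gives no uniform lower bound on \(J_{\varepsilon_k}\) at the maximiser, so nothing prevents \(r_k\to0\). Your treatment of that regime fails because the errors are not \(\bigO(\varepsilon r)\):
\begin{itemize}
\item the contribution of \(-\varepsilon\cL\Phi\) is \(\varepsilon(d+L_b^2/(4\rho))\), independent of \(r\);
\item the shift of the \(\Xi_+\) argument is \(\varepsilon\ip{T(x_\varepsilon)}{m_\varepsilon}/(r^2p'(r))\), of size \(\varepsilon\norm{T(x_\varepsilon)}/r\) as \(r\to0\), since \(p'(0)=K\).
\end{itemize}
The margin \((\frac12-\lambda)p(r)^2d_{\beta,S}(u)\) is only of order \(r^2\), and you have no rate linking \(\varepsilon_k\) to \(r_k\).

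The missing step is to negate \(\mathfrak B_m\le\Theta\) itself. Fix \((x_0,m_0)\) with \(\mathfrak B_{m_0}(x_0)-\Theta(m_0)=\eta>0\). Then \(J_\varepsilon(x_\varepsilon,m_\varepsilon)\ge\eta-\varepsilon\Phi(x_0)\ge\eta/2\) for small \(\varepsilon\), and
\[
\frac\eta2\le c_0\norm{m_\varepsilon}+\frac{c_1}{2}\norm{m_\varepsilon}^2-\Theta(m_\varepsilon).
\]
The right-hand side tends to \(0\) as \(m_\varepsilon\to0\) and to \(-\infty\) as \(\norm{m_\varepsilon}\to\infty\). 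Hence \(r_\varepsilon\) stays in a compact subset of \((0,\infty)\), the Rayleigh-quotient shift is \(\bigO(\varepsilon\norm{T(x_\varepsilon)})\), and a subsequence \((r_\varepsilon,u_\varepsilon)\to(r_*,u_*)\) contradicts the certificate. No analysis near \(r=0\) is needed.

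Second, your bound \(\norm{T(x)}\lesssim\Phi(x)/\norm x+C\) does not follow from supercoercivity. Convexity controls only the radial part, \(\ip{T(x)}{x}\le\Phi(x)-\Phi(0)\). For example, take \(\Phi(x_1,x_2)=\frac12x_2^2+\phi(x_1)\) with \(\phi\) convex, even, and equal to \(t\log t\) for large \(t\). At \((R,\sqrt R)\) this gives \(\Phi/\norm x\approx\log R\) but \(\norm{\nabla\Phi}\ge\sqrt R\). The correct route, which the paper uses, has two steps. First, \(\varepsilon\Phi(x_\varepsilon)\) is bounded and \(\Phi(x)\ge R'\norm x-M_{R'}\) for every \(R'>0\) (from the proof of \Cref{lem:supercoercive}), so \(\varepsilon\norm{x_\varepsilon}\to0\). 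Second, the affine modulus \eqref{eq:global-affine-modulus} gives \(\norm{T(x_\varepsilon)}\le\norm{T(0)}+c_0+c_1\norm{x_\varepsilon}\), hence \(\varepsilon T(x_\varepsilon)\to0\).
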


\begin{proof}
We apply our joint maximum-principle argument from \cite[Sections 4 and 8.1]{Gwozdz} to the translated corrector, with the matrix-radial penalty constructed in \Cref{sec:spectral-penalty}. We fix \(0<\lambda<1/2\) and \(K\) that satisfies \eqref{eq:K-spectral-choice}, and let \(\Theta\) be the penalty from
\Cref{sec:spectral-penalty}. After an additive normalisation, we may assume that \(\Phi\ge0\).

We first claim that
\begin{equation}\label{eq:G-le-Theta-spectral}
\mathfrak B_m(x)\le\Theta(m)
\qquad(x,m\in\R^d).
\end{equation}
Suppose, to the contrary, that there exist \(x_0,m_0\) and \(\eta>0\) such that
\(\mathfrak B_{m_0}(x_0)-\Theta(m_0)=\eta\). For \(\varepsilon>0\), we maximise
\begin{equation}\label{eq:J-spectral}
J_\varepsilon(x,m)
:=\mathfrak B_m(x)-\Theta(m)-\varepsilon\Phi(x).
\end{equation}
By \eqref{eq:G-global-growth}, norm equivalence, the superquadratic growth of
\(\Theta\), and the supercoercivity of \(\Phi\), the maximum is attained at some
\((x_\varepsilon,m_\varepsilon)\). For all sufficiently small \(\varepsilon\), we have
\begin{equation}\label{eq:J-positive-spectral}
J_\varepsilon(x_\varepsilon,m_\varepsilon)\ge\frac\eta2>0.
\end{equation}
In particular, \(m_\varepsilon\ne0\).

We now set
\begin{equation}\label{eq:r-u-p-spectral}
r:=r_\varepsilon:=\norm{m_\varepsilon}_S,
\qquad
u:=u_\varepsilon:=\frac{S^{1/2}m_\varepsilon}{r_\varepsilon},
\qquad
p_\varepsilon:=p(r_\varepsilon).
\end{equation}
Since \(\Phi\ge0\), the estimates \eqref{eq:J-positive-spectral} and \eqref{eq:G-global-growth}, together with norm equivalence, give
\[
\frac{\eta}{2}
\le
c_0\norm{m_\varepsilon}
+\frac{c_1}{2}\norm{m_\varepsilon}^2
-\Theta(m_\varepsilon).
\]
By \eqref{eq:Theta-superquadratic}, the right-hand side tends to zero as
\(m_\varepsilon\to0\) and to \(-\infty\) as \(\norm{m_\varepsilon}\to\infty\). This implies that \(r_\varepsilon\) remains in a fixed compact subset of \((0,\infty)\) for all sufficiently small \(\varepsilon\). The same estimates imply
\[
0\le
\varepsilon\Phi(x_\varepsilon)
\le
\mathfrak B_{m_\varepsilon}(x_\varepsilon)
-\Theta(m_\varepsilon)-\frac{\eta}{2},
\]
whose right-hand side is uniformly bounded by \eqref{eq:G-global-growth} and
the previous bound on \(r_\varepsilon\). Supercoercivity now gives
\begin{equation}\label{eq:epsilon-x-spectral}
\varepsilon\norm{x_\varepsilon}\longrightarrow0.
\end{equation}
The affine-linear modulus \eqref{eq:global-affine-modulus} then yields
\begin{equation}\label{eq:epsilon-T-spectral}
\varepsilon T(x_\varepsilon)\longrightarrow0.
\end{equation}

We apply \Cref{lem:quadratic-baseline-calculus} with \(\Gamma=0\) and
\(F^0_m=\mathfrak B_m\). At the maximising pair, set
\begin{equation}\label{eq:max-matrices-spectral}
A:=D^2\Phi(x_\varepsilon),
\qquad
C:=D^2\Phi(x_\varepsilon+m_\varepsilon),
\qquad
\Pi:=D^2\Theta(m_\varepsilon).
\end{equation}
The lemma and \eqref{eq:gradient-Theta-spectral} give
\begin{equation}\label{eq:delta-stationary-spectral}
\delta:=T(x_\varepsilon+m_\varepsilon)-T(x_\varepsilon)
=p_\varepsilon S^{1/2}u.
\end{equation}
Also, the stationarity condition in the spatial variable gives
\begin{equation}\label{eq:x-stationary-spectral}
Am_\varepsilon=\delta-\varepsilon T(x_\varepsilon).
\end{equation}
It also gives the compatibility condition
\begin{equation}\label{eq:compatibility-spectral}
(C-A)z=0\qquad(z\in\Ker(\Pi-C))
\end{equation}
and \(C\preceq\Pi\). We set
\begin{equation}\label{eq:S-at-max-spectral}
\cS_\varepsilon
:=\cS(A,C,\Pi)
=\tr\!\left[
A^{-1}(C-A)(\Pi-C)^\dagger(C-A)
\right].
\end{equation}
The trace conclusion of the lemma is
\begin{equation}\label{eq:L-Schur-spectral}
\cL\mathfrak B_m-\varepsilon\cL\Phi+\cS_\varepsilon\le0.
\end{equation}

By \Cref{lem:translated-corrector-general}, \eqref{eq:LPhi-general-bound}, and \eqref{eq:delta-stationary-spectral}, we obtain
\begin{align}
&\cH(A^{-1/2}CA^{-1/2})+\cS_\varepsilon
+\frac12p_\varepsilon^2\alpha_S(u)-\frac12r^2\gamma_S(u)
\notag\\
&\qquad
-p_\varepsilon\omega_b(S^{1/2}u)
-r\,\omega_a(S^{-1/2}u)
\le
\varepsilon\left(d+\frac{L_b^2}{4\rho}\right).
\label{eq:upper-at-max-spectral}
\end{align}

We apply \Cref{lem:matrix-coercivity} with \(P=\Pi\) and \(v=m_\varepsilon\). Recall that its compatibility hypothesis is precisely \eqref{eq:compatibility-spectral}. We define
\begin{equation}\label{eq:y-epsilon-spectral}
y_\varepsilon
:=\frac{\ip{Am_\varepsilon}{m_\varepsilon}}
{\ip{\Pi m_\varepsilon}{m_\varepsilon}},
\end{equation}
and the lemma gives
\begin{equation}\label{eq:lower-at-max-spectral}
\cH(A^{-1/2}CA^{-1/2})+\cS_\varepsilon
\ge\Xi_+(y_\varepsilon).
\end{equation}
By \eqref{eq:radial-Hessian-spectral},
\[
\ip{\Pi m_\varepsilon}{m_\varepsilon}=r^2p'(r).
\]
We take the scalar product of \eqref{eq:x-stationary-spectral} with
\(m_\varepsilon=rS^{-1/2}u\) to obtain
\[
\ip{Am_\varepsilon}{m_\varepsilon}
=rp_\varepsilon-\varepsilon\ip{T(x_\varepsilon)}{m_\varepsilon}.
\]
After passing to a subsequence, we may assume that
\[
r_\varepsilon\to r_*>0,
\qquad
u_\varepsilon\to u_*\in\mathbb S^{d-1}.
\]
It follows from \eqref{eq:epsilon-T-spectral} that
\begin{equation}\label{eq:y-limit-spectral}
y_\varepsilon\longrightarrow\frac{p(r_*)}{r_*p'(r_*)}.
\end{equation}
It suffices to combine \eqref{eq:upper-at-max-spectral} and \eqref{eq:lower-at-max-spectral} to get
\begin{align*}
&\Xi_+(y_\varepsilon)
+\frac12p(r_\varepsilon)^2\alpha_S(u_\varepsilon)
-\frac12r_\varepsilon^2\gamma_S(u_\varepsilon)\\
&\qquad
-p(r_\varepsilon)\omega_b(S^{1/2}u_\varepsilon)
-r_\varepsilon\omega_a(S^{-1/2}u_\varepsilon)
\le
\varepsilon\left(d+\frac{L_b^2}{4\rho}\right).
\end{align*}
We pass to the subsequence above and use \eqref{eq:y-limit-spectral}, which
gives
\begin{align*}
&\Xi_+\!\left(\frac{p(r_*)}{r_*p'(r_*)}\right)
+\frac12p(r_*)^2\alpha_S(u_*)
-\frac12r_*^2\gamma_S(u_*)\\
&\qquad
-p(r_*)\omega_b(S^{1/2}u_*)
-r_*\omega_a(S^{-1/2}u_*)
\le0,
\end{align*}
but this contradicts \Cref{lem:spectral-penalty-certificate}, so we conclude that
\eqref{eq:G-le-Theta-spectral} holds.

We fix \(x,h\in\R^d\). As \(t\to0\),
\[
\mathfrak B_{th}(x)
=\frac{t^2}{2}\ip{D^2\Phi(x)h}{h}+o(t^2),
\]
while \eqref{eq:Theta-origin-spectral} gives
\[
\Theta(th)=\frac{Kt^2}{2}\ip{Sh}{h}+o(t^2).
\]
It follows from the two expansions that \(D^2\Phi\preceq KS\). We first let
\(K\downarrow\beta^{-1}\e^{E_\lambda^\infty}\) and then let \(\lambda\uparrow1/2\). We also fix \(\lambda_0\in(0,1/2)\). For \(\lambda\in[\lambda_0,1/2)\), the integrand in
\eqref{eq:E-infty-spectral} is bounded by the integrable integrand associated
with \(\lambda_0\). By dominated convergence,
\[
E_\lambda^\infty
\downarrow
\cA_{\mathsf R}(S,\beta,a,b)
\qquad\text{as }\lambda\uparrow\frac12.
\]
This proves the required estimate.
\end{proof}

\subsection{Approximation and original coordinates}
\label{sec:general-proof}

We approximate the source by a mollified Moreau envelope with a compensating
constant, and we analyse the target by direct mollification. Both approximations preserve the required one-sided curvature bounds and give integrable dominating functions. The source construction refines our result from \cite[proof of Lemma 7.1]{Gwozdz}.

We first work in balanced coordinates and assume that \(V,W\) are finite
continuous functions and
\begin{equation}\label{eq:balanced-distributional-standing}
D^2V\preceq\mathsf R,
\qquad
D^2W\succeq\mathsf R
\end{equation}
in the sense of distributions. Let us set
\[
q_{\mathsf R}(x):=\frac12\ip{\mathsf Rx}{x},
\qquad
f:=q_{\mathsf R}-V,
\qquad
g:=W-q_{\mathsf R}.
\]
By the distributional characterisation of convexity,
\eqref{eq:balanced-distributional-standing} implies that the continuous functions \(f\) and \(g\) are finite and convex. Let \(\zeta\in C_c^\infty(\R^d)\) be nonnegative and symmetric, with \(\int_{\R^d}\zeta(x)\,\dd x=1\), and, for \(\delta>0\), write
\[
\zeta_\delta(x):=\delta^{-d}\zeta(x/\delta),
\qquad
m_2:=\int_{\R^d}\norm z^2\zeta(z)\dd z.
\]
We further choose \(\varepsilon_n,\delta_n\downarrow0\) such that
\(\delta_n^2/\varepsilon_n\to0\). For \(\varepsilon>0\), let
\[
f_\varepsilon(x)
:=
\inf_{z\in\R^d}
\left\{
f(z)+\frac{\norm{x-z}^2}{2\varepsilon}
\right\}
\]
be the Moreau envelope of \(f\) \cite{Moreau}. Since \(f\) is finite and convex,
\(f_\varepsilon\) is finite and convex, belongs to \(C^{1,1}(\R^d)\), and satisfies
\[
0\preceq D^2f_\varepsilon
\preceq\varepsilon^{-1}\Id
\]
in the sense of distributions. We define
\begin{equation}\label{eq:one-sided-source-approx}
\widehat f_n
:=
f_{\varepsilon_n}*\zeta_{\delta_n}
-\frac{m_2\delta_n^2}{2\varepsilon_n},
\qquad
V_n:=q_{\mathsf R}-\widehat f_n.
\end{equation}
Observe that the function \(\widehat f_n\) is smooth and convex. Since
\(\nabla f_{\varepsilon_n}\) is \(\varepsilon_n^{-1}\)-Lipschitz and \(\zeta\) is
symmetric, we obtain
\[
f_{\varepsilon_n}*\zeta_{\delta_n}
\le f_{\varepsilon_n}
+\frac{m_2\delta_n^2}{2\varepsilon_n},
\]
and the mollifier \(\zeta\) is centred. Jensen's inequality and the previous
smoothness estimate give
\[
f_{\varepsilon_n}
-\frac{m_2\delta_n^2}{2\varepsilon_n}
\le
\widehat f_n
\le
f_{\varepsilon_n}.
\]
Since \(\delta_n^2/\varepsilon_n\to0\) and \(f_{\varepsilon_n}\to f\) pointwise, we obtain
\(\widehat f_n\to f\) pointwise. We also have \(\widehat f_n\le f_{\varepsilon_n}\le f\), so
\begin{equation}\label{eq:source-approx-properties}
V_n\ge V,
\qquad
D^2V_n\preceq\mathsf R.
\end{equation}
For the target, set
\begin{equation}\label{eq:one-sided-target-approx}
W_n:=q_{\mathsf R}+g*\zeta_{\delta_n}.
\end{equation}
Jensen's inequality gives \(g*\zeta_{\delta_n}\ge g\). We have
\begin{equation}\label{eq:target-approx-properties}
W_n\ge W,
\qquad
D^2W_n\succeq\mathsf R.
\end{equation}

We next prove convergence of the approximating marginals. To this end, set
\[
a_n:=a*\zeta_{\delta_n},
\qquad
b_n:=b*\zeta_{\delta_n}.
\]
We define the normalised approximating marginals by
\[
Z_{\mu,n}:=\int_{\R^d}\e^{-V_n(x)-a_n(x)}\dd x,
\qquad
Z_{\nu,n}:=\int_{\R^d}\e^{-W_n(y)-b_n(y)}\dd y,
\]
\[
\dd\mu_n(x):=Z_{\mu,n}^{-1}\e^{-V_n(x)-a_n(x)}\dd x,
\qquad
\dd\nu_n(y):=Z_{\nu,n}^{-1}\e^{-W_n(y)-b_n(y)}\dd y.
\]
By construction,
\begin{equation}\label{eq:mollified-width-control}
\omega_{a_n}\le\omega_a,
\qquad
\omega_{b_n}\le\omega_b.
\end{equation}
Moreover, the approximations satisfy \(V_n\to V\) and \(W_n\to W\) pointwise, while \(a_n\to a\) and \(b_n\to b\) uniformly. There also exists a constant \(C<\infty\),
independent of \(n\), such that
\[
(1+\norm x^2)\e^{-V_n(x)-a_n(x)}
\le
C(1+\norm x^2)\e^{-V(x)-a(x)}
\]
and
\[
(1+\norm y^2)\e^{-W_n(y)-b_n(y)}
\le
C(1+\norm y^2)\e^{-W(y)-b(y)}.
\]
These inequalities follow from \(V_n\ge V\), \(W_n\ge W\), and the uniform
convergence of \(a_n\) and \(b_n\). Dominated convergence gives convergence of
the normalising constants and of the normalised densities in weighted \(L^1\). Let \(Z_{\mu,a}\) and \(Z_{\nu,b}\) denote the limiting source and target normalising constants, respectively. In particular,
\[
\int_{\R^d}(1+\norm x^2)
\left|
\frac{\e^{-V_n-a_n}}{Z_{\mu,n}}
-
\frac{\e^{-V-a}}{Z_{\mu,a}}
\right|\dd x
\longrightarrow0.
\]
Note that the target integral also tends to zero. We denote
\[
r_{\mathrm{s},n}(z)
:=\frac{\e^{-V_n(z)-a_n(z)}}{Z_{\mu,n}},
\qquad
r_{\mathrm{s}}(z)
:=\frac{\e^{-V(z)-a(z)}}{Z_{\mu,a}},
\]
and
\[
r_{\mathrm{t},n}(z)
:=\frac{\e^{-W_n(z)-b_n(z)}}{Z_{\nu,n}},
\qquad
r_{\mathrm{t}}(z)
:=\frac{\e^{-W(z)-b(z)}}{Z_{\nu,b}}.
\]
For \(\iota\in\{\mathrm{s},\mathrm{t}\}\), we couple the common part of
\(r_{\iota,n}\) and \(r_\iota\) identically, and the two residual parts arbitrarily. This coupling yields
\[
W_2^2(r_{\iota,n}\dd z,r_\iota\dd z)
\le
2\int_{\R^d}\norm z^2
|r_{\iota,n}(z)-r_\iota(z)|\dd z.
\]
The above estimates imply that the approximating marginals converge both
in total variation and in \(W_2\).

We now pass to the Brenier representatives. Let us fix an admissible pair
\((S,\beta)\) and normalise the Brenier potentials for the smooth approximating data by \(\Phi_n(0)=0\). We combine \Cref{prop:smooth-spectral-bound} with
\eqref{eq:mollified-width-control} to obtain
\begin{equation}\label{eq:Hessian-n-spectral}
K_{S,\beta}:=\frac{\e^{\cA_{\mathsf R}(S,\beta,a,b)}}{\beta},
\qquad
0\preceq D^2\Phi_n
\preceq
K_{S,\beta}S.
\end{equation}

In particular, the Euclidean Lipschitz constants of \(T_n=\nabla\Phi_n\) are
uniformly bounded by \(K_{S,\beta}\opnorm S\). We also have
\[
\norm{T_n(0)}
\le\int\norm y\,\dd\nu_n(y)
+K_{S,\beta}\opnorm S\int\norm x\,\dd\mu_n(x),
\]
so \(T_n(0)\) is uniformly bounded, and the stability lemma
\Cref{lem:uniform-Lipschitz-stability} applies. Since the limiting source density is strictly positive on \(\R^d\), the lemma gives
\[
T_n\longrightarrow T
\qquad\text{and}\qquad
\Phi_n\longrightarrow\Phi
\]
locally uniformly. Here, \(T=\nabla\Phi\) is the globally continuous Brenier
representative from \(\mu_a\) to \(\nu_b\), and \(\Phi(0)=0\).

It remains to pass the Hessian bound to the limit. Observe that the matrix inequality
\eqref{eq:Hessian-n-spectral} is equivalent to
\[
\Phi_n(x+h)+\Phi_n(x-h)-2\Phi_n(x)
\le K_{S,\beta}\ip{Sh}{h}.
\]
We pass to the locally uniform limit and apply \Cref{lem:C11}. It follows that
\begin{equation}\label{eq:balanced-final-spectral}
D^2\Phi\preceq K_{S,\beta}S.
\end{equation}

From this point, we again use \(V,W,a,b,T,\Phi\) and the associated marginals for the original-coordinate objects, and we keep tildes for their counterparts in balanced coordinates.

Let \(H\succ0\) satisfy \(H\mathsf RH\succ\mathsf R\). We choose \(0<\beta<1\) and set \(S=\beta H\), so that \((S,\beta)\) is admissible. By \eqref{eq:balanced-widths}, the positive homogeneity of the width gauges, and the change of variables \(\tau=t\sqrt\beta\), we have
\[
d_{\beta,S}(u)=\beta\ip{\mathsf D_Hu}{u},
\qquad
\omega_{\widetilde b}(S^{1/2}u)+\beta\omega_{\widetilde a}(S^{-1/2}u)
=\sqrt\beta\,n_{Q,P}^{a,b,\mathrm{red}}(H,u),
\]
and obtain
\[
\cA_{\mathsf R}(S,\beta,\widetilde a,\widetilde b)
=\cA_{Q,P}^{\mathrm{red}}(H,a,b).
\]
The balanced estimate \eqref{eq:balanced-final-spectral} now gives
\[
D^2\widetilde\Phi
\preceq
\e^{\cA_{Q,P}^{\mathrm{red}}(H,a,b)}H.
\]
Using \eqref{eq:matrix-scaled-map}, we obtain
\[
D^2\Phi
\preceq
\e^{\cA_{Q,P}^{\mathrm{red}}(H,a,b)}
G^{1/2}HG^{1/2}.
\]
Let us set \(M:=M_H\), so that
\[
0\preceq D^2\Phi\preceq M
\]
in the sense of matrix-valued distributions and almost everywhere. By
\Cref{lem:C11}, we also have \(\Phi\in C^{1,1}(\R^d)\). Finally, the metric estimate \eqref{eq:spectral-metric-map-main} follows from \Cref{lem:hessian-bound-metric}. We conclude this section by deriving the one-sided corollaries from \Cref{sec:geometry}.

\begin{proof}[Proof of \Cref{cor:affine-sharpness}]
Recall that affine functions have zero gradient width. We take \(H=c\Id\) with \(c>1\)
in the reduced description. In this case, the action vanishes and the estimate becomes
\(D^2\Phi\preceq cG\). We now let \(c\downarrow1\) and obtain \eqref{eq:unperturbed-matrix-sharp}. For the Gaussian pair, observe that the map \(T(x)=Gx\) sends \(\mathcal N(0,Q^{-1})\) to \(\mathcal N(0,P^{-1})\), since \(GQ^{-1}G=P^{-1}\). We conclude that
\(D^2\Phi=G\).
\end{proof}

\begin{proof}[Proof of \Cref{cor:matrix-caffarelli}]
We take \(H=\beta^{-1}\Id\), where \(0<\beta<1\), in \Cref{thm:spectral-caffarelli}. It holds that
\[
\mathsf D_H=(\beta^{-1}-\beta)\mathsf R,
\qquad
n_{Q,P}^{a,b,\mathrm{red}}(H,u)
\le2\beta^{-1/2}\ell_b+2\beta^{1/2}\ell_a.
\]
The definitions give
\[
\cA_{Q,P}^{\mathrm{red}}(H,a,b)
\le
\frac{4(\ell_b+\beta\ell_a)^2}{\rho(1-\beta^2)},
\]
and the majorant is \(\beta^{-1}\exp(\cA_{Q,P}^{\mathrm{red}}(H,a,b))G\). It now suffices to take the infimum over \(\beta\) to finish the claim.
\end{proof}

\begin{proof}[Proof of \Cref{cor:perturbative-caffarelli}]
Let us take \(Q=\Lambda\Id\) and \(P=\kappa\Id\), then
\[
G=\sqrt{\Lambda/\kappa}\,\Id,
\qquad
\mathsf R=\sqrt{\Lambda\kappa}\,\Id,
\qquad
\rho=\sqrt{\Lambda\kappa}.
\]
We also have
\[
\Lip_G(a)=\frac{\Lip(a)}{(\Lambda/\kappa)^{1/4}},
\qquad
\Lip_{G^{-1}}(b)=\Lip(b)(\Lambda/\kappa)^{1/4}.
\]
We substitute these identities into \eqref{eq:def-Crho}, and obtain \eqref{eq:perturbative-upper-main}.
\end{proof}

\section{Two-sided estimates and Gaussian perturbations}
\label{sec:two-sided-gaussian}

In this section, we assume that \(V,W:\R^d\to\R\) are finite and continuous, that \(a,b:\R^d\to\R\) are globally Lipschitz, and that the measures in \eqref{eq:matrix-marginals-main} are probability measures with finite second moments. Let \(T=\nabla\Phi\) denote their quadratic-cost Brenier map.

\begin{lemma}[Forward--reverse transfer]
\label{lem:forward-reverse-transfer}
Under the hypotheses from this section, let \(U=\nabla\Psi\) be the globally continuous Brenier representative from \(\nu_b\) to \(\mu_a\). We assume that \(\Phi,\Psi\in C^{1,1}(\R^d)\) and that, for some \(M_+,M_-\in\mathrm{Sym}_d^{++}\),
\[
0\preceq D^2\Phi\preceq M_+,
\qquad
0\preceq D^2\Psi\preceq M_-.
\]
It follows that
\begin{equation}\label{eq:transfer-Hessian}
M_-^{-1}\preceq D^2\Phi\preceq M_+,
\end{equation}
and
\begin{align}
\norm{T(x)-T(y)}_{M_+^{-1}}
&\le\norm{x-y}_{M_+},
\label{eq:transfer-upper-map}\\
\norm{T(x)-T(y)}_{M_-}
&\ge\norm{x-y}_{M_-^{-1}}
\label{eq:transfer-lower-map}
\end{align}
for all \(x,y\in\R^d\).
\end{lemma}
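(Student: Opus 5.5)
The plan is to show that \(T\) and \(U\) are mutually inverse bijections of \(\R^d\). The lower Hessian bound then follows by inverting the upper bound for \(D^2\Psi\) pointwise, and the two metric inequalities follow from \Cref{lem:hessian-bound-metric} and from monotonicity via \Cref{lem:C11}\ref{lem:C11-lower}.

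\textbf{Step 1: \(T\) and \(U\) are inverse to each other.} Brenier--McCann uniqueness gives \(\Psi=\Phi^*\) up to an additive constant. Hence \(U\circ T=\Id\) \(\mu_a\)-almost everywhere and \(T\circ U=\Id\) \(\nu_b\)-almost everywhere. Both maps are continuous, and both densities are strictly positive on \(\R^d\), because \(V,W,a,b\) are finite and continuous. Therefore these identities hold everywhere. So \(T\) is a bijection of \(\R^d\), and \(\Phi^*\) is finite and \(C^{1,1}\), with \(\nabla\Phi^*=U\).

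\textbf{Step 2: the lower Hessian bound.} I would avoid pointwise inversion of \(D^2\Phi\) at points where it may be singular, and argue with conjugates instead. Since \(D^2\Psi\preceq M_-\) and \(\Psi\in C^{1,1}\), the standard upper quadratic bound gives
\[
\Psi(y+k)\le\Psi(y)+\ip{U(y)}{k}+\tfrac12\ip{M_-k}{k}.
\]
Taking Fenchel conjugates, and using \(\Psi^*=\Phi\) up to a constant, yields the lower quadratic bound
\[
\Phi(x+h)\ge\Phi(x)+\ip{T(x)}{h}+\tfrac12\ip{M_-^{-1}h}{h},
\]
with \(y=T(x)\). This is the usual duality between \(M\)-smoothness and \(M^{-1}\)-strong convexity. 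It requires \(x=U(y)\), which holds for every \(x\) by Step 1.

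Symmetrising this inequality in \(\pm h\) gives the reverse second-difference bound with \(N=M_-^{-1}\). \Cref{lem:C11}\ref{lem:C11-lower} then yields \(D^2\Phi\succeq M_-^{-1}\) distributionally and almost everywhere. Combined with the hypothesis \(D^2\Phi\preceq M_+\), this gives \eqref{eq:transfer-Hessian}.

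\textbf{Step 3: the two metric inequalities.} The upper inequality \eqref{eq:transfer-upper-map} is \Cref{lem:hessian-bound-metric} applied with \(M=M_+\).

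For \eqref{eq:transfer-lower-map}, apply \Cref{lem:hessian-bound-metric} to \(\Psi\) with \(M=M_-\). This gives
\[
\norm{U(y')-U(y)}_{M_-^{-1}}\le\norm{y'-y}_{M_-}.
\]
Now substitute \(y=T(x)\) and \(y'=T(x')\), and use \(U\circ T=\Id\) from Step 1. The result is exactly \eqref{eq:transfer-lower-map}.

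\textbf{Main obstacle.} The delicate point is Step 1: upgrading the almost-everywhere inverse relation to an everywhere identity, and identifying \(\Psi\) with \(\Phi^*\) as finite functions. Continuity together with full support of both marginals should settle it. First, \(\{x: U(T(x))\neq x\}\) is open, since \(U\circ T\) is continuous. Second, this set is \(\mu_a\)-null. An open null set for a measure with positive density is empty, so \(U\circ T=\Id\) everywhere, and symmetrically \(T\circ U=\Id\).
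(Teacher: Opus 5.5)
Your proposal is correct and follows the paper's proof in its essentials. Like the paper, you upgrade the almost-everywhere inverse relations to everywhere identities using continuity and full support, conjugate the \(M_-\) upper quadratic bound for \(\Psi=\Phi^*\) into the lower bound \(\Phi(x+h)\ge\Phi(x)+\ip{T(x)}{h}+\frac12\ip{M_-^{-1}h}{h}\), and obtain the upper map estimate from \Cref{lem:hessian-bound-metric}. The deviations are minor: you read off finiteness of \(\Phi^*\) directly from surjectivity of \(T\) (the Fenchel-equality argument inside \Cref{lem:supercoercive}), and you get \eqref{eq:transfer-lower-map} by applying \Cref{lem:hessian-bound-metric} to \(\Psi\) and composing with \(U\circ T=\Id\); the paper instead uses the strong monotonicity \(\ip{T(x)-T(y)}{x-y}\ge\norm{x-y}_{M_-^{-1}}^2\) together with dual Cauchy--Schwarz, and your variant has the small advantage of not depending on Step 2.
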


\begin{proof}
The uniqueness of the two optimal plans gives
\[
U\circ T=\Id
\quad\text{\(\mu_a\)-almost everywhere},
\qquad
T\circ U=\Id
\quad\text{\(\nu_b\)-almost everywhere}.
\]
Given that both marginal densities are strictly positive on \(\R^d\), every nonempty open set has positive measure for the corresponding marginal. Since \(T\) and \(U\) are continuous, the two identities hold everywhere. We conclude that \(T\) is a homeomorphism with inverse \(U\).

As \(T(\R^d)=\R^d\) holds, \Cref{lem:supercoercive} implies that \(\Phi\) is
supercoercive. In particular, \(\Phi^*\) is finite on \(\R^d\). The subgradient-inversion identity now gives \(\nabla\Phi^*=U=\nabla\Psi\). After changing \(\Psi\) by an additive constant, we take \(\Psi=\Phi^*\). We fix \(x_0\in\R^d\) and put \(y_0=\nabla\Phi(x_0)\). Since \(x_0=\nabla\Psi(y_0)\) and \(D^2\Psi\preceq M_-\), the upper Hessian bound gives
\[
\Psi(y)
\le
\Psi(y_0)+\ip{x_0}{y-y_0}
+\frac12\ip{M_-(y-y_0)}{y-y_0}
\qquad(y\in\R^d).
\]
We use this estimate in \(\Phi=\Psi^*\) and obtain
\[
\Phi(x)\ge
\Phi(x_0)+\ip{y_0}{x-x_0}
+\frac12\ip{M_-^{-1}(x-x_0)}{x-x_0}.
\]
It follows that \(x\mapsto\Phi(x)-\ip{M_-^{-1}x}{x}/2\) is convex, which proves \eqref{eq:transfer-Hessian}.

The upper metric estimate follows by applying \Cref{lem:hessian-bound-metric} to \(\Phi\) and \(M_+\). On the other hand, the lower Hessian bound gives
\[
\ip{T(x)-T(y)}{x-y}\ge\norm{x-y}_{M_-^{-1}}^2,
\]
and dual Cauchy--Schwarz yields \eqref{eq:transfer-lower-map}, which completes the proof.
\end{proof}

\begin{corollary}[Spectral bi-Lipschitz estimate]
\label{cor:spectral-bilip}
Let us fix \(P_0,Q_0,P_1,Q_1\in\mathrm{Sym}_d^{++}\), and assume that
\[
P_0\preceq D^2V\preceq Q_0,
\qquad
P_1\preceq D^2W\preceq Q_1.
\]
Let \(G_+,G_-\succ0\) be the solutions of
\[
G_+P_1G_+=Q_0,
\qquad
G_-P_0G_-=Q_1,
\]
and set
\[
\mathsf R_+:=G_+^{-1/2}Q_0G_+^{-1/2},
\qquad
\mathsf R_-:=G_-^{-1/2}Q_1G_-^{-1/2}.
\]
For the forward data \((Q_0,P_1,a,b)\), we choose a reduced metric \(H_+\in\mathrm{Sym}_d^{++}\) such that \(H_+\mathsf R_+H_+\succ\mathsf R_+\), and let \(M_+\) be the corresponding matrix from \eqref{eq:M-H-main}. For the reverse data \((Q_1,P_0,b,a)\), choose a reduced metric \(H_-\in\mathrm{Sym}_d^{++}\) such that
\(H_-\mathsf R_-H_-\succ\mathsf R_-\), and define \(M_-\) in the same way. We
then obtain
\begin{equation}\label{eq:spectral-bilip-Hessian}
M_-^{-1}\preceq D^2\Phi\preceq M_+
\end{equation}
in distributions and almost everywhere. Moreover,
\begin{align}
\norm{T(x)-T(y)}_{M_+^{-1}}
&\le\norm{x-y}_{M_+}
\qquad(x,y\in\R^d),
\label{eq:spectral-bilip-upper-map}\\
\norm{T(x)-T(y)}_{M_-}
&\ge\norm{x-y}_{M_-^{-1}}
\qquad(x,y\in\R^d).
\label{eq:spectral-bilip-lower-map}
\end{align}
\end{corollary}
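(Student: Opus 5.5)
The plan is to apply \Cref{thm:spectral-caffarelli} twice, once to the forward transport and once to the reverse transport, and then combine the two one-sided bounds through \Cref{lem:forward-reverse-transfer}.

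\emph{Forward bound.} The forward problem transports \(\mu_a\propto\e^{-V-a}\) to \(\nu_b\propto\e^{-W-b}\). Its relevant curvature hypotheses are \(D^2V\preceq Q_0\) and \(D^2W\succeq P_1\), which are exactly \eqref{eq:matrix-curvature-main} with \((Q,P)=(Q_0,P_1)\). The corresponding matrix from \eqref{eq:G-main} is \(G_+\), and its balanced matrix is \(\mathsf R_+\). For the chosen admissible \(H_+\), \Cref{thm:spectral-caffarelli} gives three things:
\begin{itemize}
\item \(\Phi\in C^{1,1}(\R^d)\);
\item \(0\preceq D^2\Phi\preceq M_+\) in distributions and almost everywhere;
\item a globally continuous representative \(T=\nabla\Phi\).
\end{itemize}

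\emph{Reverse bound.} The reverse problem transports \(\nu_b\) to \(\mu_a\), so the source potential is now \(W+b\) and the target potential is \(V+a\). The relevant hypotheses are \(D^2W\preceq Q_1\) and \(D^2V\succeq P_0\). These are again \eqref{eq:matrix-curvature-main}, now with data \((Q,P,a,b)\) replaced by \((Q_1,P_0,b,a)\). The normalising constants and second moments are the same as before, so the remaining hypotheses of the theorem still hold. The tensor is \(G_-\) with \(G_-P_0G_-=Q_1\), and the balanced matrix is \(\mathsf R_-\). For the admissible \(H_-\), \Cref{thm:spectral-caffarelli} yields a reverse Brenier potential \(\Psi\in C^{1,1}(\R^d)\) with \(0\preceq D^2\Psi\preceq M_-\), together with a globally continuous Brenier representative \(U=\nabla\Psi\) from \(\nu_b\) to \(\mu_a\). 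Note that \(M_-\) is built with the roles of \(a\) and \(b\) swapped inside \(\cA^{\mathrm{red}}\), which is what the corollary specifies.

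\emph{Combining.} With both \(C^{1,1}\) potentials and both upper Hessian bounds available, every hypothesis of \Cref{lem:forward-reverse-transfer} is verified, since the section's standing assumptions are in force. That lemma then gives the two-sided Hessian bound \eqref{eq:spectral-bilip-Hessian} and the two metric estimates \eqref{eq:spectral-bilip-upper-map} and \eqref{eq:spectral-bilip-lower-map}.

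\emph{Main obstacle.} The only delicate point is bookkeeping: the reverse application must use the correct curvature pair. The upper bound must fall on the new source potential \(W\) and the lower bound on the new target potential \(V\); the two-sided hypotheses \(P_0\preceq D^2V\preceq Q_0\) and \(P_1\preceq D^2W\preceq Q_1\) exist precisely to provide this. One must also check that the reverse representative \(U\) is the globally continuous one that the lemma requires, and \Cref{thm:spectral-caffarelli} supplies exactly that.
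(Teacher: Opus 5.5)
Your proposal is correct and matches the paper's proof. Apply \Cref{thm:spectral-caffarelli} to the forward data \((Q_0,P_1,a,b)\) to get \(D^2\Phi\preceq M_+\), and to the reverse data \((Q_1,P_0,b,a)\) to get \(D^2\Psi\preceq M_-\) with the continuous reverse representative \(U=\nabla\Psi\); then conclude with \Cref{lem:forward-reverse-transfer}, as the paper does, and your bookkeeping of which curvature bound serves which role is exactly what the paper leaves implicit.
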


For \(0<\beta_+,\beta_-<1\), observe that the choices \(H_+=\beta_+^{-1}\Id\) and \(H_-=\beta_-^{-1}\Id\) in the forward and reverse reduced problems correspond to \(S_+=S_-=\Id\) in the balanced parametrisation.

\begin{corollary}[Ellipsoidal bi-Lipschitz estimate]
\label{cor:matrix-bilip}
Under the two-sided curvature hypotheses of \Cref{cor:spectral-bilip}, let
\(G_\pm\) and \(\mathsf R_\pm\) be as above, and define
\[
\rho_+:=\lambda_{\min}(\mathsf R_+),
\quad
c_+:=\cC_{\rho_+}
\bigl(\Lip_{G_+}(a),\Lip_{G_+^{-1}}(b)\bigr).
\]
We also set
\[
\rho_-:=\lambda_{\min}(\mathsf R_-),
\quad
c_-:=\cC_{\rho_-}
\bigl(\Lip_{G_-}(b),\Lip_{G_-^{-1}}(a)\bigr).
\]
It follows that
\begin{equation}\label{eq:matrix-bilip-Hessian}
c_-^{-1}G_-^{-1}\preceq D^2\Phi\preceq c_+G_+,
\end{equation}
and
\begin{align}
\norm{T(x)-T(y)}_{G_+^{-1}}
&\le c_+\norm{x-y}_{G_+}
\qquad(x,y\in\R^d),
\label{eq:matrix-bilip-upper-map}\\
\norm{T(x)-T(y)}_{G_-}
&\ge c_-^{-1}\norm{x-y}_{G_-^{-1}}
\qquad(x,y\in\R^d).
\label{eq:matrix-bilip-lower-map}
\end{align}
\end{corollary}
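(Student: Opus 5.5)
The plan is to derive the ellipsoidal estimate from \Cref{cor:spectral-bilip} by choosing scalar reduced metrics in both the forward and reverse problems. This is the same specialisation that turns \Cref{thm:spectral-caffarelli} into \Cref{cor:matrix-caffarelli}, carried out on each side.

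\textbf{Forward bound.} For the forward data \((Q_0,P_1,a,b)\), take \(H_+=\beta_+^{-1}\Id\) with \(0<\beta_+<1\). Then \(H_+\mathsf R_+H_+=\beta_+^{-2}\mathsf R_+\succ\mathsf R_+\), so \(H_+\) is admissible. We also have \(\mathsf D_{H_+}=(\beta_+^{-1}-\beta_+)\mathsf R_+\), which gives \(\ip{\mathsf D_{H_+}u}{u}\ge(\beta_+^{-1}-\beta_+)\rho_+\). For the widths, combine \Cref{lem:width-Bregman}(iii) with the change of variables. This gives \(\omega_b(G_+^{1/2}h)\le 2\Lip_{G_+^{-1}}(b)\norm h\): the function \(\widetilde b(h)=b(G_+^{1/2}h)\) is Euclidean-Lipschitz with constant \(\Lip_{G_+^{-1}}(b)\), because \(\norm{G_+^{1/2}h}_{G_+^{-1}}=\norm h\). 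Likewise \(\omega_a(G_+^{-1/2}h)\le2\Lip_{G_+}(a)\norm h\).

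Hence \(n^{\mathrm{red}}\le 2\beta_+^{-1/2}\ell_b+2\beta_+^{1/2}\ell_a\), with \(\ell_a=\Lip_{G_+}(a)\) and \(\ell_b=\Lip_{G_+^{-1}}(b)\). Integrating the elementary identity \(\int_0^\infty(n-tD/2)_+\dd t=n^2/D\) gives
\[
\cA^{\mathrm{red}}\le\frac{4(\ell_b+\beta_+\ell_a)^2}{\rho_+(1-\beta_+^2)}.
\]
Consequently \(M_+\preceq\beta_+^{-1}\exp(\cdot)\,G_+\). This is exactly the computation already done in the proof of \Cref{cor:matrix-caffarelli}, so I simply cite it.

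\textbf{Reverse bound.} Symmetrically, for the reverse data \((Q_1,P_0,b,a)\) with \(H_-=\beta_-^{-1}\Id\), the roles of \(a\) and \(b\) swap. This yields \(M_-\preceq\beta_-^{-1}\exp\bigl(4(\Lip_{G_-^{-1}}(a)+\beta_-\Lip_{G_-}(b))^2/(\rho_-(1-\beta_-^2))\bigr)G_-\).

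\textbf{Assembling the Hessian bounds.} \Cref{cor:spectral-bilip} then gives \(M_-^{-1}\preceq D^2\Phi\preceq M_+\). Since matrix inversion is order-reversing on \(\mathrm{Sym}_d^{++}\), the scalar majorant \(M_-\preceq k_-G_-\) implies \(M_-^{-1}\succeq k_-^{-1}G_-^{-1}\). Taking infima over \(\beta_\pm\) separately, which is legitimate because each Loewner bound holds for every \(\beta\) and the limits pass through closed cones, gives \eqref{eq:matrix-bilip-Hessian} with constants \(c_\pm\).

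\textbf{Metric estimates.} The upper estimate \eqref{eq:matrix-bilip-upper-map} follows from \Cref{lem:hessian-bound-metric} applied with \(M=c_+G_+\), using \(\norm{z}_{(c_+G_+)^{-1}}=c_+^{-1/2}\norm z_{G_+^{-1}}\) and \(\norm z_{c_+G_+}=c_+^{1/2}\norm z_{G_+}\). For the lower estimate \eqref{eq:matrix-bilip-lower-map}, apply \Cref{lem:C11}\ref{lem:C11-lower} (or monotonicity of \(\nabla(\Phi-q_N)\)) with \(N=c_-^{-1}G_-^{-1}\). This gives \(\ip{T(x)-T(y)}{x-y}\ge c_-^{-1}\norm{x-y}_{G_-^{-1}}^2\). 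The dual Cauchy--Schwarz inequality \(\ip{w}{z}\le\norm w_{G_-}\norm z_{G_-^{-1}}\) then completes the bound.

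\textbf{Main obstacle.} The only delicate point is the index bookkeeping in the reverse problem: making sure the reverse gauge \(\omega_a(G_-^{1/2}\cdot)\) pairs with \(\Lip_{G_-^{-1}}(a)\), and \(b\) with \(\Lip_{G_-}(b)\), so that the arguments of \(\cC_{\rho_-}\) appear in the stated order.
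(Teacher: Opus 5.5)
Your proposal is correct and follows essentially the paper's route. The paper applies \Cref{cor:matrix-caffarelli} to the forward and reverse maps and then invokes \Cref{lem:forward-reverse-transfer} with \(M_+=c_+G_+\) and \(M_-=c_-G_-\); your scalar choice \(H_\pm=\beta_\pm^{-1}\Id\) inside \Cref{cor:spectral-bilip} does the same computation, except that you take the infimum over \(\beta_\pm\) after the transfer step rather than before, and your reverse bookkeeping \(\bigl(\Lip_{G_-}(b),\Lip_{G_-^{-1}}(a)\bigr)\) matches the stated \(c_-\).
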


\begin{corollary}[Scalar perturbative bi-Lipschitz estimate]
\label{cor:perturbative-bilip}
Let \(0<\lambda_V\le\Lambda_V\), \(0<\lambda_W\le\Lambda_W\), and assume that
\[
\lambda_V\Id\preceq D^2V\preceq\Lambda_V\Id,
\qquad
\lambda_W\Id\preceq D^2W\preceq\Lambda_W\Id.
\]
We set
\begin{align*}
C_{\rightarrow}
&:={\sqrt{\Lambda_V/\lambda_W}}\,
\cC\!\left(
\frac{\Lip(a)}{\sqrt{\Lambda_V}},
\frac{\Lip(b)}{\sqrt{\lambda_W}}
\right),\\
C_{\leftarrow}
&:={\sqrt{\Lambda_W/\lambda_V}}\,
\cC\!\left(
\frac{\Lip(b)}{\sqrt{\Lambda_W}},
\frac{\Lip(a)}{\sqrt{\lambda_V}}
\right).
\end{align*}
We then have
\begin{equation}\label{eq:perturbative-bilip-Hessian}
C_{\leftarrow}^{-1}\Id\preceq D^2\Phi\preceq C_{\rightarrow}\Id,
\end{equation}
and
\begin{equation}\label{eq:perturbative-bilip-map}
C_{\leftarrow}^{-1}\norm{x-y}
\le\norm{T(x)-T(y)}
\le C_{\rightarrow}\norm{x-y}
\qquad(x,y\in\R^d).
\end{equation}
\end{corollary}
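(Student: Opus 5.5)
This corollary is the scalar case of \Cref{cor:spectral-bilip}, obtained from the forward and reverse one-sided estimates through \Cref{lem:forward-reverse-transfer}. The plan is to specialise the forward data to \(Q_0=\Lambda_V\Id\), \(P_1=\lambda_W\Id\) and the reverse data to \(Q_1=\Lambda_W\Id\), \(P_0=\lambda_V\Id\). The curvature hypotheses give exactly the one-sided bounds each direction needs: \(D^2V\preceq\Lambda_V\Id\) and \(D^2W\succeq\lambda_W\Id\) for the forward problem, and \(D^2W\preceq\Lambda_W\Id\) and \(D^2V\succeq\lambda_V\Id\) for the reverse one.

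\emph{Forward bound.} Apply \Cref{cor:perturbative-caffarelli} to the pair \((\mu_a,\nu_b)\) with \(\Lambda=\Lambda_V\) and \(\kappa=\lambda_W\). This gives \(\Phi\in C^{1,1}(\R^d)\) and
\[
0\preceq D^2\Phi\preceq C_{\rightarrow}\Id .
\]

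\emph{Reverse bound.} The reverse transport from \(\nu_b\) to \(\mu_a\) has source potential \(W\) with perturbation \(b\), and target potential \(V\) with perturbation \(a\). The standing hypotheses of this section (finite continuous potentials, Lipschitz perturbations, probability measures with finite second moments) are symmetric in the two marginals, so \Cref{thm:spectral-caffarelli} and \Cref{cor:perturbative-caffarelli} apply to it verbatim with \(\Lambda=\Lambda_W\) and \(\kappa=\lambda_V\). The Brenier potential \(\Psi\) of \(U=\nabla\Psi\), taken as the globally continuous representative, therefore lies in \(C^{1,1}(\R^d)\) with
\[
0\preceq D^2\Psi\preceq C_{\leftarrow}\Id .
\]
The only check here is that the roles of \(\Lip(a)\) and \(\Lip(b)\) swap exactly as in the definition of \(C_{\leftarrow}\), namely \(\cC\bigl(\Lip(b)/\sqrt{\Lambda_W},\,\Lip(a)/\sqrt{\lambda_V}\bigr)\).

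\emph{Combination.} Feed \(M_+=C_{\rightarrow}\Id\) and \(M_-=C_{\leftarrow}\Id\) into \Cref{lem:forward-reverse-transfer}. Inequality \eqref{eq:transfer-Hessian} gives \eqref{eq:perturbative-bilip-Hessian} directly. For scalar matrices, \(\norm{z}_{c\Id}=\sqrt c\,\norm z\) and \(\norm{z}_{c^{-1}\Id}=c^{-1/2}\norm z\).
\begin{itemize}
\item Substituting into \eqref{eq:transfer-upper-map} gives \(C_{\rightarrow}^{-1/2}\norm{T(x)-T(y)}\le C_{\rightarrow}^{1/2}\norm{x-y}\), which is the upper inequality in \eqref{eq:perturbative-bilip-map}.
\item Substituting into \eqref{eq:transfer-lower-map} gives \(C_{\leftarrow}^{1/2}\norm{T(x)-T(y)}\ge C_{\leftarrow}^{-1/2}\norm{x-y}\), which is the lower inequality.
\end{itemize}

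There is no real obstacle. The only point needing care is that \Cref{lem:forward-reverse-transfer} requires globally continuous representatives with \(C^{1,1}\) potentials in both directions, and \Cref{thm:spectral-caffarelli} supplies these for each direction separately.
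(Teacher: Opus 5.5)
Your proposal is correct and matches the paper's argument: apply \Cref{cor:perturbative-caffarelli} to the forward transport (with \(\Lambda=\Lambda_V\), \(\kappa=\lambda_W\)) and to the reverse transport (with \(\Lambda=\Lambda_W\), \(\kappa=\lambda_V\), and the roles of \(a\) and \(b\) swapped). Then invoke \Cref{lem:forward-reverse-transfer} with \(M_+=C_{\rightarrow}\Id\) and \(M_-=C_{\leftarrow}\Id\); your scalar unpacking of the metric inequalities is also correct.
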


For a globally Lipschitz function \(a:\R^d\to\R\), let us write
\begin{equation}\label{eq:def-gamma-a}
Z_a:=\int_{\R^d}\e^{-a(x)}\dd\gamma_d(x),
\qquad
\dd\gamma_a(x)
:=Z_a^{-1}\e^{-a(x)}\dd\gamma_d(x).
\end{equation}

\begin{corollary}[Two log-Lipschitz Gaussian perturbations]
\label{cor:gaussian-bilip}
Let \(a,b:\R^d\to\R\) be globally Lipschitz, and denote by \(T_{a,b}=\nabla\Phi\) the Brenier map from \(\gamma_a\) to \(\gamma_b\). It follows that
\begin{equation}\label{eq:gaussian-bilip-Hessian}
\cC(\Lip(b),\Lip(a))^{-1}\Id
\preceq D^2\Phi
\preceq\cC(\Lip(a),\Lip(b))\Id,
\end{equation}
and
\begin{equation}\label{eq:gaussian-bilip-map}
\frac1{\cC(\Lip(b),\Lip(a))}\norm{x-y}
\le\norm{T_{a,b}(x)-T_{a,b}(y)}
\le\cC(\Lip(a),\Lip(b))\norm{x-y}
\qquad(x,y\in\R^d).
\end{equation}
\end{corollary}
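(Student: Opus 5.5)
This is a specialisation of \Cref{cor:perturbative-bilip}, which in turn comes from \Cref{cor:spectral-bilip} and \Cref{lem:forward-reverse-transfer}. We write \(\gamma_a\) in the form \eqref{eq:matrix-marginals-main} with \(V(x)=W(x)=\norm x^2/2\). The Gaussian normalising constant \((2\pi)^{-d/2}\) is absorbed into \(Z_{\mu,a}\) and \(Z_{\nu,b}\). Then \(V,W\) are finite, smooth, and satisfy \(\Id\preceq D^2V\preceq\Id\) and \(\Id\preceq D^2W\preceq\Id\). So the two-sided hypotheses hold with \(\lambda_V=\Lambda_V=\lambda_W=\Lambda_W=1\).

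We also need the standing assumptions of \Cref{sec:two-sided-gaussian}. Since \(\norm{a(x)-a(0)}\le\Lip(a)\norm x\), the density \(\e^{-a}\) is bounded above and below by \(\e^{\mp(\norm{a(0)}+\Lip(a)\norm x)}\). Against the Gaussian weight this gives \(0<Z_a<\infty\) and finite moments of every order, in particular finite second moments. The same argument applies to \(b\).

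Next we apply \Cref{cor:perturbative-bilip}. With all curvature constants equal to \(1\), the prefactors \(\sqrt{\Lambda_V/\lambda_W}\) and \(\sqrt{\Lambda_W/\lambda_V}\) equal \(1\), and the arguments reduce to
\[
C_{\rightarrow}=\cC(\Lip(a),\Lip(b)),\qquad C_{\leftarrow}=\cC(\Lip(b),\Lip(a)).
\]
Then \eqref{eq:perturbative-bilip-Hessian} and \eqref{eq:perturbative-bilip-map} become \eqref{eq:gaussian-bilip-Hessian} and \eqref{eq:gaussian-bilip-map}. If \(\Lip(a)=\Lip(b)=0\), both perturbations are constant, and the convention \(\cC(0,0)=1\) applies. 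In that case the map is the identity, which agrees with the statement.

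The step that needs care is the argument behind \Cref{cor:perturbative-bilip}, since it was stated without proof. It comes from \Cref{cor:spectral-bilip} with \(H_\pm=\beta_\pm^{-1}\Id\). The forward data are \((Q_0,P_1,a,b)=(\Id,\Id,a,b)\), which gives \(G_+=\mathsf R_+=\Id\). The reverse data are \((\Id,\Id,b,a)\), which gives \(G_-=\mathsf R_-=\Id\). As in the proof of \Cref{cor:matrix-caffarelli}, we bound the action using \(\omega_f(h)\le 2\Lip(f)\norm h\). Minimising over \(\beta_\pm\) then gives \(M_+\preceq\cC(\Lip(a),\Lip(b))\Id\) and \(M_-\preceq\cC(\Lip(b),\Lip(a))\Id\). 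Taking infima over admissible \(\beta_\pm\) in the Hessian bounds is valid because each bound holds for every \(\beta\).

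To use \Cref{lem:forward-reverse-transfer}, we need both potentials in \(C^{1,1}\) with upper Hessian bounds. These come from \Cref{thm:spectral-caffarelli}, applied to the forward transport and separately to the reverse transport \(\gamma_b\to\gamma_a\). The lemma then yields the lower Hessian bound and the lower Lipschitz bound, and this is the two-sided estimate claimed.
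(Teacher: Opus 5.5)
Your proposal is correct and follows the paper's own proof: take \(V=W=\norm{x}^2/2\) with all four curvature constants equal to one in \Cref{cor:perturbative-bilip}, which itself rests on the forward and reverse applications of \Cref{thm:spectral-caffarelli} with \(H=\beta^{-1}\Id\) combined with \Cref{lem:forward-reverse-transfer}. Your extra checks, namely finite second moments for Lipschitz tilts and passing to the infimum over \(\beta_\pm\) in the Loewner bounds, are details the paper leaves implicit.
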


\begin{proof}[Proof of \Cref{cor:spectral-bilip}]
The application of \Cref{thm:spectral-caffarelli} to the forward data gives
\(D^2\Phi\preceq M_+\). Let \(U=\nabla\Psi\) denote the reverse Brenier
representative. The same theorem, which we apply to the reverse data
\((Q_1,P_0,b,a)\), gives \(D^2\Psi\preceq M_-\). The conclusions now follow from
\Cref{lem:forward-reverse-transfer}.
\end{proof}

\begin{proof}[Proofs of \Cref{cor:matrix-bilip,cor:perturbative-bilip}]
For \Cref{cor:matrix-bilip}, we apply \Cref{cor:matrix-caffarelli} to the forward and reverse maps. We use \Cref{lem:forward-reverse-transfer} with \(M_+=c_+G_+\) and \(M_-=c_-G_-\). The argument for \Cref{cor:perturbative-bilip} is the same, where we use \Cref{cor:perturbative-caffarelli}, \(M_+=C_{\rightarrow}\Id\), and \(M_-=C_{\leftarrow}\Id\).
\end{proof}

\begin{proof}[Proof of \Cref{cor:gaussian-bilip}]
It suffices to take
\[
V(x)=W(x)=\frac12\norm{x}^2
\]
in \Cref{cor:perturbative-bilip}, with all four curvature constants equal to one.
\end{proof}

\subsection{Gaussian constants and displacement}
\label{sec:gaussian-applications}

\begin{proposition}[Optimisation of the Gaussian constants]
\label{prop:constant-optimization}
We fix \(\ell_a,\ell_b\ge0\) with \(\ell_a+\ell_b>0\). For \(\rho=1\), the infimum in
\eqref{eq:def-Crho} is attained at a unique point \(\beta_*\in(0,1)\). This point is characterised by
\begin{equation}\label{eq:beta-star-equation}
(1-\beta_*^2)^2
=
8\beta_*(\ell_b+\ell_a\beta_*)(\ell_a+\ell_b\beta_*).
\end{equation}
Moreover, for \(L>0\),
\begin{align}
\cC(0,L)
&=
\frac{2L+\sqrt{4L^2+2}}{\sqrt2}
\exp\!\left(2L^2+L\sqrt{4L^2+2}\right),
\label{eq:Cplus-closed}\\
\cC(L,0)
&=
\frac{2L+\sqrt{4L^2+2}}{\sqrt2}
\exp\!\left(L\sqrt{4L^2+2}-2L^2\right).
\label{eq:Cminus-closed}
\end{align}
We further have
\begin{equation}\label{eq:C-asymptotics}
\cC(0,L)\sim2\sqrt{2\e}\,L\e^{4L^2},
\qquad
\cC(L,0)\sim2\sqrt{2\e}\,L
\qquad(L\to\infty).
\end{equation}
\end{proposition}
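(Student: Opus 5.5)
The plan is to reduce everything to one-variable calculus on \((0,1)\). With \(\rho=1\), taking logarithms in \eqref{eq:def-Crho} means minimising
\[
f(\beta):=-\log\beta+\frac{4(\ell_b+\beta\ell_a)^2}{1-\beta^2},\qquad 0<\beta<1 .
\]
We have \(f(\beta)\to+\infty\) as \(\beta\downarrow0\). Since \(\ell_a+\ell_b>0\), the numerator \(4(\ell_b+\beta\ell_a)^2\) stays bounded away from zero near \(\beta=1\), so \(f(\beta)\to+\infty\) as \(\beta\uparrow1\) as well. Hence a minimiser exists in \((0,1)\).

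\emph{Critical point equation.} Differentiating the quotient, the numerator of the derivative of the second term is
\[
8\ell_a(\ell_b+\beta\ell_a)(1-\beta^2)+8\beta(\ell_b+\beta\ell_a)^2
=8(\ell_b+\beta\ell_a)(\ell_a+\beta\ell_b).
\]
This gives
\[
f'(\beta)=-\frac{g(\beta)}{\beta(1-\beta^2)^2},
\qquad
g(\beta):=(1-\beta^2)^2-8\beta(\ell_b+\ell_a\beta)(\ell_a+\ell_b\beta).
\]
On \([0,1]\), the term \((1-\beta^2)^2\) is strictly decreasing. The subtracted term is a polynomial with nonnegative coefficients that is not identically zero, so it is strictly increasing. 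Therefore \(g\) is strictly decreasing, with \(g(0)=1>0\) and \(g(1)=-8(\ell_a+\ell_b)^2<0\). So \(g\) has a unique root \(\beta_*\in(0,1)\). Moreover \(f'<0\) on \((0,\beta_*)\) and \(f'>0\) on \((\beta_*,1)\), so \(\beta_*\) is the unique minimiser, and it is characterised by \eqref{eq:beta-star-equation}.

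\emph{Closed forms.} In both special cases \((\ell_a,\ell_b)=(0,L)\) and \((L,0)\), equation \eqref{eq:beta-star-equation} becomes \((1-\beta^2)^2=8\beta^2L^2\). Since \(1-\beta^2>0\), this is \(1-\beta^2=2\sqrt2\,L\beta\), whose positive root is
\[
\beta_*=\sqrt{2L^2+1}-\sqrt2L,
\qquad
\beta_*^{-1}=\sqrt2L+\sqrt{2L^2+1}=\frac{2L+\sqrt{4L^2+2}}{\sqrt2}.
\]
For \(\cC(0,L)\), substitute \(1-\beta_*^2=2\sqrt2L\beta_*\) into the exponent:
\[
\frac{4L^2}{1-\beta_*^2}=\frac{\sqrt2L}{\beta_*}=2L^2+L\sqrt{4L^2+2}.
\]
For \(\cC(L,0)\), the same substitution gives
\[
\frac{4\beta_*^2L^2}{1-\beta_*^2}=\sqrt2L\beta_*=L\sqrt{4L^2+2}-2L^2.
\]
Together with the prefactor \(\beta_*^{-1}\), these yield \eqref{eq:Cplus-closed} and \eqref{eq:Cminus-closed}.

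\emph{Asymptotics.} We expand
\[
L\sqrt{4L^2+2}=2L^2\sqrt{1+(2L^2)^{-1}}=2L^2+\tfrac12+\bigO(L^{-2}),
\]
and note that the prefactor satisfies \(\beta_*^{-1}\sim2\sqrt2L\). This gives \(\cC(0,L)\sim2\sqrt2L\,\e^{4L^2+1/2}\) and \(\cC(L,0)\sim2\sqrt2L\,\e^{1/2}\), which is \eqref{eq:C-asymptotics}.

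No step is a real obstacle. The only points needing care are the factorisation of the derivative numerator and the monotonicity argument for uniqueness, which requires \(\ell_a+\ell_b>0\) so that the subtracted term in \(g\) is strictly increasing.
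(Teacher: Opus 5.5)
Your proposal is correct and takes essentially the same route as the paper. Both arguments use the logarithm of the objective, the factorised derivative, strict monotonicity of $8\beta(\ell_b+\ell_a\beta)(\ell_a+\ell_b\beta)/(1-\beta^2)^2$ (equivalently the sign change of your $g$) for existence and uniqueness, the explicit root $\beta_*=\sqrt{2L^2+1}-\sqrt2L$, and a direct expansion for the asymptotics. The only difference is cosmetic: the paper treats $\cC(0,L)$ through the substitution $\lambda=(1-\beta^2)/2$, whereas you note that both special cases reduce to the same equation $1-\beta^2=2\sqrt2L\beta$, which handles them together.
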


\begin{proof}
Let us assume that \(\ell_a+\ell_b>0\), and set
\[
F(\beta)
:=
-\log\beta
+
\frac{4(\ell_b+\ell_a\beta)^2}{1-\beta^2},
\qquad0<\beta<1.
\]
Observe that the function satisfies \(F(\beta)\to+\infty\) at both endpoints, while
\[
F'(\beta)
=
-\frac1\beta
+
\frac{8(\ell_b+\ell_a\beta)(\ell_a+\ell_b\beta)}
{(1-\beta^2)^2}.
\]
Notice that the function
\[
\beta\longmapsto
\frac{8\beta(\ell_b+\ell_a\beta)(\ell_a+\ell_b\beta)}
{(1-\beta^2)^2}
\]
is strictly increasing from \(0\) to \(+\infty\), which implies that the function \(F\) has
a unique critical point. This point is its unique minimiser, and it is characterised by \eqref{eq:beta-star-equation}.

Let us first take \(\ell_a=0\) and \(\ell_b=L\), and write \(\lambda=(1-\beta^2)/2\). The minimisation problem becomes
\[
\inf_{0<\lambda<1/2}
\frac{\exp(2L^2/\lambda)}{\sqrt{1-2\lambda}}.
\]
The logarithmic derivative vanishes at
\[
\lambda_*
=
L\bigl(\sqrt{4L^2+2}-2L\bigr),
\]
and substitution gives \eqref{eq:Cplus-closed}.

We next take \(\ell_a=L\) and \(\ell_b=0\). The equation for the minimiser now
reads
\[
1-\beta^2=2\sqrt2L\beta,
\]
so
\[
\beta_*=\sqrt{2L^2+1}-\sqrt2L.
\]
Inserting this value gives \eqref{eq:Cminus-closed}, and direct expansion
yields the two asymptotic identities.
\end{proof}

\begin{proof}[Proof of \Cref{thm:gaussian-source}]
It suffices to \Cref{cor:gaussian-bilip} with \(a=0\) and \(b=B\). The conclusion then
follows directly from \Cref{prop:constant-optimization}.
\end{proof}

\begin{remark}[An obstruction]
\label{rem:neeman-obstruction}
We fix \(L>0\), and set
\[
B^{(L)}(t):=-L|t|,
\qquad
Z_L:=\int_{\R}\e^{L|t|}\dd\gamma_1(t),
\qquad
\dd\nu_L(t):=Z_L^{-1}\e^{L|t|}\dd\gamma_1(t),
\]
and let \(T_L\) denote the monotone transport from \(\gamma_1\) to \(\nu_L\). We write
\[
\varphi_1(t):=(2\pi)^{-1/2}\e^{-t^2/2},
\]
for the standard Gaussian density. By symmetry, \(T_L(0)=0\). We differentiate the identity between the source and target distribution functions at the origin, and obtain
\[
T_L'(0)
=
\frac{\varphi_1(0)}{Z_L^{-1}\varphi_1(0)}
=Z_L.
\]
By direct computation, we have
\[
Z_L
=
2\int_0^\infty \e^{Lt}\varphi_1(t)\dd t
=
2\e^{L^2/2}\int_{-L}^\infty\varphi_1(s)\dd s
\ge \e^{L^2/2},
\]
and conclude that
\[
\Lip(T_L)\ge T_L'(0)\ge\e^{L^2/2}.
\]
This is the normalising-constant obstruction from \cite[Remark 1]{FMS}. In that remark, the formula \(W(x)=L|x|+\log Z\) is inconsistent with the identity \(Z=\int \e^{L|x|}\,\dd\mu\), and the calculation uses \(W(x)=-L|x|+\log Z\). In our convention
\[
\dd\nu_B=Z_B^{-1}\e^{-B}\dd\gamma_1,
\]
the heavy-tailed perturbation is \(B^{(L)}(t)=-L|t|\).
\end{remark}

\begin{theorem}[Displacement for Gaussian perturbations]
\label{thm:gaussian-displacement}
Let \(a,b:\R^d\to\R\) be globally Lipschitz, and let \(T_{a,b}\) be the globally continuous Brenier representative from \(\gamma_a\) to \(\gamma_b\). It follows that
\begin{equation}\label{eq:gaussian-displacement-body}
T_{a,b}(x)-x\in\cK_a-\cK_b
\qquad(x\in\R^d).
\end{equation}
In particular,
\begin{equation}\label{eq:gaussian-displacement-norm}
\sup_{x\in\R^d}\norm{T_{a,b}(x)-x}
\le
\sup_{p\in\cK_a,\,q\in\cK_b}\norm{p-q}
\le\Lip(a)+\Lip(b).
\end{equation}
Notice that the inclusion is exact for affine tilts. More precisely, let \(p,q\in\R^d\), \(c_a,c_b\in\R\), \(a(x)=\ip{p}{x}+c_a\), and \(b(y)=\ip{q}{y}+c_b\). One then has \(T_{a,b}(x)=x+p-q\) for every \(x\in\R^d\).
\end{theorem}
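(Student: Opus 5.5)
The plan is to prove the inclusion first for smooth data with a maximum principle, and then pass to general Lipschitz \(a,b\) by approximation. The key observation is that \(\cK_a-\cK_b\) is compact and convex, so \(T(x)-x\in\cK_a-\cK_b\) is equivalent to
\[
\ip{T(x)-x}{u}\le h_{\cK_a}(u)-h_{\cK_b}(-u)=h_{\cK_a}(u)+h_{-\cK_b}(u)
\]
for every unit vector \(u\). We therefore fix \(u\in\mathbb S^{d-1}\) and bound the scalar function \(g_u(x):=\ip{T(x)-x}{u}\) from above.

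\emph{Smooth case.} Assume first that \(a,b\) are smooth and globally Lipschitz. By \Cref{prop:smooth-diffeo-general}, \(T\) is a smooth diffeomorphism, and by \Cref{cor:gaussian-bilip} it is globally Lipschitz. Differentiating the Monge--Amp\`ere equation \eqref{eq:MA-general} with \(\mathcal V=\tfrac12\norm x^2+a\) and \(\mathcal W=\tfrac12\norm y^2+b\) in the direction \(u\) gives
\[
\cL\ip{T}{u}=-\ip{x+\nabla a(x)}{u}.
\]
Using \(\cL\ip{x}{u}=-\ip{T(x)+\nabla b(T(x))}{u}\), we obtain
\[
\cL g_u=\ip{T(x)-x}{u}-\ip{\nabla a(x)}{u}+\ip{\nabla b(T(x))}{u}.
\]
At an interior maximum of \(g_u\) we have \(\cL g_u\le0\), so
\[
g_u\le\ip{\nabla a(x)}{u}-\ip{\nabla b(T(x))}{u}\le h_{\cK_a}(u)+h_{\cK_b}(-u),
\]
which is exactly the support-function bound.

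\emph{Main obstacle.} The maximum need not be attained, since \(g_u\) is only of linear growth. To handle this, I would maximise \(g_u-\varepsilon\Phi\), or alternatively \(g_u-\varepsilon\norm x^2\). By the supercoercivity of \Cref{lem:supercoercive} together with the linear growth of \(g_u\) (which follows from the Lipschitz bound on \(T\)), a maximiser \(x_\varepsilon\) exists. At this point \(\cL g_u\le\varepsilon\cL\Phi\le\varepsilon(d+L_b^2/4)\), where \eqref{eq:LPhi-general-bound} is applied after translating the target, or directly in the Gaussian case, where the translation is unnecessary. Letting \(\varepsilon\to0\), and noting that \(g_u(x)-\varepsilon\Phi(x)\le g_u(x_\varepsilon)-\varepsilon\Phi(x_\varepsilon)\le\) the bound \(+\,\bigO(\varepsilon)\), we get \(g_u(x)\le h_{\cK_a}(u)+h_{\cK_b}(-u)\) for every fixed \(x\). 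The only care required is that \(\varepsilon\Phi(x)\to0\) for fixed \(x\), which is immediate.

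\emph{General case.} For general Lipschitz \(a,b\), take the mollifications \(a_n,b_n\). By \Cref{lem:width-Bregman}(iv), \(\cK_{a_n}\subseteq\cK_a\) and \(\cK_{b_n}\subseteq\cK_b\). The Lipschitz constants of the \(T_n\) are uniform by \Cref{cor:gaussian-bilip}, and the marginals converge in \(W_2\). Hence \Cref{lem:uniform-Lipschitz-stability} gives \(T_n\to T\) locally uniformly, and the closed inclusion passes to the limit.

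\emph{Remaining claims.} The norm bound follows from \(\cK_a\subseteq\overline B_{\Lip(a)}(0)\) and \(\cK_b\subseteq\overline B_{\Lip(b)}(0)\). For affine tilts, \(\gamma_a=\mathcal N(-p,\Id)\) and \(\gamma_b=\mathcal N(-q,\Id)\). The translation \(x\mapsto x+p-q\) is the gradient of a convex function and pushes the first measure to the second, so it is the Brenier map by uniqueness. This shows the inclusion is attained.
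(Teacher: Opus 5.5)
Your proposal is correct and follows the paper's proof essentially step by step. The shared steps are: the identity \(\cL g_u=g_u-\ip{\nabla a}{u}+\ip{\nabla b\circ T}{u}\) from differentiating the Monge--Amp\`ere equation; penalisation by \(\varepsilon\Phi\), using supercoercivity and \(\cL\Phi\le d+\Lip(b)^2/4\); mollification with \(\cK_{a_n}\subseteq\cK_a\), \(\cK_{b_n}\subseteq\cK_b\) plus the stability lemma; and translation of Gaussians for affine tilts. The one hypothesis you leave implicit is \(\sup_n\norm{T_n(0)}<\infty\) in \Cref{lem:uniform-Lipschitz-stability}, and your smooth-case bound supplies it directly, since \(T_n(0)\in\cK_a-\cK_b\) gives \(\norm{T_n(0)}\le\Lip(a)+\Lip(b)\).
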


\begin{proof}
We begin with smooth \(a\) and \(b\), write \(T_{a,b}=\nabla\Phi\), and set
\[
A:=D^2\Phi,
\qquad
S:=T_{a,b}-\Id.
\]
For \(f\in C^2(\R^d)\), we define
\begin{equation}\label{eq:displacement-operator-two}
(\cL f)(x)
:=\tr\!\left(A(x)^{-1}D^2f(x)\right)
-\ip{T_{a,b}(x)+\nabla b(T_{a,b}(x))}{\nabla f(x)}.
\end{equation}
Using this notation, the Monge--Amp\`ere equation is
\[
\log\det A
=c-\frac12\norm{x}^2-a(x)
+\frac12\norm{T_{a,b}(x)}^2+b(T_{a,b}(x)),
\qquad c\in\R.
\]
We fix \(e\in\mathbb S^{d-1}\), and write \(a_e:=\ip{\nabla a}{e}\), \(b_e:=\ip{\nabla b}{e}\), and \(S_e:=\ip{S}{e}\). Differentiating once gives
\begin{equation}\label{eq:displacement-linear-equation-two}
\cL S_e
=S_e+b_e(T_{a,b}(x))-a_e(x).
\end{equation}
The same equation also gives
\begin{equation}\label{eq:LPhi-displacement-two}
\cL\Phi
=d-\ip{T_{a,b}(x)+\nabla b(T_{a,b}(x))}{T_{a,b}(x)}
\le d+\frac{\Lip(b)^2}{4}.
\end{equation}

By \Cref{prop:smooth-diffeo-general,lem:supercoercive}, the potential is supercoercive. After adding a constant, we assume that \(\Phi\ge0\). The estimate in \Cref{cor:gaussian-bilip} shows that \(S\) has at most linear growth. For every \(\varepsilon>0\), the function \(S_e-\varepsilon\Phi\) attains a global maximum at some
\(x_\varepsilon\). At this point,
\[
\nabla(S_e-\varepsilon\Phi)(x_\varepsilon)=0,
\qquad
D^2(S_e-\varepsilon\Phi)(x_\varepsilon)\preceq0,
\]
so
\[
\cL(S_e-\varepsilon\Phi)(x_\varepsilon)\le0.
\]
We combine \eqref{eq:displacement-linear-equation-two} and \eqref{eq:LPhi-displacement-two}, and obtain
\[
S_e(x_\varepsilon)
\le a_e(x_\varepsilon)-b_e(T_{a,b}(x_\varepsilon))
+\varepsilon\left(d+\frac{\Lip(b)^2}{4}\right).
\]
We now fix \(x_0\). By maximality and the inequality \(\Phi\ge0\),
\[
S_e(x_0)-\varepsilon\Phi(x_0)
\le S_e(x_\varepsilon)-\varepsilon\Phi(x_\varepsilon)
\le S_e(x_\varepsilon).
\]
We now let \(\varepsilon\downarrow0\) and obtain
\begin{equation}\label{eq:displacement-support-bound-two}
\ip{S(x_0)}{e}
\le
\sup_{z\in\R^d}a_e(z)-\inf_{z\in\R^d}b_e(z)
=h_{\cK_a-\cK_b}(e).
\end{equation}
As the estimate holds for every unit vector \(e\), the support-function characterisation of closed convex sets yields \eqref{eq:gaussian-displacement-body}.

For general Lipschitz \(a,b\), we choose a standard mollifier family
\((\zeta_\varepsilon)_{\varepsilon>0}\), and set \(a_n=a*\zeta_{1/n}\) and
\(b_n=b*\zeta_{1/n}\). By \Cref{lem:width-Bregman}, \(\cK_{a_n}\subseteq\cK_a\) and \(\cK_{b_n}\subseteq\cK_b\). The Gaussian specialisation of the weighted-density and stability argument from \Cref{sec:general-proof}, with \Cref{cor:gaussian-bilip}, yields
\[
T_{a_n,b_n}\longrightarrow T_{a,b}
\qquad\text{locally uniformly}.
\]
We pass to the limit in the closed inclusion
\[
T_{a_n,b_n}(x)-x\in\cK_a-\cK_b
\]
and obtain \eqref{eq:gaussian-displacement-body}. The norm estimate follows
from \(\cK_f\subseteq\overline B_{\Lip(f)}(0)\) for \(f\in\{a,b\}\). Finally, let \(a(x)=\ip{p}{x}+c_a\) and \(b(y)=\ip{q}{y}+c_b\). By completing the square, we obtain \(T_{a,b}(x)=x+p-q\).
\end{proof}

\section{Compact Gaussian mixtures}\label{sec:mixtures}

\begin{theorem}[Compact Gaussian mixtures]\label{thm:mixtures}
Let \(\eta\) be a compactly supported probability measure on \(\R^d\), and set
\[
K:=\supp\eta,
\qquad
R:=\diam K,
\qquad
m_\eta:=\int z\,\eta(\dd z),
\qquad
\rho_\eta:=\sup_{z\in K}\norm{z-m_\eta}.
\]
We also set
\begin{equation}\label{eq:def-Meta}
\cM(\eta)
:=\min\left\{
\cC(0,\rho_\eta),
1+(1+4R^2)\e^{4R^2}
\right\}.
\end{equation}
Let \(T=\nabla\Phi\) be the Brenier map from \(\gamma_d\) to \(\gamma_d*\eta\). It follows that \(T\) is a smooth diffeomorphism and
\[
T(x)-x\in\conv K
\qquad(x\in\R^d).
\]
When \(R=0\), the map \(T\) is a translation. If \(R>0\), we obtain
\begin{equation}\label{eq:mixture-Hessian-intro}
\Id\preceq D^2\Phi(x)\preceq\cM(\eta)\Id
\qquad(x\in\R^d).
\end{equation}
The anisotropic estimate in terms of the directional widths of \(K\) is given in \Cref{cor:mixture-spectral-bound}.
\end{theorem}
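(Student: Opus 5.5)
The plan is to write the mixture as a log-Lipschitz perturbation of $\gamma_d$ and then use \Cref{thm:gaussian-source}, \Cref{thm:gaussian-displacement} and \Cref{lem:forward-reverse-transfer}. The first term of the minimum follows directly from this; the second term is the part I have not worked out.

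\textbf{Step 1: representation of the target.} Translating $\eta$ by $-m_\eta$ translates the target and adds the constant $m_\eta$ to $T$. This leaves $D^2\Phi$ unchanged and shifts $\conv K$ consistently, so we may assume $m_\eta=0$. Then
\[
\gamma_d*\eta=Z_B^{-1}\e^{-B}\gamma_d,
\qquad
B(y):=-\log\int \e^{\ip{y}{z}-\norm z^2/2}\,\eta(\dd z).
\]
Differentiation under the integral sign is justified because $K$ is compact. It gives
\[
\nabla B(y)=-\mathbb E_y[z],
\qquad
D^2B(y)=-\Cov_y(z),
\]
where $\mathbb E_y,\Cov_y$ refer to the tilted probability measure $\eta_y(\dd z)\propto \e^{\ip{y}{z}}\e^{-\norm z^2/2}\eta(\dd z)$ supported in $K$. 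Three consequences follow:
\begin{itemize}
\item $B\in C^\infty$.
\item $\nabla B(y)\in-\conv K$, so $\cK_B\subseteq-\conv K$.
\item $\Lip(B)\le\sup_{z\in K}\norm z=\rho_\eta$.
\end{itemize}
\Cref{prop:smooth-diffeo-general} with $V=W=\norm\cdot^2/2$, $a=0$, $b=B$ then makes $T$ a smooth diffeomorphism. \Cref{thm:gaussian-displacement} with $a=0$ (so $\cK_a=\{0\}$) and $b=B$ gives $T(x)-x\in-\cK_B\subseteq\conv K$. If $R=0$, then $\eta$ is a Dirac mass, $B$ is affine, and the affine case of \Cref{thm:gaussian-displacement} gives a translation.

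\textbf{Step 2: the lower bound $\Id\preceq D^2\Phi$.} The target potential $W_t:=\norm\cdot^2/2+B$ satisfies $D^2W_t=\Id-\Cov_y\preceq\Id$. Consider the reverse transport from $\e^{-W_t}$ to $\gamma_d$ and apply \Cref{thm:spectral-caffarelli} with $Q=P=\Id$ and zero perturbations; equivalently, use \Cref{cor:affine-sharpness}. The reverse potential $\Psi$ then satisfies $D^2\Psi\preceq\Id$. \Cref{lem:forward-reverse-transfer}, with $M_-=\Id$ and $M_+$ given by any forward bound, then yields $D^2\Phi\succeq\Id$.

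\textbf{Step 3: the upper bound $\cC(0,\rho_\eta)$.} Since $\Lip(B)\le\rho_\eta$ after centring, \Cref{thm:gaussian-source} gives $D^2\Phi\preceq\cC(0,\rho_\eta)\Id$. Because $T$ is smooth, this holds pointwise.

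\textbf{Step 4: the bound $1+(1+4R^2)\e^{4R^2}$ (main obstacle).} This term depends on $R$ rather than $\rho_\eta$, and I expect it to be the hardest part. I do not yet have a confirmed route; the following is what I would try first. The diameter enters through
\[
\omega_B(h)=h_{K-K}(h)\le R\norm h
\qquad\text{and}\qquad
0\preceq\Cov_y\preceq R^2\Id .
\]
The idea is to rerun the maximum-principle argument of \Cref{prop:smooth-spectral-bound} on $\Phi-\norm\cdot^2/2$, that is, with the quadratic baseline $\Gamma=\Id$ of \Cref{lem:quadratic-baseline-calculus}. The target's exact curvature $\Id-\Cov_y$ would be used as the two-sided Bregman control in \Cref{lem:translated-corrector-general}, and the gradient width $R$ would replace $2\Lip(B)$. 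The form "$1+$" suggests that the penalty should bound $D^2\Phi-\Id$, and that the factor $(1+4R^2)\e^{4R^2}$ arises from optimising the penalty with width constant $R$ in place of $2L$. This optimisation would be analogous to \Cref{prop:constant-optimization} but with the extra polynomial factor.

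If this direct route fails, I would fall back on the anisotropic \Cref{cor:mixture-spectral-bound}, cited in the statement, with scalar metrics, and check that its optimised constant is at most $1+(1+4R^2)\e^{4R^2}$. Taking the minimum of Steps 3 and 4 then gives $\cM(\eta)$.
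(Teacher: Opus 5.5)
Steps 1--3 are correct and follow the paper's own argument. The displacement inclusion comes from \Cref{thm:gaussian-displacement} with $\cK_B\subseteq-\conv K$. The lower bound comes from the unperturbed Caffarelli comparison for the reverse map: you pass through \Cref{lem:forward-reverse-transfer}, while the paper uses $D\widehat T(T(x))=D^2\Phi(x)^{-1}$ directly, which is the same content. The $\cC(0,\rho_\eta)$ term comes from \Cref{thm:gaussian-source} with $\Lip(B)\le\rho_\eta$.

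The gap is Step 4, which you leave unproved. Your first route is only a heuristic, and it misses the ingredient that drives the paper's direct argument: the compact displacement range $S(\R^d)\subseteq\conv K$ for $S=T-\Id$. The paper sets $h=\Phi-\frac12\norm{\cdot}^2$ and $F_m(x)=h(x+m)-h(x)-\ip{m}{S(x)}$, and uses the displacement range in two ways:
\begin{itemize}
\item It gives $0\le F_m\le R\norm m$. This is what lets the penalised functional attain a maximum against a penalty $\Theta_R$ that grows only like $\frac12\norm m^2$.
\item It forces the stationary increment $q=\norm{S(x+m)-S(x)}$ to satisfy $q\le R$.
\end{itemize}
On that range, the contradiction comes from three pieces together: the profile $p_R=\mathscr Q^{-1}$ with $\mathscr Q(q)=q\e^{4Rq}/\Lambda_R$, the one-sided bound $D_{U_\eta}(y+v,y)\ge\frac12\norm v^2-R\norm v$, and Schur coercivity with $M=\Id+D^2\Theta_R(m)$. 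The constant $\Lambda_R=p_R'(0)$ is fixed by the choice $\lambda_R=4R$ and the $C^1$ matching $\mathscr Q'(R)=1$. It does not come from an optimisation as in \Cref{prop:constant-optimization}.

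Your fallback, however, closes the gap immediately, and you should simply carry it out. In \Cref{cor:mixture-spectral-bound} take $\Sigma=\Id$. Since $w_K(u)\le R$ for $u\in\mathbb S^{d-1}$,
\[
\cA_K(\Id,\beta)\le\int_0^\infty\Bigl(R-\frac t2(1-\beta^2)\Bigr)_+\dd t=\frac{R^2}{1-\beta^2}.
\]
So $D^2\Phi\preceq\beta^{-1}\e^{R^2/(1-\beta^2)}\Id$ for every $\beta\in(0,1)$, that is, $D^2\Phi\preceq\cC(0,R/2)\Id$. Two comparisons then finish the job:
\begin{itemize}
\item Choosing $\beta=2^{-1/2}$ gives $\cC(0,R/2)\le\sqrt2\,\e^{2R^2}<1+\e^{4R^2}\le1+(1+4R^2)\e^{4R^2}$.
\item Since $R\le2\rho_\eta$ and $\cC(0,\cdot)$ is nondecreasing, $\cC(0,R/2)\le\cC(0,\rho_\eta)$.
\end{itemize}
Hence $D^2\Phi\preceq\cC(0,R/2)\Id\preceq\cM(\eta)\Id$, pointwise because $\Phi$ is smooth. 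With this step written out, your proof is complete.

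Compared with the paper, your route replaces the second, mixture-specific maximum-principle argument by a direct specialisation of the width-based theorem. It is shorter, gives a strictly smaller constant than $1+(1+4R^2)\e^{4R^2}$, and makes Step 3 redundant. The paper's dedicated argument shows the $\Gamma=\Id$ baseline driven by the displacement range rather than by the general theorem, but as built it yields the weaker constant.
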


We fix a compactly supported Borel probability measure \(\eta\) on \(\R^d\). Let
\(T=\nabla\Phi\) denote the Brenier map from \(\gamma_d\) to \(\nu:=\gamma_d*\eta\). The anisotropic support-width estimate follows from \Cref{cor:mixture-spectral-bound}, and the \(\cC(0,\rho_\eta)\) bound in \eqref{eq:def-Meta} follows directly from the Gaussian perturbation result.

\begin{lemma}[Centring]\label{lem:mixture-centering}
Let
\[
\bar\eta:=(z\mapsto z-m_\eta)_\#\eta,
\qquad
\bar K:=K-m_\eta.
\]
It follows that \(\bar\eta\) is centred, and
\[
\bar K\subseteq\overline B_{\rho_\eta}(0),
\qquad
\rho_\eta\le R,
\qquad
\diam\bar K=R.
\]
We set \(\bar T:=T-m_\eta\), so that \(\bar T\) is the Brenier map from
\(\gamma_d\) to \(\gamma_d*\bar\eta\), and \(D\bar T=DT\).
\end{lemma}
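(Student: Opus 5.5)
The statement has four parts, and each follows from an elementary computation with the push-forward, so we verify them in turn.

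\textbf{Centring.} By definition, \(\int z\,\bar\eta(\dd z)=\int (z-m_\eta)\,\eta(\dd z)=m_\eta-m_\eta=0\). The support of a push-forward under a homeomorphism is the image of the support, so \(\supp\bar\eta=K-m_\eta=\bar K\).

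\textbf{Geometric claims.} The inclusion \(\bar K\subseteq\overline B_{\rho_\eta}(0)\) is the definition of \(\rho_\eta\) rewritten. Translation preserves distances, so \(\diam\bar K=\diam K=R\). For \(\rho_\eta\le R\), the key step is that \(m_\eta\in\conv K\), since \(\eta\) is a probability measure supported on the compact set \(K\). For \(z\in K\), the function \(w\mapsto\norm{z-w}\) is convex, so its supremum over \(\conv K\) equals its supremum over \(K\). Hence
\[
\norm{z-m_\eta}\le\sup_{w\in K}\norm{z-w}\le R,
\]
and taking the supremum over \(z\in K\) gives \(\rho_\eta\le R\).

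\textbf{Transport claim.} Write \(\tau(y):=y-m_\eta\). Convolution commutes with translation, so \(\gamma_d*\bar\eta=\tau_\#(\gamma_d*\eta)\). Then \(\bar T=\tau\circ T\) pushes \(\gamma_d\) forward to \(\tau_\#\nu=\gamma_d*\bar\eta\). Moreover \(\bar T=\nabla\bigl(\Phi-\ip{m_\eta}{\cdot}\bigr)\), and \(\Phi-\ip{m_\eta}{\cdot}\) is still convex. Both marginals have finite second moments because \(\eta\) is compactly supported. Brenier--McCann uniqueness \cite{Brenier,McCann} therefore identifies \(\bar T\) with the Brenier map from \(\gamma_d\) to \(\gamma_d*\bar\eta\).

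\textbf{Derivative claim.} Since \(\bar T\) and \(T\) differ by a constant vector, \(D\bar T=DT\), in the sense of distributions as well as pointwise wherever \(T\) is differentiable.

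None of these steps poses a real obstacle. The only point that needs care is \(\rho_\eta\le R\), which rests on \(m_\eta\in\conv K\) together with convexity of the distance function.
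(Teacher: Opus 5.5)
Your proof is correct and follows the paper's argument: the transport part (translation pushforward, the convex potential \(\Phi-\ip{m_\eta}{\cdot}\), Brenier--McCann uniqueness) is exactly what the paper does, and the centring and diameter claims are immediate in both. The only cosmetic difference is that the paper gets \(\rho_\eta\le R\) in one line from \(\norm{z-m_\eta}\le\int_K\norm{z-z'}\,\eta(\dd z')\le R\), which avoids your detour through \(m_\eta\in\conv K\); both arguments rest on convexity of the norm.
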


\begin{proof}
For \(z\in K\),
\[
\norm{z-m_\eta}
=\norm{\int_K(z-z')\eta(\dd z')}
\le\int_K\norm{z-z'}\eta(\dd z')
\le R,
\]
which proves \(\rho_\eta\le R\). The measure \(\gamma_d*\bar\eta\) is the pushforward of \(\gamma_d*\eta\) under the translation \(y\mapsto y-m_\eta\). Also, the map \(T-m_\eta\) is the gradient of the convex function \(\Phi(x)-\ip{m_\eta}{x}\), and it is the Brenier map to the translated target.
\end{proof}

By \Cref{lem:mixture-centering}, it suffices to establish all derivative estimates under the normalisation
\begin{equation}\label{eq:centered-mixture}
\int z\,\eta(\dd z)=0,
\qquad
K\subseteq\overline B_{\rho_\eta}(0).
\end{equation}

Let \(\varphi_d\) denote the standard Gaussian density. The density of \(\nu=\gamma_d*\eta\) is given by
\begin{align}
q_\eta(y)
&=\int_K\varphi_d(y-z)\eta(\dd z)\notag\\
&=\varphi_d(y)
\int_K\exp\!\left(\ip{y}{z}-\frac12\norm{z}^2\right)\eta(\dd z).
\label{eq:mixture-density}
\end{align}
We define
\begin{equation}\label{eq:psi-mixture}
\psi(y)
:=\log\int_K
\exp\!\left(\ip{y}{z}-\frac12\norm{z}^2\right)\eta(\dd z).
\end{equation}
We then have
\begin{equation}\label{eq:mixture-relative-density}
q_\eta(y)=\varphi_d(y)\e^{\psi(y)}.
\end{equation}
Up to an additive constant, the Lebesgue potential of \(\nu\) is
\begin{equation}\label{eq:Ueta-mixture}
U_\eta(y):=\frac12\norm{y}^2-\psi(y).
\end{equation}
For \(y\in\R^d\), we introduce the tilted mixing law
\begin{equation}\label{eq:tilted-mixture}
\eta_y(\dd z)
:=\frac{\exp(\ip{y}{z}-\norm{z}^2/2)}
{\int_K\exp(\ip{y}{z'}-\norm{z'}^2/2)\eta(\dd z')}
\eta(\dd z).
\end{equation}
Let \(Z\) be the identity map on \(K\), which one may understand as a random vector under \(\eta_y\). Differentiation under the integral gives
\begin{equation}\label{eq:psi-derivatives-mixture}
\nabla\psi(y)=\mathbb E_{\eta_y}Z,
\qquad
D^2\psi(y)=\Cov_{\eta_y}(Z).
\end{equation}
These are the Gaussian conditional-mean and conditional-covariance
identities. For this interpretation, see \cite{Miyasawa} and \cite[eqs. (3) and (17)]{DytsoPoorShamai}. For the unit-noise divergence and conditional-variance trace identity, consult \cite[Property 3 and eqs. (13)--(14)]{HatsellNolte}. The identities
imply
\begin{equation}\label{eq:psi-gradient-mixture}
\nabla\psi(y)\in\conv K,
\qquad
\norm{\nabla\psi(y)}\le\rho_\eta,
\end{equation}
and
\begin{equation}\label{eq:Ueta-semiconcavity-mixture}
D^2U_\eta(y)=\Id-\Cov_{\eta_y}(Z)\preceq\Id.
\end{equation}
For a nonempty compact set \(K'\subset\R^d\) and \(v\in\R^d\), we write
\begin{equation}\label{eq:directional-width-mixture}
w_{K'}(v):=\sup_{z,z'\in K'}\ip{z-z'}{v}.
\end{equation}
Set \(b=-\psi\) in the Gaussian perturbation representation of the target. We then have
\begin{equation}\label{eq:mixture-target-perturbation-data}
\Lip(b)\le\rho_\eta,\qquad
\cK_b\subseteq-\conv K,\qquad
\omega_b(v)\le w_K(v)\quad(v\in\R^d).
\end{equation}
Notice that the source and target densities are smooth and strictly positive on \(\R^d\).
By \Cref{prop:smooth-diffeo-general}, the map \(T\) is a smooth diffeomorphism.

\begin{proposition}[Inverse contraction]\label{prop:inverse-contraction-mixture}
For every \(x\in\R^d\), it holds that
\begin{equation}\label{eq:A-lower-mixture}
D^2\Phi(x)\succeq\Id.
\end{equation}
In particular,
\begin{equation}\label{eq:h-S-B-mixture}
h(x):=\Phi(x)-\frac12\norm{x}^2,
\qquad
S(x):=\nabla h(x)=T(x)-x,
\qquad
B(x):=D^2h(x)=D^2\Phi(x)-\Id
\end{equation}
define, respectively, a smooth convex function, its gradient, and a positive-semidefinite matrix field.
\end{proposition}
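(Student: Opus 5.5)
The plan is to obtain the lower bound by applying Caffarelli's contraction to the \emph{reverse} transport and then transferring it with \Cref{lem:forward-reverse-transfer}. The reverse Brenier map $U=\nabla\Psi$ sends $\nu=\gamma_d*\eta$ to $\gamma_d$. By \eqref{eq:Ueta-semiconcavity-mixture}, the source potential satisfies
\[
D^2U_\eta=\Id-\Cov_{\eta_y}(Z)\preceq\Id .
\]
The target potential $\frac12\norm{y}^2$ satisfies $D^2\succeq\Id$. These are exactly the hypotheses of \Cref{cor:affine-sharpness} with $V=U_\eta$, $W=\frac12\norm{\cdot}^2$, $Q=P=\Id$ and $a=b=0$. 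For these matrices $G=\Id$, so the corollary gives $0\preceq D^2\Psi\preceq\Id$. By \Cref{thm:spectral-caffarelli}, $\Psi$ may be taken in $C^{1,1}(\R^d)$.

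The standing hypotheses of that theorem need to be checked. The potential $U_\eta$ is finite and smooth, since $\psi$ is a finite log-Laplace transform of a compactly supported measure. Both measures are probability measures. They have finite second moments because $\eta$ is compactly supported. The normalising constants are positive and finite.

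Next I need the forward potential $\Phi$ to be $C^{1,1}$ with some upper Hessian bound, as \Cref{lem:forward-reverse-transfer} requires. This follows from the Gaussian perturbation representation $\nu=Z^{-1}\e^{-b}\gamma_d$ with $b=-\psi$. By \eqref{eq:mixture-target-perturbation-data}, $b$ is globally $\rho_\eta$-Lipschitz, so \Cref{thm:gaussian-source} gives $\Phi\in C^{1,1}(\R^d)$ and
\[
0\preceq D^2\Phi\preceq\cC(0,\rho_\eta)\Id .
\]
Any finite upper bound would suffice here.

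Applying \Cref{lem:forward-reverse-transfer} with $\mu_a=\gamma_d$, $\nu_b=\nu$, $M_+=\cC(0,\rho_\eta)\Id$ and $M_-=\Id$ gives $D^2\Phi\succeq M_-^{-1}=\Id$ in the sense of distributions and almost everywhere. The lemma's assumption that both marginal densities are strictly positive holds. Since $T=\nabla\Phi$ is a smooth diffeomorphism by \Cref{prop:smooth-diffeo-general}, $D^2\Phi$ is continuous. An almost-everywhere Loewner inequality between continuous matrix fields therefore holds at every point, which gives \eqref{eq:A-lower-mixture}.

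The remaining assertions follow immediately. The function $h=\Phi-\frac12\norm{\cdot}^2$ is smooth, and $D^2h=D^2\Phi-\Id\succeq0$, so $h$ is convex. Its gradient is $S=T-\Id$, and its Hessian $B$ is positive semidefinite.

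The only real point of care is identifying the correct potential representative of the reverse transport. The Brenier potential of $U$ in \Cref{lem:forward-reverse-transfer} is $\Phi^*$ up to an additive constant, and the lemma handles this. One must also note that no perturbation term appears in the reverse problem: the semiconcavity $D^2U_\eta\preceq\Id$ is exact, which is why the sharp constant $1$, rather than $\cC(\cdot,\cdot)$, is obtained.
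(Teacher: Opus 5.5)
Your proof is correct and uses the same idea as the paper: Caffarelli's contraction for the reverse map $\nu\to\gamma_d$ (via $D^2U_\eta\preceq\Id$), followed by inversion. The only difference is the inversion step. The paper inverts pointwise with $D\widehat T(T(x))=(D^2\Phi(x))^{-1}$, using the smooth diffeomorphism from \Cref{prop:smooth-diffeo-general}, whereas you invert through \Cref{lem:forward-reverse-transfer}, which forces you to import the forward $C^{1,1}$ bound from \Cref{thm:gaussian-source}; this is valid and non-circular, but not actually needed.
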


\begin{proof}
Let \(\widehat T:=T^{-1}=\nabla\Phi^*\) denote the inverse Brenier map from
\(\nu\) to \(\gamma_d\). Its source potential is \(U_\eta\), and its target potential is \(\norm{\cdot}^2/2\). By \eqref{eq:Ueta-semiconcavity-mixture}, the source Hessian is bounded above by \(\Id\), while the target Hessian equals \(\Id\). The general Hessian comparison (\textit{cf}. \Cref{sec:introduction}) yields \(\Lip(\widehat T)\le1\). Since
\(D\widehat T\) is symmetric and positive definite, it follows that
\[
0\prec D\widehat T(y)\preceq\Id.
\]
Moreover, at \(y=T(x)\), the inverse-function identity gives \(D\widehat T(T(x))=(D^2\Phi(x))^{-1}\). This establishes \eqref{eq:A-lower-mixture}.
\end{proof}

\begin{corollary}[Spectral support-width bound for mixtures]
\label{cor:mixture-spectral-bound}
Under the centred normalisation, we fix \(\Sigma\in\mathrm{Sym}_d^{++}\) and
\(0<\beta<1\) such that
\begin{equation}\label{eq:mixture-spectral-admissibility}
\Sigma-\beta^2\Sigma^{-1}\succ0.
\end{equation}
Define
\begin{equation}\label{eq:mixture-spectral-action}
\cA_K(\Sigma,\beta)
:=\int_0^\infty\sup_{u\in\mathbb S^{d-1}}
\left(
w_K(\Sigma^{1/2}u)
-\frac t2\ip{(\Sigma-\beta^2\Sigma^{-1})u}{u}
\right)_+\dd t.
\end{equation}
It follows that
\begin{equation}\label{eq:mixture-spectral-upper}
D^2\Phi
\preceq
\frac{\e^{\cA_K(\Sigma,\beta)}}{\beta}\Sigma.
\end{equation}
\end{corollary}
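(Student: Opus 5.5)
Corollary~\ref{cor:mixture-spectral-bound} follows from Proposition~\ref{prop:smooth-spectral-bound} applied directly in balanced coordinates. We choose \(V(x)=W(x)=\frac12\norm{x}^2\), \(a=0\), and \(b=-\psi\) from \eqref{eq:psi-mixture}. The target potential is then \(U_\eta=W+b\), up to the normalising constant. Since \(Q=P=\Id\), we have \(G=\Id\) and \(\mathsf R=\Id\), so the balanced coordinates coincide with the original ones and \eqref{eq:balanced-standing} holds with equality. The data are smooth: \(\psi\) is real-analytic because \(K\) is compact, and \(b\) is globally \(\rho_\eta\)-Lipschitz by \eqref{eq:psi-gradient-mixture}. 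Both marginals have finite second moments, since \(\nu=\gamma_d*\eta\) with \(\eta\) compactly supported. Hence Proposition~\ref{prop:smooth-spectral-bound} applies to the smooth data with no approximation step, and Proposition~\ref{prop:smooth-diffeo-general} already provides the smooth diffeomorphism used in its proof.

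Next we match the parameters. We take \(S=\Sigma\). With \(\mathsf R=\Id\), the balanced admissibility condition \eqref{eq:balanced-admissibility} reads \(\Sigma-\beta^2\Sigma^{-1}\succ0\), which is exactly \eqref{eq:mixture-spectral-admissibility}. We also obtain \(\alpha_S(u)=\ip{\Sigma u}{u}\), \(\gamma_S(u)=\ip{\Sigma^{-1}u}{u}\), and \(d_{\beta,S}(u)=\ip{(\Sigma-\beta^2\Sigma^{-1})u}{u}\). Because \(a=0\), its gradient width vanishes, \(\omega_a\equiv0\), and so \(n_{\beta,S}(u)=\omega_b(\Sigma^{1/2}u)\). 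By \eqref{eq:mixture-target-perturbation-data}, \(\omega_b(v)\le w_K(v)\). This gradient-width bound is the substantive input. It holds because \(\nabla b=-\nabla\psi\) takes values in \(-\conv K\), so \(\cK_b-\cK_b\subseteq\conv K-\conv K\), and the support function of the latter set equals \(w_K\). The integrand in \eqref{eq:balanced-spectral-action} is monotone in \(n_{\beta,S}\), since \(s\mapsto s_+\) is nondecreasing and the supremum preserves order. Therefore \(\cA_{\mathsf R}(\Sigma,\beta,0,b)\le\cA_K(\Sigma,\beta)\).

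Inserting this bound into \eqref{eq:smooth-spectral-bound} gives \(D^2\Phi\preceq\beta^{-1}\e^{\cA_K(\Sigma,\beta)}\Sigma\), which is \eqref{eq:mixture-spectral-upper}. The only points that need checking are that the centring of Lemma~\ref{lem:mixture-centering} leaves \(D^2\Phi\) and the widths \(w_K\) unchanged, and that the global Lipschitz bound on \(b\) holds. Both are immediate: \(w_K\) is translation invariant, and the Lipschitz bound is \eqref{eq:psi-gradient-mixture}. I expect no real obstacle. The one step that needs care is the identification of \(\omega_b\) with a quantity bounded by \(w_K\), including the sign flip \(\cK_b\subseteq-\conv K\). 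That flip is harmless because the difference body is symmetric.
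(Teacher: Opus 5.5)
Your proof is correct and is essentially the paper's argument. The paper applies \Cref{thm:spectral-caffarelli} with $Q=P=\Id$, $a=0$, $H=\beta^{-1}\Sigma$ and rescales $s=t/\sqrt\beta$; that is exactly the theorem's internal reduction to \Cref{prop:smooth-spectral-bound} with $S=\beta H=\Sigma$, which you invoke directly (legitimately, since $V,W,a$ and $b=-\psi$ are already smooth), so you skip the rescaling and approximation step while using the same key input $\omega_b\le w_K$.
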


\begin{proof}
We apply \Cref{thm:spectral-caffarelli} with \(Q=P=\Id\), \(a=0\), and
\(H=\beta^{-1}\Sigma\). It follows that \(G=\mathsf R=\Id\), and the condition
\(H\mathsf RH\succ\mathsf R\) is precisely \eqref{eq:mixture-spectral-admissibility}. Moreover,
\[
\mathsf D_H=\beta^{-1}(\Sigma-\beta^2\Sigma^{-1}),
\qquad
\omega_b(H^{1/2}u)
\le\beta^{-1/2}w_K(\Sigma^{1/2}u),
\]
by \eqref{eq:mixture-target-perturbation-data}. We make the change of
variables \(s=t/\sqrt\beta\) in the reduced spectral action, and obtain
\(\cA_{Q,P}^{\mathrm{red}}(H,0,b)\le\cA_K(\Sigma,\beta)\). The estimate now follows.
\end{proof}

\begin{proposition}[Compact displacement range]
\label{prop:compact-displacement}
Under \eqref{eq:centered-mixture}, we have
\begin{equation}\label{eq:S-range-mixture}
S(\R^d)\subseteq\conv K.
\end{equation}
Without the centring assumption, this gives
\begin{equation}\label{eq:T-minus-Id-range}
T(x)-x\in\conv K
\qquad(x\in\R^d).
\end{equation}
\end{proposition}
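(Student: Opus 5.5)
The most direct route is to read \eqref{eq:S-range-mixture} off \Cref{thm:gaussian-displacement}. I take \(a=0\) and \(b=-\psi\), so that \(\gamma_a=\gamma_d\) and \(\gamma_b=\nu\). This is legitimate because \(q_\eta=\varphi_d\e^{\psi}\) by \eqref{eq:mixture-relative-density}, and \(b\) is globally Lipschitz with \(\Lip(b)\le\rho_\eta\) by \eqref{eq:psi-gradient-mixture}. Since \(a=0\), we have \(\cK_a=\{0\}\). By \eqref{eq:mixture-target-perturbation-data}, \(\cK_b\subseteq-\conv K\), so
\[
\cK_a-\cK_b=-\cK_b\subseteq\conv K .
\]
\Cref{thm:gaussian-displacement} then gives \(T(x)-x\in\conv K\) for every \(x\in\R^d\).

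The theorem concerns the globally continuous Brenier representative. Here \(T\) is a smooth diffeomorphism, so it is that representative and the inclusion holds pointwise. I would spell out the inclusion \(\cK_b\subseteq-\conv K\) once. Since \(\psi\) is smooth, \(\nabla b=-\nabla\psi\) takes values in \(-\conv K\). Because \(K\) is compact, \(\conv K\) is compact and hence closed, so the closed convex hull of the essential range stays inside \(-\conv K\).

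For the uncentred statement \eqref{eq:T-minus-Id-range}, I would not rely on centring at all. The argument above never uses \(\int z\,\eta(\dd z)=0\), so it already applies verbatim to an arbitrary compactly supported \(\eta\). Alternatively, one can pass through \Cref{lem:mixture-centering}. There \(\bar T=T-m_\eta\) and \(\bar K=K-m_\eta\), so \(\bar T(x)-x\in\conv\bar K=\conv K-m_\eta\), which again gives \(T(x)-x\in\conv K\).

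I expect no genuine obstacle. The only point needing care is checking that the hypotheses of \Cref{thm:gaussian-displacement} hold for \(b=-\psi\), namely global Lipschitz continuity and finiteness of the normalising constant. Both are immediate: \(\nabla\psi\) is bounded by \(\rho_\eta\), and \(Z_b=\int\e^{\psi}\dd\gamma_d=1\) because \(\nu\) is a probability measure.

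If a self-contained argument were preferred, I would instead repeat the maximum-principle proof of \Cref{thm:gaussian-displacement} directly. One applies \(\cL S_e=S_e-\ip{\nabla\psi(T)}{e}\) to \(S_e-\varepsilon\Phi\) and concludes \(\ip{S(x_0)}{e}\le h_{\conv K}(e)\) for every unit vector \(e\). The support-function characterisation of the closed convex set \(\conv K\) then gives the inclusion.
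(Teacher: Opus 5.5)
Your proposal is correct and follows the paper's proof. Like the paper, you apply \Cref{thm:gaussian-displacement} with $a=0$ and $b=-\psi$, use $\cK_b\subseteq-\conv K$ from \eqref{eq:mixture-target-perturbation-data}, and pass to the uncentred case via \Cref{lem:mixture-centering}. Your remark that centring is not actually needed is also right, provided you replace $\Lip(b)\le\rho_\eta$ by $\Lip(b)\le\sup_{z\in K}\norm{z}$, since the theorem only uses global Lipschitz continuity of $b$.
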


\begin{proof}
For the centred assertion, we apply \Cref{thm:gaussian-displacement} with
\(a=0\) and use \eqref{eq:mixture-target-perturbation-data} to obtain
\[
T(x)-x\in-\cK_b\subseteq\conv K.
\]
For the original mixing law, \Cref{lem:mixture-centering} gives \(\bar T=T-m_\eta\) and \(\bar K=K-m_\eta\). We conclude that
\[
T(x)-x\in\conv(K-m_\eta)+m_\eta=\conv K.
\]
\end{proof}

\subsection{Penalised Bregman estimates}
\label{subsec:mixture-penalised-bregman-estimates}

\begin{lemma}[Mixture Bregman estimate]\label{lem:mixture-Bregman}
Recall the Bregman-remainder notation \(D_f\) from
\eqref{eq:Bregman-remainder-notation}. For every \(y,v\in\R^d\), it holds that
\begin{equation}\label{eq:mixture-Bregman-lower}
D_{U_\eta}(y+v,y)
\ge \frac12\norm{v}^2-R\norm{v}.
\end{equation}
\end{lemma}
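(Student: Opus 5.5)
We need \(D_{U_\eta}(y+v,y)\ge \frac12|v|^2-R|v|\) with \(U_\eta=\frac12|y|^2-\psi\). Since the quadratic part has Bregman remainder exactly \(\frac12|v|^2\), the plan is to write
\[
D_{U_\eta}(y+v,y)=\tfrac12\norm{v}^2-D_\psi(y+v,y),
\]
so that it suffices to prove \(D_\psi(y+v,y)\le R\norm v\).

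For this I will use the integral form of the remainder, as in the proof of \Cref{lem:width-Bregman}:
\[
D_\psi(y+v,y)=\int_0^1\ip{\nabla\psi(y+tv)-\nabla\psi(y)}{v}\dd t.
\]
By \eqref{eq:psi-derivatives-mixture} and \eqref{eq:psi-gradient-mixture}, both gradients lie in \(\conv K\). Hence their difference lies in \(\conv K-\conv K=\conv(K-K)\), and every element of that set has norm at most \(\diam K=R\).

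Applying Cauchy--Schwarz, the integrand is at most \(R\norm v\), and therefore \(D_\psi(y+v,y)\le R\norm v\). Equivalently, \(D_\psi(y+v,y)\le\omega_{\psi}(v)\le w_K(v)\le R\norm v\).

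Substituting this back into the first identity gives \eqref{eq:mixture-Bregman-lower}. The centring assumption is not needed here, since only differences of gradients enter. There is no real obstacle; the only point to check is that \(\psi\) is smooth, so the integral formula applies. This follows from differentiating under the integral sign, which is legitimate because \(K\) is compact.
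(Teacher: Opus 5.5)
Your proof is correct. It reaches the key inequality \(D_\psi(y+v,y)\le R\norm v\) by a slightly different route from the paper.

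The paper works at the single base point \(y\). It rewrites the remainder exactly as a centred log-moment generating function under the tilted law, \(D_\psi(y+v,y)=\log\mathbb E_{\eta_y}\exp\ip{v}{Z-\mathbb E_{\eta_y}Z}\) (see \eqref{eq:psi-centered-mgf}). It then bounds the exponent almost surely by the oscillation of \(\ip{v}{Z}\) over \(K\).

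You instead integrate the gradient increment along the segment and use only that \(\nabla\psi\) takes values in \(\conv K\), by \eqref{eq:psi-gradient-mixture}. This is exactly the mechanism of \Cref{lem:width-Bregman}(ii). Your argument therefore shows that the lemma is a formal consequence of that general bound together with \eqref{eq:mixture-target-perturbation-data}: \(D_\psi(y+v,y)\le\omega_{\psi}(v)=\omega_{-\psi}(v)\le w_K(v)\le R\norm v\).

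The paper's version avoids the line integral and keeps the exact cumulant structure of the remainder under \(\eta_y\). Yours needs nothing about \(\psi\) beyond smoothness and the range of its gradient. It therefore applies verbatim to any \(C^1\) perturbation whose gradients lie in a set of diameter \(R\). You are also right that centring plays no role in either argument, since only differences of gradients enter.
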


\begin{proof}
Equation \eqref{eq:Ueta-mixture} gives
\begin{equation}\label{eq:Ueta-versus-psi-Bregman}
D_{U_\eta}(y+v,y)=\frac12\norm{v}^2-D_\psi(y+v,y).
\end{equation}
The tilted law from \eqref{eq:tilted-mixture} gives
\begin{align}
D_\psi(y+v,y)
&=\log\mathbb E_{\eta_y}\e^{\ip{v}{Z}}
-\mathbb E_{\eta_y}\ip{v}{Z}\notag\\
&=\log\mathbb E_{\eta_y}
\exp\!\left(\ip{v}{Z-\mathbb E_{\eta_y}Z}\right).
\label{eq:psi-centered-mgf}
\end{align}
The scalar random variable \(\ip{v}{Z}\) has oscillation at most
\[
\sup_{z,z'\in K}\ip{v}{z-z'}\le R\norm{v}.
\]
It follows that
\[
\ip{v}{Z-\mathbb E_{\eta_y}Z}
\le R\norm{v}
\qquad \eta_y\text{-almost surely},
\]
and \eqref{eq:psi-centered-mgf} now gives \(D_\psi(y+v,y)\le R\norm{v}\). We insert this estimate into \eqref{eq:Ueta-versus-psi-Bregman}, and arrive at \eqref{eq:mixture-Bregman-lower}.
\end{proof}

For the remainder of this subsection, we will assume that \(R>0\). Let us set
\begin{equation}\label{eq:mixture-lambda-Lambda}
\lambda_R:=4R,
\qquad
\Lambda_R:=(1+\lambda_RR)\e^{\lambda_RR}
=(1+4R^2)\e^{4R^2}.
\end{equation}
We define
\begin{equation}\label{eq:mixture-rho-curve}
\mathscr Q(q):=\frac{q\e^{\lambda_Rq}}{\Lambda_R}
\qquad(0\le q\le R).
\end{equation}
The function \(\mathscr Q\) is strictly increasing, and
\begin{equation}\label{eq:mixture-r-star}
r_*:=\mathscr Q(R)=\frac{R}{1+\lambda_RR}.
\end{equation}
Let \(p_R:[0,\infty)\to[0,\infty)\) be defined by
\begin{equation}\label{eq:mixture-p-inverse}
p_R(r):=\mathscr Q^{-1}(r)
\qquad(0\le r\le r_*),
\end{equation}
and extend it to \(r\ge r_*\) by
\begin{equation}\label{eq:mixture-p-extension}
p_R(r):=R+(r-r_*).
\end{equation}
Finally, put
\begin{equation}\label{eq:mixture-theta-Theta}
\theta_R(r):=\int_0^r p_R(s)\dd s,
\qquad
\Theta_R(m):=\theta_R(\norm{m}).
\end{equation}

\begin{lemma}[Properties of the diameter penalty]
\label{lem:mixture-penalty-properties}
The function \(p_R\) is \(C^1\) and strictly increasing, with
\begin{equation}\label{eq:mixture-pprime-zero}
p_R(0)=0,
\qquad
p_R'(0)=\Lambda_R.
\end{equation}
The radial function \(\Theta_R\) belongs to \(C^2(\R^d)\), is strictly convex, and satisfies
\begin{equation}\label{eq:mixture-penalty-coercive}
\theta_R(r)-Rr\longrightarrow+\infty
\qquad(r\to\infty).
\end{equation}
Further assume that \(r>0\) and \(q:=p_R(r)\le R\). With
\begin{equation}\label{eq:mixture-tau-def}
\tau:=\frac qr,
\end{equation}
we have
\begin{equation}\label{eq:mixture-penalty-identities}
\tau=\Lambda_R\e^{-\lambda_Rq},
\qquad
p_R'(r)=\frac{\tau}{1+\lambda_Rq},
\qquad
\tau\ge1+\lambda_RR\ge1+\lambda_Rq>1.
\end{equation}
\end{lemma}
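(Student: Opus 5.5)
The plan is to verify the claims directly from the explicit formulas in \eqref{eq:mixture-rho-curve}--\eqref{eq:mixture-theta-Theta}, working on the two branches and then checking the junction at \(r_*\).

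\emph{The inverse branch.} First I would record that \(\mathscr Q'(q)=\Lambda_R^{-1}(1+\lambda_Rq)\e^{\lambda_Rq}>0\) on \([0,R]\). Hence \(\mathscr Q\) is a \(C^1\) (indeed smooth) increasing bijection from \([0,R]\) onto \([0,r_*]\), where \(\mathscr Q(0)=0\) and \(\mathscr Q(R)=R\e^{\lambda_RR}/\Lambda_R=R/(1+\lambda_RR)=r_*\). By the inverse function theorem, \(p_R=\mathscr Q^{-1}\) is \(C^1\) on \([0,r_*]\). Its derivative is \(p_R'(r)=1/\mathscr Q'(q)\) with \(q=p_R(r)\), which gives \(p_R(0)=0\) and \(p_R'(0)=1/\mathscr Q'(0)=\Lambda_R\).

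\emph{The identities.} For \(r>0\) with \(q=p_R(r)\le R\), we have \(r=\mathscr Q(q)=q\e^{\lambda_Rq}/\Lambda_R\). This gives \(\tau=q/r=\Lambda_R\e^{-\lambda_Rq}\) and
\[
p_R'(r)=\frac{\Lambda_R\e^{-\lambda_Rq}}{1+\lambda_Rq}=\frac{\tau}{1+\lambda_Rq}.
\]
Since \(q\le R\), we get \(\tau\ge\Lambda_R\e^{-\lambda_RR}=1+\lambda_RR\ge1+\lambda_Rq>1\), using \(R>0\) for the final strict inequality. This establishes \eqref{eq:mixture-penalty-identities}. In the boundary case \(r\ge r_*\) with \(q=R\), which forces \(r=r_*\), I would use the left derivative, which agrees with the right one as shown next.

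\emph{Junction and global properties.} At \(r_*\) both branches take the value \(R\). The left derivative is \(1/\mathscr Q'(R)=\Lambda_R\e^{-\lambda_RR}/(1+\lambda_RR)=1\), which matches the slope \(1\) of the affine extension \eqref{eq:mixture-p-extension}. Therefore \(p_R\in C^1([0,\infty))\), and since \(p_R'>0\) everywhere, \(p_R\) is strictly increasing.

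Next I would apply \Cref{lem:radial-penalty-calculus} with \(S=\Id\) and \(K=p_R'(0)=\Lambda_R>0\). Since \(p_R'>0\) on \((0,\infty)\), this gives \(\Theta_R\in C^2(\R^d)\) and strict convexity.

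For coercivity, when \(r\ge r_*\) I would write
\[
\theta_R(r)=\theta_R(r_*)+R(r-r_*)+\tfrac12(r-r_*)^2,
\]
so that \(\theta_R(r)-Rr=\theta_R(r_*)-Rr_*+\tfrac12(r-r_*)^2\to\infty\) as \(r\to\infty\).

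I expect no real obstacle. The only delicate point is the \(C^1\) matching at \(r_*\), and the choice \(\Lambda_R=(1+\lambda_RR)\e^{\lambda_RR}\) is made precisely so that the slope there equals one.
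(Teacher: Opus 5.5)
Your proposal is correct and follows essentially the same route as the paper. You invert \(\mathscr Q\) using \(\mathscr Q'>0\), read off \(p_R'(0)=\Lambda_R\) and the identities for \(\tau\) and \(p_R'(r)\) from \(r=\mathscr Q(q)\), use \(\mathscr Q'(R)=1\) for the \(C^1\) junction with the affine extension, invoke \Cref{lem:radial-penalty-calculus} with \(S=\Id\), and get coercivity from the quadratic growth of \(\theta_R\) on \([r_*,\infty)\). One small correction: the last strict inequality \(1+\lambda_Rq>1\) needs \(q>0\) as well as \(R>0\), and this is immediate from \(\mathscr Q(q)=r>0\).
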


\begin{proof}
We differentiate \eqref{eq:mixture-rho-curve} and obtain
\[
\mathscr Q'(q)
=\frac{\e^{\lambda_Rq}(1+\lambda_Rq)}{\Lambda_R}>0.
\]
At \(q=R\), \(\mathscr Q'(R)=1\) holds. The inverse branch \eqref{eq:mixture-p-inverse} joins the affine extension \eqref{eq:mixture-p-extension} with matching first derivative
\cite[Section 4.3 and Proposition 4.8]{Gwozdz}, so \(p_R\in C^1\). At the origin,
\[
p_R'(0)=\frac1{\mathscr Q'(0)}=\Lambda_R.
\]
Moreover, \(p_R(0)=0\) and \(p_R'>0\). We apply \Cref{lem:radial-penalty-calculus} with \(S=\Id\), so it follows that \(\Theta_R\in C^2(\R^d)\) and is strictly convex. For \(m\ne0\), we write \(r:=\norm m\) and \(e:=m/r\). We infer
\begin{equation}\label{eq:mixture-radial-Hessian}
D^2\Theta_R(m)
=p_R'(r)e\otimes e
+\frac{p_R(r)}r(\Id-e\otimes e),
\end{equation}
and the affine extension of \(p_R\) gives quadratic growth of \(\theta_R\). This
proves \eqref{eq:mixture-penalty-coercive}.

Now suppose that \(q=p_R(r)\le R\). It then holds that \(r=\mathscr Q(q)\), so
\[
\tau=\frac qr=\Lambda_R\e^{-\lambda_Rq}.
\]
Differentiating the inverse gives
\[
p_R'(r)=\frac1{\mathscr Q'(q)}
=\frac{\Lambda_R\e^{-\lambda_Rq}}{1+\lambda_Rq}
=\frac{\tau}{1+\lambda_Rq}.
\]
Since \(q\le R\),
\[
\tau\ge\Lambda_R\e^{-\lambda_RR}
=1+\lambda_RR\ge1+\lambda_Rq.
\]
This completes the proof.
\end{proof}

Instead of the compact target range we previously used in \cite[Section 8.1]{Gwozdz}, we now consider the compact displacement range from \Cref{prop:compact-displacement}, together with the above penalty. Under the centred normalisation \eqref{eq:centered-mixture}, we define
\begin{equation}\label{eq:mixture-F-def}
F_m(x)
:=h(x+m)-h(x)-\ip{m}{S(x)}.
\end{equation}
Since \(h\) is convex,
\begin{equation}\label{eq:mixture-F-lower}
F_m(x)\ge0.
\end{equation}
Moreover, \Cref{prop:compact-displacement} and
\(\diam(\conv K)=\diam K=R\) give
\begin{align}
F_m(x)
&=\int_0^1\ip{S(x+sm)-S(x)}{m}\dd s\notag\\
&\le R\norm{m}.
\label{eq:mixture-F-upper}
\end{align}

For fixed \(x,m\), we set
\begin{equation}\label{eq:mixture-corrector-matrices}
A:=D^2\Phi(x),
\qquad
C:=D^2\Phi(x+m),
\qquad
\mathcal R:=A^{-1/2}CA^{-1/2}.
\end{equation}
For \(f\in C^2(\R^d)\), let us also introduce the linearised operator
\begin{equation}\label{eq:mixture-linearized-operator}
(\mathcal L f)(z)
:=\tr\!\left((D^2\Phi(z))^{-1}D^2f(z)\right)
-\ip{\nabla U_\eta(T(z))}{\nabla f(z)}
\qquad(z\in\R^d).
\end{equation}

Since \(F_m=\mathfrak B_m-\norm{m}^2/2\), the translated identity
\eqref{eq:LG-exact-general}, which we apply with the Gaussian source potential
\(\norm{x}^2/2\), gives, for every \(x,m\in\R^d\),
\begin{equation}\label{eq:mixture-translated-identity}
\mathcal L F_m
=\cH(\mathcal R)-\frac12\norm{m}^2
+D_{U_\eta}(T(x+m),T(x)).
\end{equation}

\begin{proposition}[Global comparison]
\label{prop:mixture-F-Theta}
For all \(x,m\in\R^d\), it holds that
\begin{equation}\label{eq:mixture-F-versus-Theta}
F_m(x)\le\Theta_R(m).
\end{equation}
\end{proposition}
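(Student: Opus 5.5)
The plan is to adapt the joint maximum-principle argument of \Cref{prop:smooth-spectral-bound} to the shifted corrector \(F_m\). I will apply \Cref{lem:quadratic-baseline-calculus} with the quadratic baseline \(\Gamma=\Id\), so that \(F^{\Id}_m=F_m\), and with the penalty \(\Theta=\Theta_R\). The coercivity of the maximisation no longer comes from \Cref{lem:global-modulus}. Instead it comes from the compact displacement bound \eqref{eq:mixture-F-upper}, namely \(F_m\le R\norm m\), together with \eqref{eq:mixture-penalty-coercive}.

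\textbf{Step 1 (existence and localisation of a maximum).} Normalise \(\Phi\ge0\), which is possible because \(\Phi\) is supercoercive by \Cref{lem:supercoercive}. Suppose \(F_{m_0}(x_0)-\Theta_R(m_0)=\eta>0\), and maximise
\[
J_\varepsilon(x,m)=F_m(x)-\Theta_R(m)-\varepsilon\Phi(x).
\]
By \eqref{eq:mixture-F-upper} and \eqref{eq:mixture-penalty-coercive}, the \(m\)-part is bounded above and tends to \(-\infty\) as \(\norm m\to\infty\). Supercoercivity of \(\Phi\) controls \(x\). Hence a maximiser \((x_\varepsilon,m_\varepsilon)\) exists, and \(J_\varepsilon\ge\eta/2\) for small \(\varepsilon\). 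Exactly as before, this gives \(r_\varepsilon:=\norm{m_\varepsilon}\) in a compact subset of \((0,\infty)\), \(\varepsilon\norm{x_\varepsilon}\to0\), and \(\varepsilon T(x_\varepsilon)\to0\). The last limit uses \(\norm{T(x)-x}\le\rho_\eta\) from \Cref{prop:compact-displacement}.

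\textbf{Step 2 (identities at the maximum).} \Cref{lem:quadratic-baseline-calculus} applies because \(\Pi=\Id+D^2\Theta_R(m_\varepsilon)\succ0\). Write \(e=m/r\) and \(q=p_R(r)\). The lemma gives
\[
\delta=m+\nabla\Theta_R(m)=(r+q)e,
\qquad
Am=\delta-\varepsilon T(x).
\]
It also gives the compatibility condition and \(\cL F_m-\varepsilon\cL\Phi+\cS(A,C,\Pi)\le0\).

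The key observation is that \(\delta-m=S(x+m)-S(x)\in\conv K-\conv K\). Hence \(q\le R\), so the maximiser always lies in the inverse branch where the identities \eqref{eq:mixture-penalty-identities} hold.

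Into \eqref{eq:mixture-translated-identity} I insert \Cref{lem:mixture-Bregman} with \(v=\delta\), and bound \(\varepsilon\cL\Phi\le\varepsilon(d+\rho_\eta^2/4)\) as in \eqref{eq:LPhi-general-bound}. This yields
\[
\cH(\mathcal R)+\cS
+\tfrac12(r+q)^2-\tfrac12r^2-R(r+q)\le\bigO(\varepsilon).
\]
Next I apply \Cref{lem:matrix-coercivity} with \(P=\Pi\) and \(v=m\). Using \(\ip{\Pi m}{m}=r^2(1+p_R'(r))\) and \(\ip{Am}{m}=r(r+q)+o(1)\), the left-hand side is at least \(\Xi_+(y_\varepsilon)\), where
\[
y_\varepsilon\to\frac{1+\tau}{1+p_R'(r)},
\qquad \tau=q/r.
\]
Passing to the limit \(\varepsilon\to0\) along a subsequence, a contradiction follows once the scalar certificate
\[
\Xi_+\!\left(\frac{1+\tau}{1+\tau/(1+\lambda_Rq)}\right)+rq+\tfrac12q^2-R(r+q)>0
\qquad(0<q\le R,\ r=q/\tau)
\]
is established.

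\textbf{Step 3 (the scalar certificate; main obstacle).} The argument of \(\Xi_+\) exceeds \(1\) because \(\lambda_Rq>0\). Since \(\tau\ge1+\lambda_RR\) is large, it is roughly \(1+\lambda_Rq\tau/(1+\lambda_Rq+\tau)\). The negative part is at most \(R(r+q)\le Rq(1+1/\tau)\), and the terms \(rq+q^2/2\) are nonnegative.

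I would first try the bound \(\Xi_+(1+z)\ge z\), as in the proof of \Cref{lem:spectral-penalty-certificate}, and then check that
\[
z=\frac{\lambda_Rq\tau}{1+\tau+\lambda_Rq}\ \ge\ Rq\Bigl(1+\tfrac1\tau\Bigr),
\]
with \(\lambda_R=4R\) and \(\tau\ge1+4R^2\). This reduces to \(4\tau^2\ge(\tau+1)(1+\tau+4Rq)\), which is plausible since \(4Rq\le4R^2\le\tau-1\). It may require the sharper form \(\Xi_+(1+z)\ge 3z\) near \(1\) if the constants are tight. This is exactly where the choice \(\lambda_R=4R\) is calibrated, and I expect it to be the delicate point.

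\textbf{Step 4 (conclusion).} Once \eqref{eq:mixture-F-versus-Theta} holds, the second-order expansion at \(m=0\) gives \(D^2h\preceq p_R'(0)\Id=\Lambda_R\Id\), using \(D^2\Theta_R(0)=\Lambda_R\Id\) from \Cref{lem:radial-penalty-calculus}. Therefore \(D^2\Phi\preceq(1+\Lambda_R)\Id\), which is the second entry in \eqref{eq:def-Meta}.
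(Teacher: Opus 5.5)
Your proposal is correct and follows the paper's proof essentially step for step. It uses the same penalised joint maximum with baseline $\Gamma=\Id$, the bound $q\le R$ from the compact displacement range, the mixture Bregman estimate, the Schur coercivity lemma, and a scalar contradiction. Your final check closes without any sharper form of $\Xi_+$: since $4Rq\le\tau-1$ and $\tau\ge1$, one has $(\tau+1)(1+\tau+4Rq)\le2\tau(\tau+1)\le4\tau^2$, and strictness comes from $rq+q^2/2>0$ (the paper instead uses $1+\lambda_Rq+\tau\le2\tau$ to get $Y-1\ge2Rq>Rq(1+1/\tau)\ge E_R(r,q)$).
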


\begin{proof}
Suppose, for the sake of contradiction, that there exist \(x_0,m_0\in\R^d\) and \(\delta>0\) such that
\begin{equation}\label{eq:mixture-positive-gap}
F_{m_0}(x_0)-\Theta_R(m_0)=\delta.
\end{equation}
By \Cref{prop:inverse-contraction-mixture}, the function \(\Phi\) is strongly
convex. In particular, it is coercive and has a unique minimum. We normalise
\(\Phi\) so that \(\min\Phi=0\). For \(\varepsilon>0\), let us define
\begin{equation}\label{eq:mixture-J-def}
J_\varepsilon(x,m)
:=F_m(x)-\Theta_R(m)-\varepsilon\Phi(x).
\end{equation}
The bounds \eqref{eq:mixture-F-upper} and \eqref{eq:mixture-penalty-coercive}, with the coercivity of \(\Phi\), show that \(J_\varepsilon\) attains a global maximum at a pair
\((x_\varepsilon,m_\varepsilon)\). For all sufficiently small \(\varepsilon\), we have
\begin{equation}\label{eq:mixture-J-positive}
J_\varepsilon(x_\varepsilon,m_\varepsilon)\ge\frac\delta2>0.
\end{equation}
In particular, \(m_\varepsilon\ne0\). We now write
\begin{equation}\label{eq:mixture-r-e-q}
r_\varepsilon:=\norm{m_\varepsilon},
\qquad
e_\varepsilon:=\frac{m_\varepsilon}{r_\varepsilon},
\qquad
q_\varepsilon:=p_R(r_\varepsilon),
\end{equation}
and omit the subscript \(\varepsilon\) from
\(x_\varepsilon,m_\varepsilon,r_\varepsilon,e_\varepsilon,q_\varepsilon\) in
the the calculations.

From \eqref{eq:mixture-F-upper} and \eqref{eq:mixture-J-positive}, we have
\begin{equation}\label{eq:mixture-r-compact}
\frac\delta2
\le J_\varepsilon(x,m)
\le Rr-\theta_R(r).
\end{equation}
Notice that the radii \(r=r_\varepsilon\) remain in a compact subset of
\((0,\infty)\), and the same inequalities show that \(\varepsilon\Phi(x_\varepsilon)\) is uniformly bounded. Since \(\Phi\) is \(1\)-strongly convex by \Cref{prop:inverse-contraction-mixture}, we arrive at
\begin{equation}\label{eq:mixture-epsilon-x-zero}
\varepsilon\norm{x_\varepsilon}\longrightarrow0.
\end{equation}
Moreover, \(T(x)=x+S(x)\) and \(S(x)\in\conv K\subseteq\overline B_{\rho_\eta}(0)\). It follows that
\begin{equation}\label{eq:mixture-epsilon-T-zero}
\varepsilon\norm{T(x_\varepsilon)}\longrightarrow0.
\end{equation}

We apply \Cref{lem:quadratic-baseline-calculus} with \(\Gamma=\Id\),
\(\Theta=\Theta_R\), and \(F^\Id_m=F_m\). The stationarity condition in \(m\)
gives
\begin{equation}\label{eq:mixture-m-stationarity}
S(x+m)-S(x)=\nabla\Theta_R(m)=p_R(r)e=qe.
\end{equation}
Both values of \(S\) belong to \(\conv K\), so
\begin{equation}\label{eq:mixture-q-bound}
0<q\le R.
\end{equation}
We combine \eqref{eq:mixture-r-e-q} with \eqref{eq:mixture-m-stationarity} to obtain
\begin{equation}\label{eq:mixture-T-increment}
T(x+m)-T(x)=m+S(x+m)-S(x)=(r+q)e.
\end{equation}

The stationarity condition in \(x\) gives \(D^2\Phi(x)m=T(x+m)-T(x)-\varepsilon T(x)\). We then obtain
\begin{equation}\label{eq:mixture-Ae-relation}
D^2\Phi(x)e
=\left(1+\frac qr\right)e-\frac\varepsilon rT(x),
\end{equation}
so
\begin{equation}\label{eq:mixture-Aee-relation}
\ip{D^2\Phi(x)e}{e}
=1+\frac qr-\frac\varepsilon r\ip{T(x)}{e}.
\end{equation}

At the maximum, set
\begin{equation}\label{eq:mixture-A-C-M}
A:=D^2\Phi(x),
\qquad
C:=D^2\Phi(x+m),
\qquad
M:=\Id+D^2\Theta_R(m).
\end{equation}
The baseline calculus gives \(C\preceq M\), together with
\begin{equation}\label{eq:mixture-compatibility}
(C-A)z=0
\qquad(z\in\Ker(M-C))
\end{equation}
Let us set
\begin{equation}\label{eq:mixture-Schur-term}
\cS_\varepsilon^{\mathrm{mix}}
:=\cS(A,C,M)
=\tr\!\left[
A^{-1}(C-A)(M-C)^\dagger(C-A)
\right].
\end{equation}
In this case, \eqref{eq:baseline-L-Schur} becomes
\begin{equation}\label{eq:mixture-L-plus-S}
\mathcal LF_m+\cS_\varepsilon^{\mathrm{mix}}
\le\varepsilon\left(d-\ip{\nabla U_\eta(T(x))}{T(x)}\right).
\end{equation}
Finally, the identities \(\nabla U_\eta(T(x))=T(x)-\nabla\psi(T(x))\) and
\(\norm{\nabla\psi(T(x))}\le\rho_\eta\) give
\begin{equation}\label{eq:mixture-LPhi-bound}
d-\ip{\nabla U_\eta(T(x))}{T(x)}
\le d+\frac{\rho_\eta^2}{4}.
\end{equation}

We now combine \eqref{eq:mixture-translated-identity}, \eqref{eq:mixture-L-plus-S}, and \eqref{eq:mixture-LPhi-bound}, which yields
\begin{equation}\label{eq:mixture-H-S-before-Bregman}
\cH(\mathcal R)+\cS_\varepsilon^{\mathrm{mix}}
\le\frac{r^2}{2}
-D_{U_\eta}(T(x+m),T(x))
+\varepsilon\left(d+\frac{\rho_\eta^2}{4}\right).
\end{equation}
By \Cref{lem:mixture-Bregman} and \eqref{eq:mixture-T-increment},
\[
D_{U_\eta}(T(x+m),T(x))
\ge\frac{(r+q)^2}{2}-R(r+q).
\]
We define
\begin{equation}\label{eq:mixture-E-def}
E_R(r,q)
:=\frac{r^2}{2}-\frac{(r+q)^2}{2}+R(r+q)
=r(R-q)+q\left(R-\frac q2\right).
\end{equation}
By the above estimates, we have
\begin{equation}\label{eq:mixture-H-S-upper-E}
\cH(\mathcal R)+\cS_\varepsilon^{\mathrm{mix}}
\le E_R(r,q)
+\varepsilon\left(d+\frac{\rho_\eta^2}{4}\right).
\end{equation}

We now apply \Cref{lem:matrix-coercivity} to \(A,C,M\) and \(e\). The compatibility condition is exactly \eqref{eq:mixture-compatibility}, and equation \eqref{eq:mixture-radial-Hessian} gives
\begin{equation}\label{eq:mixture-Mee}
\ip{Me}{e}=1+p_R'(r).
\end{equation}
The lemma leads to
\begin{equation}\label{eq:mixture-coercivity-at-max}
y_\varepsilon:=\frac{\ip{Ae}{e}}{1+p_R'(r)},
\qquad
\cH(\mathcal R)+\cS_\varepsilon^{\mathrm{mix}}
\ge\Xi_+(y_\varepsilon).
\end{equation}
We combine \eqref{eq:mixture-H-S-upper-E} with \eqref{eq:mixture-coercivity-at-max}, and arrive at
\begin{equation}\label{eq:mixture-scalar-epsilon}
\Xi_+(y_\varepsilon)
\le E_R(r,q)
+\varepsilon\left(d+\frac{\rho_\eta^2}{4}\right).
\end{equation}

Let \(\varepsilon\downarrow0\) along an arbitrary sequence. By \eqref{eq:mixture-r-compact}, after passing to a subsequence, we may assume
\[
r_\varepsilon\longrightarrow r>0,
\qquad
q_\varepsilon=p_R(r_\varepsilon)\longrightarrow q=p_R(r)\in(0,R].
\]
Equations \eqref{eq:mixture-Aee-relation} and \eqref{eq:mixture-epsilon-T-zero} give
\begin{equation}\label{eq:mixture-y-limit}
y_\varepsilon\longrightarrow
Y:=\frac{1+q/r}{1+p_R'(r)}.
\end{equation}
Let us put \(\tau:=q/r\). By \Cref{lem:mixture-penalty-properties}, \(p_R'(r)=\tau/(1+\lambda_Rq)\), so
\begin{equation}\label{eq:mixture-Y-formula}
Y=\frac{1+\tau}{1+\tau/(1+\lambda_Rq)},
\qquad
Y-1=\frac{\lambda_Rq\tau}{1+\lambda_Rq+\tau}.
\end{equation}
Since \(\tau\ge1+\lambda_Rq\),
\begin{equation}\label{eq:mixture-Y-lower}
Y-1\ge\frac{\lambda_Rq}{2}=2Rq.
\end{equation}
In particular, \(Y>1\) is true. Moreover,
\[
\Xi_+'(s)=\left(1+\frac1s\right)^2\ge1
\qquad(s>1),
\qquad
\Xi_+(1)=0,
\]
and
\begin{equation}\label{eq:mixture-Xi-lower}
\Xi_+(Y)\ge Y-1\ge2Rq.
\end{equation}
On the other hand, \(r=q/\tau\) and \(q\le R\), so
\begin{align}
E_R(r,q)
&\le Rr+Rq\notag\\
&=Rq\left(1+\frac1\tau\right)<2Rq,
\label{eq:mixture-E-upper}
\end{align}
because \(\tau>1\). Passing to the limit in \eqref{eq:mixture-scalar-epsilon} gives
\[
\Xi_+(Y)\le E_R(r,q),
\]
which contradicts \eqref{eq:mixture-Xi-lower} and \eqref{eq:mixture-E-upper}. This proves the proposition.
\end{proof}

We now combine the above estimates to finish the proof of the mixture
theorem.

\begin{proof}[Proof of \Cref{thm:mixtures}]
The displacement inclusion follows from \Cref{prop:compact-displacement},
and the lower Hessian estimate is \Cref{prop:inverse-contraction-mixture}.
Let us first consider \(R=0\). In this case, \(K\) is a singleton. The inclusion
\(T(x)-x\in\conv K\) then shows that \(T\) is the corresponding translation.

Assume now that \(R>0\). Using the centred normalisation, fix \(x\in\R^d\) and \(e\in\mathbb S^{d-1}\). For \(t>0\), \Cref{prop:mixture-F-Theta} gives
\begin{equation}\label{eq:mixture-Taylor-comparison}
F_{te}(x)\le\Theta_R(te)=\theta_R(t).
\end{equation}
Since \(h\in C^2\),
\[
F_{te}(x)
=\frac{t^2}{2}\ip{B(x)e}{e}+o(t^2).
\]
By \eqref{eq:mixture-pprime-zero},
\[
\theta_R(t)=\frac{\Lambda_Rt^2}{2}+o(t^2).
\]
We divide \eqref{eq:mixture-Taylor-comparison} by \(t^2/2\), and then let
\(t\downarrow0\). This gives
\[
\ip{B(x)e}{e}\le\Lambda_R.
\]
Since \(e\) is arbitrary and \(B\succeq0\),
\begin{equation}\label{eq:mixture-special-Hessian}
\Id\preceq D^2\Phi(x)
\preceq\bigl[1+(1+4R^2)\e^{4R^2}\bigr]\Id.
\end{equation}
The other upper bound in \eqref{eq:def-Meta} follows by combining \Cref{thm:gaussian-source} with \eqref{eq:mixture-target-perturbation-data}. Using
\eqref{eq:mixture-special-Hessian}, this gives \eqref{eq:mixture-Hessian-intro} under the centred normalisation. Note that translation of the target leaves \(D^2\Phi\) unchanged, so the same estimate holds without centring. Finally,
\[
w_{K-m_\eta}=w_K,
\]
and the spectral support-width estimate from \Cref{cor:mixture-spectral-bound} also holds for the original mixing law.
\end{proof}

\appendix

\section{Exact spectral Schur envelope}
\label{app:exact-schur-envelopes}

In \Cref{app:exact-schur-envelopes}, we determine the trace-level envelope used in the proof of \Cref{lem:matrix-coercivity}.

\begin{proposition}[Exact spectral Schur envelope]
\label{prop:spectral-schur-envelope}
Let \(A,P\in\mathrm{Sym}_d^{++}\). For \(0\prec C\prec P\), we define
\begin{align}
\mathcal F_{A,P}(C)
:={}&\tr(A^{-1}C)-d-\log\det(A^{-1}C)
\notag\\
&+\tr\!\left[
A^{-1}(C-A)(P-C)^{-1}(C-A)
\right].
\label{eq:matrix-Schur-functional}
\end{align}
It follows that
\begin{equation}\label{eq:exact-spectral-envelope}
\inf_{0\prec C\prec P}\mathcal F_{A,P}(C)
=\tr\,\Xi_+\!\left(P^{-1/2}AP^{-1/2}\right),
\end{equation}
where \(\Xi_+\) is applied by functional calculus.
\end{proposition}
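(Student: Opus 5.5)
The plan is to reduce to $P=\Id$, rewrite $\mathcal F_{A,\Id}$ in a variable where it separates, and then reduce to a scalar problem by a diagonal-pinching argument.

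\textbf{Step 1: reduction to $P=\Id$.} Set $\widetilde A:=P^{-1/2}AP^{-1/2}$ and $\widetilde C:=P^{-1/2}CP^{-1/2}$. The map $C\mapsto\widetilde C$ is a bijection of $\{0\prec C\prec P\}$ onto $\{0\prec\widetilde C\prec\Id\}$. Since $\widetilde A^{-1}\widetilde C$ is similar to $A^{-1}C$, the first three terms of \eqref{eq:matrix-Schur-functional} are unchanged. The Schur term is unchanged by the same cyclicity computation used in the proof of \Cref{lem:matrix-coercivity}, here with honest inverses. Hence $\mathcal F_{A,P}(C)=\mathcal F_{\widetilde A,\Id}(\widetilde C)$, and the right-hand side of \eqref{eq:exact-spectral-envelope} is $\tr\Xi_+(\widetilde A)$. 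From now on I assume $P=\Id$.

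\textbf{Step 2: an algebraic rewrite.} Put $X:=\Id-C$, so $0\prec X\prec\Id$, and write $C-A=(C-\Id)+(\Id-A)$. Expanding gives
\[
(C-A)X^{-1}(C-A)=-\Id-C+2A+(\Id-A)X^{-1}(\Id-A).
\]
Taking the $A^{-1}$-trace and adding $\tr(A^{-1}C)-d-\log\det(A^{-1}C)$, the terms $\pm\tr(A^{-1}C)$ cancel, and I expect
\[
\mathcal F_{A,\Id}(C)=d-\tr A^{-1}+\log\det A-\log\det(\Id-X)+\tr\!\bigl(NX^{-1}\bigr),
\]
where $N:=(\Id-A)A^{-1}(\Id-A)=A+A^{-1}-2\Id\succeq0$. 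Note that $N$ commutes with $A$.

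\textbf{Step 3: reduction to a scalar problem.} Work in an orthonormal eigenbasis of $A$, with eigenvalues $a_i$ and $n_i=(1-a_i)^2/a_i$, and let $x_i:=X_{ii}\in(0,1)$.

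For the lower bound I use two diagonal-pinching facts:
\begin{itemize}
\item Hadamard's inequality for $\Id-X\succ0$ gives $-\log\det(\Id-X)\ge-\sum_i\log(1-x_i)$.
\item For $X\succ0$ one has $(X^{-1})_{ii}\ge1/x_i$, which follows from Cauchy--Schwarz. Since $N$ is diagonal in this basis, $\tr(NX^{-1})\ge\sum_in_i/x_i$.
\end{itemize}
Together these give
\[
\mathcal F_{A,\Id}(C)\ge\sum_i g(a_i),\qquad g(a):=1-\frac1a+\log a+\inf_{0<x<1}\Bigl(-\log(1-x)+\frac{(1-a)^2}{a\,x}\Bigr).
\]
For the upper bound, diagonal $X$ gives equality in both facts. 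Choosing the $x_i$ as (near-)minimisers of the scalar problems then shows that the infimum equals $\sum_ig(a_i)$.

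\textbf{Step 4: the scalar identity $g=\Xi_+$.} This is the main computational obstacle.
\begin{itemize}
\item If $a=1$, then $n=0$ and the infimum is $0$, approached as $x\downarrow0$ but not attained. So $g(1)=0$.
\item Otherwise the map $x\mapsto-\log(1-x)+n/x$ is strictly convex on $(0,1)$ and tends to $+\infty$ at both ends. Its unique critical point solves $x^2/(1-x)=n$, that is,
\[
x_*=\frac{-n+\sqrt{n^2+4n}}{2}.
\]
With $n=(1-a)^2/a$ one gets $n^2+4n=(1-a)^2(1+a)^2/a^2$. Hence $x_*=|1-a|\,(1+a-|1-a|)/(2a)$, which equals $1-a$ for $a<1$ and $1-1/a$ for $a>1$.
\item Substituting $x_*$ should give $g(a)=0$ for $a\le1$, and $g(a)=a+2\log a-a^{-1}$ for $a>1$. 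I will verify this substitution carefully, since it is exactly where the form of $\Xi_+$ enters.
\end{itemize}
Summing over the eigenvalues $a_i$ of $\widetilde A$ then gives \eqref{eq:exact-spectral-envelope}.
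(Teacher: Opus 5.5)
Your proof is correct, and it takes a genuinely different route from the paper's.

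\textbf{What the paper does.} It uses the same reduction to $P=\Id$ and the same rewrite; your $N$ is the paper's $K=A+A^{-1}-2\Id$. It then minimises the matrix functional directly. Assuming $K\succ0$, it shows that $C\mapsto\tr(K(\Id-C)^{-1})-\log\det C$ is strictly convex and blows up at the boundary of $\{0\prec C\prec\Id\}$. It solves the Euler equation $K=C+C^{-1}-2\Id$, deduces that $C$ commutes with $K$, and identifies the minimiser $C=\chi(A)$ with $\chi(t)=\min\{t,t^{-1}\}$. Eigenvalues of $A$ equal to $1$, where $K$ is singular, are handled separately: it replaces $K$ by $K+\delta\Id$ and passes to the limit $\delta\downarrow0$.

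\textbf{What your route does instead.} Your pinching step replaces all of this. Hadamard's inequality and $(X^{-1})_{ii}\ge1/X_{ii}$, taken in the eigenbasis of $A$ where $N$ is diagonal, show that replacing $X$ by its diagonal part never increases the functional. As a result you need no existence of a minimiser, no matrix calculus, and no commutation step. The eigenvalue-$1$ case is absorbed into the scalar problem, where the infimum $0$ is simply not attained, so no $\delta$-regularisation is needed.

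\textbf{What each route buys.} The paper's argument additionally gives uniqueness of the minimiser when no eigenvalue of $A$ equals $1$. The proposition does not require this. Your diagonal choice $x_i=x_*(a_i)$ still recovers the same minimiser $C=\chi(A)$.

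\textbf{The deferred substitution.} It does check out. For $a<1$, with $x_*=1-a$, one gets $-\log(1-x_*)+n/x_*=-\log a+a^{-1}-1$, which cancels $1-a^{-1}+\log a$ exactly. For $a>1$, with $x_*=1-a^{-1}$, one gets $\log a+a-1$, so $g(a)=a+2\log a-a^{-1}$, as claimed.
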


\begin{proof}
We first apply congruence by \(P^{-1/2}\) and reduce the problem to \(P=\Id\). Let us also set
\[
K:=A+A^{-1}-2\Id\succeq0.
\]
Expansion and cyclicity of the trace give
\begin{equation}\label{eq:F-expanded}
\mathcal F_{A,\Id}(C)
=\tr\!\left(K(\Id-C)^{-1}\right)
-\log\det C
+d-\tr A^{-1}+\log\det A.
\end{equation}
Let us first assume that \(1\) is not an eigenvalue of \(A\), so \(K\succ0\). The \(C\)-dependent part of \eqref{eq:F-expanded} is strictly convex on \(0\prec C\prec\Id\). To verify this, set \(X:=\Id-C\). For a symmetric variation \(E\), the second variation is
\[
2\tr\!\left(KX^{-1}EX^{-1}EX^{-1}\right)
+\tr\!\left(C^{-1}EC^{-1}E\right),
\]
which is strictly positive whenever \(E\ne0\). Indeed, if
\[
F:=X^{-1/2}EX^{-1/2},
\qquad
B:=X^{-1/2}KX^{-1/2}\succ0,
\]
then the first trace equals \(2\tr(BF^2)\ge0\), while the second trace is strictly positive.

Observe that the functional tends to \(+\infty\) at both boundary components, and its critical point is the unique minimiser. The Euler equation is
\begin{equation}\label{eq:spectral-Euler}
(\Id-C)^{-1}K(\Id-C)^{-1}=C^{-1},
\end{equation}
and after multiplication on the left and right by \(\Id-C\), this becomes
\begin{equation}\label{eq:K-C-equation}
K=C+C^{-1}-2\Id.
\end{equation}
Equation \eqref{eq:K-C-equation} implies that \(C\) commutes with \(K\). It follows that \(C\) and \(K\) are simultaneously orthogonally diagonalisable. On each common eigenspace, the eigenvalues \(c\in(0,1)\) and \(k>0\) satisfy
\[
c+c^{-1}-2=k.
\]
For every \(k>0\), this scalar equation has a unique solution \(c=c(k)\in(0,1)\). In other terms,
\[
C=c(K)
\]
by functional calculus. Since
\[
K=A+A^{-1}-2\Id
\]
and
\[
c\!\left(t+t^{-1}-2\right)=\min\{t,t^{-1}\},
\qquad t>0,\quad t\ne1,
\]
the two identities give
\[
\chi(t):=\min\{t,t^{-1}\},
\qquad
C=\chi(A).
\]
This also covers the eigenspaces on which reciprocal eigenvalues of \(A\) give the same eigenvalue of \(K\), because the corresponding value of \(\chi\) is the same. We conclude that the scalar contribution of an eigenvalue \(t\) of \(A\) is zero for \(t<1\), and equals \(t+2\log t-t^{-1}\) for \(t>1\).

It remains to remove the assumption \(K\succ0\). For \(\delta>0\), let us set
\[
K_\delta:=K+\delta\Id
\]
and define
\[
\mathcal F_\delta(C)
:=
\tr\!\left(K_\delta(\Id-C)^{-1}\right)
-\log\det C
+d-\tr A^{-1}+\log\det A.
\]
It follows that \(\mathcal F_\delta(C)\downarrow\mathcal F_{A,\Id}(C)\) pointwise as
\(\delta\downarrow0\), and
\[
\inf_{0\prec C\prec\Id}\mathcal F_\delta(C)
\longrightarrow
\inf_{0\prec C\prec\Id}\mathcal F_{A,\Id}(C).
\]
The left-hand side is bounded below by the limiting infimum. For the reverse
limsup inequality, it suffices to evaluate at a fixed approximate minimiser of \(\mathcal F_{A,\Id}\).

For each \(\delta>0\), the previous computation also applies to \(K_\delta=K+\delta\Id\), and the unique minimiser is \(c(K_\delta)\). Since \(K\) is a continuous function of \(A\), spectral calculus gives \(c(K_\delta)\to\chi(A)\), where \(\chi(t)=\min\{t,t^{-1}\}\) and \(\chi(1)=1\) on the \(t=1\) eigenspace. Note that the scalar summand converges to zero
for \(t\le1\), and to \(t+2\log t-t^{-1}\) for \(t>1\). Finally, we sum over the spectrum and use the convergence of the infima. This proves \eqref{eq:exact-spectral-envelope}.
\end{proof}

\bibliographystyle{plain}
\bibliography{log}

\end{document}